\newtheorem{thm}{Theorem}[section]
\newtheorem{lem}{Lemma}[section]
\newtheorem{prop}{Proposition}[section]
\newtheorem{defn}{Definition}[section]{\bf}{\rm}
\newtheorem{assumpt}{Assumption}[section]{\bf}{\rm}
\newtheorem{rem}{Remark}[section]{\itshape}{\rmfamily}
\newenvironment{proof}{\noindent{\it Proof.~~}}{\qed\medskip}
\def\eqnarray{\stepcounter{equation}\let\@currentlabel=\theequation
\global\@eqnswtrue
\global\@eqcnt\z@\tabskip\@centering\let\\=\@eqncr
$$\halign to \displaywidth\bgroup\@eqnsel\hskip\@centering
  $\displaystyle\tabskip\z@{##}$&\global\@eqcnt\@ne 
  \hfil$\;{##}\;$\hfil
  &\global\@eqcnt\tw@ $\displaystyle\tabskip\z@{##}$\hfil 
   \tabskip\@centering&\llap{##}\tabskip\z@\cr}
    \renewcommand{\theequation}{%
    \thesection.\arabic{equation}}
\newcommand{\AAP}{{\it Adv.\ Appl.\ Prob.}}
\newcommand{\JAP}{{\it J.\ Appl.\ Prob.}}
\newcommand{\SPA}{{\it Stoch.\ Proc.\ Appl.}}
\newcommand{\dm}{\displaystyle}
\newcommand{\qed}{\hspace*{\fill}$\Box$}
\newcommand{\vc}{\bm}
\def\svc#1{\mbox{\boldmath $\scriptstyle #1$}}
\def\trunc#1{{}_{(n)}#1}
\newcommand{\sfBI}{\mathsf{BI}}
\newcommand{\sfBM}{\mathsf{BM}}
\newcommand{\EE}{\mathsf{E}}
\newcommand{\PP}{\mathsf{P}}
\newcommand{\calF}{\mathcal{F}}
\newcommand{\calG}{\mathcal{G}}
\newcommand{\calI}{\mathcal{I}}
\newcommand{\bbD}{\mathbb{D}}
\newcommand{\bbF}{\mathbb{F}}
\newcommand{\bbN}{\mathbb{N}}
\newcommand{\bbZ}{\mathbb{Z}}
\newcommand{\Mod}{\mathrm{mod}}
\newcommand{\rmT}{{\rm T}}
\newcommand{\dd}[1]{\if#11 1\!\!1 
\else {\if#1C I\!\!\!C
\else {\if#1G I\!\!\!G 
\else {\if#1J J\!\!\!J 
\else {\if#1S S\!\!\!S
\else {\if#1Z Z\!\!\!Z
\else {\if#1Q O\!\!\!\!Q
\else I\!\!#1
\fi} 
\fi}
\fi}
\fi} 
\fi} 
\fi} 
\fi} 
\begin{document}\thispagestyle{plain} 

\hfill
{\small Last update date: \today}

{\Large{\bf
\begin{center}
Error Bounds for Augmented Truncations of Discrete-Time Block-Monotone 
Markov Chains under Geometric Drift Conditions%
\footnote[1]{This paper has been accepted for publication in 
{\it Advances in Applied Probability}. }
%
%
\end{center}
}
}

\begin{center}
{
Hiroyuki Masuyama%
\footnote[2]{E-mail: masuyama@sys.i.kyoto-u.ac.jp}
}

\medskip

{\small
Department of Systems
Science, Graduate School of Informatics, Kyoto University\\
Kyoto 606-8501, Japan
}

\bigskip
\medskip

{\small
\textbf{Abstract}

\medskip

\begin{tabular}{p{0.85\textwidth}}
This paper studies the augmented truncation of discrete-time
block-monotone Markov chains under geometric drift conditions. We
first present a bound for the total variation distance between the
stationary distributions of an original Markov chain and its augmented
truncation. We also obtain such error bounds for more general cases
where an original Markov chain itself is not necessarily
block-monotone but is block-wise dominated by a block-monotone Markov
chain.  Finally we discuss the application of our results to
GI/G/1-type Markov chains.
\end{tabular}
}
\end{center}

\begin{center}
\begin{tabular}{p{0.90\textwidth}}
{\small
{\bf Keywords:} %
Augmented truncation; 
block-monotonicity; 
block-wise dominance; 
pathwise ordering; 
geometric drift condition; 
level-dependent QBD; 
M/G/1-type Markov chain; 
GI/M/1-type Markov chain; 
GI/G/1-type Markov chain
%
%

\medskip

{\bf Mathematics Subject Classification:} %
Primary 60J10; Secondary 60K25.
}
\end{tabular}

\end{center}

\section{Introduction}\label{introduction}

Various semi-Markovian queues and their state-dependent extensions can
be analyzed through block-structured Markov chains characterized by an
infinite number of block matrices, such as level-dependent
quasi-birth-and-death processes (LD-QBDs), M/G/1-, GI/M/1- and
GI/G/1-type Markov chains (see, e.g., \cite{He14-book}).

For LD-QBDs, there exist some numerical procedures based on the
$RG$-factorization, though their implementation requires the
truncation of the infinite sequence of block matrices in a heuristic
way \cite{Baum10,Brig95,Tuan10}. Such ``truncation in implementation"
is also necessary for {\it level-independent} M/G/1- and GI/M/1-type
Markov chains (see, e.g., Section 4 in \cite{Taki94-PE}) and thus for
GI/G/1-type ones. As far as we know, there is no study on the
computation of the stationary distributions of {\it level-dependent}
M/G/1- and GI/M/1-type Markov chains and more general ones. For these
Markov chains, the $RG$-factorization method does not seem effective
in developing numerical procedures with {\it good} properties, such as
space- and time-saving and guarantee of accuracy, because the
resulting expression of the stationary distribution is characterized
by an infinite number of $R$- and $G$-matrices \cite{Zhao98}. As for
the transient distribution, Masuyama and Takine~\cite{Masu05} propose
a stable and accuracy-guaranteed algorithm based on the uniformization
technique (see, e.g., \cite{Tijm03}).

As mentioned above, it is challenging to develop a numerical procedure
for computing the stationary distributions of block-structured Markov
chains characterized by an infinite number of block matrices.  A
practical and simple solution for this problem is to truncate the
transition probability matrix so that it is of a finite dimension. The
stationary distribution of the resulting finite Markov chain can be
computed by a general purpose algorithm, in principle. However, the
obtained stationary distribution includes error caused by truncating
the original transition probability matrix. Therefore from a practical
point of view, it is significant to estimate ``truncation error".

Tweedie \cite{Twee98} and Liu \cite{Liu10} study the estimation of
error caused by truncating (stochastically) monotone Markov chains
(see, e.g., \cite{Dale68}). Tweedie \cite{Twee98} presents error
bounds for the last-column-augmented truncation of a monotone Markov
chain with geometric ergodicity.  The last-column-augmented truncation
is constructed by augmenting the last column of the {\it northwest
  corner truncation} of a transition probability matrix so that the
resulting finite matrix is stochastic. On the other hand, Liu
\cite{Liu10} assumes that a monotone Markov chain is subgeometrically
ergodic and then derives error bounds for the last-column-augmented
truncation.

Unfortunately, block-structured Markov chains are not monotone in
general. Li and Zhao \cite{Li00} extend the notion of monotonicity to
block-structured Markov chains. The new notion is called
``(stochastic) block-monotonicity". Block-monotone Markov chains
(BMMCs) arise from queues in Markovian environments, such as queues
with batch Markovian arrival process (BMAP) \cite{Luca91}. Li and Zhao
\cite{Li00} prove that if an original Markov chain is block-monotone,
then the stationary distributions of its augmented truncations
converge to that of the original Markov chain, which motivates this
study.

In what follows, we give an overview of Li and Zhao~\cite{Li00}'s
work. To this end, we introduce some notations. Let $\bbN =
\{1,2,3,\dots\}$. Let $\bbZ_+^{\leqslant n} = \{0,1,\dots,n\}$ for $n
\in \bbN$ and $\bbZ_+^{\leqslant \infty} := \bbZ_+ =
\{0,1,2,\dots\}$. Further let $\bbF^{\leqslant n} = \bbZ_+^{\leqslant
  n} \times \bbD$ for $n \in \overline{\bbN}:=\bbN \cup \{\infty\}$,
where $\bbD = \{1,2,\dots,d\}$. For simplicity, we write $\bbF$ for
$\bbF^{\leqslant \infty}$.

The following is the definition of block monotonicity for stochastic
matrices.
\begin{defn}[Definition 2.5 in \cite{Li00}]\label{defn-BM}
For any $n \in \overline{\bbN}$, a stochastic matrix
$\vc{S}=(s(k,i;l,j))_{(k,i),(l,j)\in\bbF^{\leqslant n}}$ and a Markov
chain characterized by $\vc{S}$ are said to be (stochastically)
block-monotone with block size $d$ if for all $k \in \bbZ_+^{\leqslant
  n-1}$ and $l \in \bbZ_+^{\leqslant n}$,
\[
\sum_{m=l}^n s(k,i;m,j)
\le \sum_{m=l}^n s(k+1,i;m,j),\qquad i,j \in \bbD.
\]
We denote by $\sfBM_d$ the set of block-monotone stochastic matrices
with block size $d$.
\end{defn}

Let $\vc{P}=(p(k,i;l,j))_{(k,i),(l,j)\in\bbF}$ denote a stochastic
matrix.  Let $\{(X_{\nu},J_{\nu});\nu\in\bbZ_+\}$ denote a bivariate
Markov chain with state space $\bbF$ and transition probability matrix
$\vc{P}$. The following result is obvious from the definition. We thus
omit the proof.
\begin{prop}\label{prop-gamma(i,j)}
If $\vc{P} \in \sfBM_d$, then $\psi(i,j) :=
\sum_{l=0}^{\infty}p(k,i;l,j)$ ($i,j\in\bbD$) is constant with respect
to $k \in \bbZ_+$ and $\{J_{\nu};\nu\in\bbZ_+\}$ is a Markov chain
whose transition probability matrix is given by $\vc{\varPsi} :=
(\psi(i,j))_{i,j\in\bbD}$, i.e., $\psi(i,j) = \PP(J_{\nu+1} = j \mid
J_{\nu}=i)$ for $i, j\in\bbD$.
\end{prop}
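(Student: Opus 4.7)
The plan is to unpack the block-monotonicity inequality at its extreme value $l=0$ and combine it with the stochasticity of $\vc{P}$.

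First I would set $l=0$ in Definition~\ref{defn-BM}, which gives
\[
\psi_k(i,j) := \sum_{m=0}^{\infty} p(k,i;m,j) \le \sum_{m=0}^{\infty} p(k+1,i;m,j) =: \psi_{k+1}(i,j),
\qquad i,j \in \bbD,\ k \in \bbZ_+,
\]
so for each fixed $i,j$ the sequence $\{\psi_k(i,j)\}_{k\in\bbZ_+}$ is nondecreasing. Next I would use that $\vc{P}$ is stochastic to note that for every $k$ and $i$,
\[
\sum_{j \in \bbD} \psi_k(i,j) = \sum_{(m,j) \in \bbF} p(k,i;m,j) = 1.
\]
Combining these two facts: for any $k \in \bbZ_+$,
\[
0 = \sum_{j\in\bbD} \bigl[\psi_{k+1}(i,j) - \psi_k(i,j)\bigr],
\]
and every summand is nonnegative, so each must vanish. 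Hence $\psi_k(i,j)$ does not depend on $k$, and we may denote the common value by $\psi(i,j)$.

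It remains to derive the Markov property of $\{J_\nu\}$. Since $\{(X_\nu,J_\nu)\}$ is a Markov chain with transition matrix $\vc{P}$, for each $\nu\in\bbZ_+$ and any history $\calH_\nu=\sigma(X_0,J_0,\dots,X_\nu,J_\nu)$,
\[
\PP(J_{\nu+1}=j \mid \calH_\nu)
= \sum_{l=0}^{\infty} p(X_\nu,J_\nu;l,j)
= \psi_{X_\nu}(J_\nu,j) = \psi(J_\nu,j),
\]
by the first part of the proof. Since the right-hand side depends only on $J_\nu$, taking conditional expectations with respect to $\sigma(J_0,\dots,J_\nu)$ and invoking the tower property shows $\PP(J_{\nu+1}=j\mid J_0,\dots,J_\nu) = \psi(J_\nu,j)$, which is exactly the Markov property with transition matrix $\vc{\varPsi}=(\psi(i,j))_{i,j\in\bbD}$.

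The only mildly subtle point is the step that turns ``nondecreasing in $k$'' into ``constant in $k$''; everything else is direct manipulation of the definitions and the standard tower-property argument for the Markov property of a functional of a Markov chain.
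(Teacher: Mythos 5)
Your proof is correct and is precisely the argument the paper has in mind when it declares the result ``obvious from the definition'' and omits the proof: take $l=0$ in Definition~\ref{defn-BM} to get monotonicity of the row sums in $k$, use stochasticity to force the nondecreasing differences to vanish, and then read off the Markov property of $\{J_\nu\}$ via the tower property. No gaps.
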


Proposition~\ref{prop-gamma(i,j)} implies {\it the pathwise ordered
  property} of BMMCs (see Lemma~\ref{lem1-discrete-ordering}): If
$\vc{P} \in \sfBM_d$, then there exist two BMMCs
$\{(X'_{\nu},J'_{\nu});\nu\in\bbZ_+\}$ and
$\{(X''_{\nu},J''_{\nu});\nu\in\bbZ_+\}$ with transition probability
matrix $\vc{P}$ on a common probability $(\Omega, \calF, \PP)$ such
that $X'_{\nu}\le X''_{\nu}$ and $J'_{\nu} = J''_{\nu}$ for all $\nu
\in \bbN$ if $X_0'\le X_0''$ and $J_0' = J_0''$.

Let
$\trunc{\vc{P}}_{\ast}=(\trunc{p}_{\ast}(k,i;l,j))_{(k,i),(l,j)\in\bbF}$
($n \in \bbN$) denote a stochastic matrix such that for $i,j \in
\bbD$,
\begin{align*}
&&
\trunc{p}_{\ast}(k,i;l,j) &\ge p(k,i;l,j), 
& & k \in \bbZ_+,\ l \in \bbZ_+^{\leqslant n}, &&
\\
&&
\trunc{p}_{\ast}(k,i;l,j) &= 0, 
& & k \in \bbZ_+,\ l \in \bbZ_+\setminus\bbZ_+^{\leqslant n}, &&
\\
&&
\sum_{l=0}^n\trunc{p}_{\ast}(k,i;l,j) &= \sum_{l=0}^{\infty}p(k,i;l,j),
&& k \in \bbZ_+. &&
\end{align*}
The stochastic matrix $\trunc{\vc{P}}_{\ast}$ is called {\it a
  block-augmented first-$n$-block-column truncation} (for short,
block-augmented truncation) of $\vc{P}$. 

\begin{rem}
The block-augmented truncation
$\trunc{\vc{P}}_{\ast}$ can be partitioned as
\begin{equation}
\trunc{\vc{P}}_{\ast}
= \bordermatrix{
               	& \bbF^{\leqslant n} 	&   \bbF \setminus \bbF^{\leqslant n}    
\cr
\bbF^{\leqslant n} 	& \trunc{\vc{P}}_{\ast}^{\leqslant n}	& \vc{O}
\cr
\bbF \setminus \bbF^{\leqslant n} 		& \ast &	\vc{O} 
},
\label{add-eqn-22}
\end{equation}
where $\trunc{\vc{P}}_{\ast}^{\leqslant n}$ is equivalent to the
block-augmented truncation defined in Li and Zhao \cite{Li00}. Our
definition facilitates the algebraic operation for the original
stochastic matrix $\vc{P}$ and its block-augmented truncation
$\trunc{\vc{P}}_{\ast}$ because they are of the same dimension.
\end{rem}

Throughout this paper, unless otherwise stated, we assume that
$\vc{P}$ is irreducible and positive recurrent and then denote its
unique stationary probability vector by $\vc{\pi}=(\pi(k,i))_{(k,i)\in
  \bbF} > \vc{0}$ (see, e.g., Theorem~3.1 in Section~3.1 of
\cite{Brem99}). However, $\trunc{\vc{P}}_{\ast}$ may have more than
one positive recurrent (communication) class in $\bbF^{\leqslant n}$.

Let $\trunc{\vc{\pi}}_{\ast}=(\trunc{\pi}_{\ast}(k,i))_{(k,i)\in\bbF}$
($n \in \bbN$) denote a stationary probability vector of
$\trunc{\vc{P}}_{\ast}$.  Equation (\ref{add-eqn-22}) implies that
$\trunc{\pi}_{\ast}(k,i) = 0$ for all $(k,i)\in
\bbF\setminus\bbF^{\leqslant n}$ (see, e.g., Theorem~1 in Section~I.7
of \cite{Chun67}) and $\trunc{\vc{\pi}}_{\ast}^{\leqslant
  n}:=(\trunc{\pi}_{\ast}(k,i))_{(k,i)\in\bbF^{\leqslant n}}$ is a
solution of $\trunc{\vc{\pi}}_{\ast}^{\leqslant
  n}\trunc{\vc{P}}_{\ast}^{\leqslant
  n}=\trunc{\vc{\pi}}_{\ast}^{\leqslant n}$ and
$\trunc{\vc{\pi}}_{\ast}^{\leqslant n}\vc{e}=1$, where $\vc{e}$
denotes a column vector of ones with an appropriate dimension. It is
also known that if $\vc{P} \in \sfBM_d$, then
$\lim_{n\to\infty}\trunc{\vc{\pi}}_{\ast} = \vc{\pi}$, where the
convergence is element-wise (see Theorem~3.4 in Li and
Zhao~\cite{Li00}).

Let $\trunc{\vc{P}}_n=(\trunc{p}_n(k,i;l,j))_{(k,i),(l,j)\in\bbF}$ ($n
\in \bbN$) denote a block-augmented truncation of $\vc{P}$ such that
for $i,j\in\bbD$,
\begin{equation}
\trunc{p}_n(k,i;l,j)
= 
\left\{
\begin{array}{ll}
p(k,i;l,j), & k \in \bbZ_+,\ l \in \bbZ_+^{\leqslant n-1},
\\
\dm\sum_{m=n}^{\infty} p(k,i;m,j), & k \in \bbZ_+,\ l = n,
\\
0, & \mbox{otherwise},
\end{array}
\right.
\label{def-trunc{p}_n(k,i;l,j)}
\end{equation}
which is called {\it the last-column-block-augmented
  first-$n$-block-column truncation} (for short, the
last-column-block-augmented truncation).  Let
$\trunc{\vc{\pi}}_n=(\trunc{\pi}_n(k,i))_{(k,i)\in\bbF}$ ($n \in
\bbN$) denote a stationary probability vector of $\trunc{\vc{P}}_n$,
where $\trunc{\pi}_n(k,i) = 0$ for all $(k,i)\in
\bbF\setminus\bbF^{\leqslant n}$. We then have the following result.
\begin{prop}[Theorem~3.6 in \cite{Li00}]\label{prop-Li00}
If $\vc{P} \in \sfBM_d$ and $\trunc{\vc{\pi}}_n$ is the unique
stationary distribution of $\trunc{\vc{P}}_n$, then there exists an
infinite increasing sequence $\{n_k \in \bbN;k\in\bbZ_+\}$ such that
for all $k \in \bbZ_+$,
\[
0 \le
\sum_{l=0}^{n_k}\sum_{i\in\bbD}
\left( \trunc{\pi}_n(l,i) - \pi(l,i) \right)
\le 
\sum_{l=0}^{n_k}\sum_{i\in\bbD}
\left( \trunc{\pi}_{\ast}(l,i) - \pi(l,i) \right).
\]
\end{prop}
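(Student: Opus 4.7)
The plan is to sandwich $\trunc{\vc{P}}_n$ between $\trunc{\vc{P}}_{\ast}$ and $\vc{P}$ via two pathwise couplings and then pass to stationary distributions through Cesaro (ergodic) averaging. The skeleton relation I aim to establish is the row-wise stochastic dominance
\[
\trunc{\vc{P}}_{\ast} \preceq \trunc{\vc{P}}_n \preceq \vc{P},
\]
where $\preceq$ denotes dominance on the level component $X$ with the phase $J$ held common. Since $\vc{P}\in\sfBM_d$ and (as I will check) also $\trunc{\vc{P}}_n\in\sfBM_d$, a two-chain extension of the pathwise ordered property of BMMCs supplies the desired couplings.

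First I would verify that $\trunc{\vc{P}}_n$ is block-monotone by computing directly from its definition: for every $k\in\bbZ_+$ and $i,j\in\bbD$,
\[
\sum_{m=l}^{\infty}\trunc{p}_n(k,i;m,j)
=
\begin{cases}
\sum_{m=l}^{\infty}p(k,i;m,j), & l\le n,\\[2pt]
0, & l>n,
\end{cases}
\]
so the tail-sum monotonicity in $k$ is inherited from $\vc{P}$. The same identity immediately yields $\trunc{\vc{P}}_n\preceq\vc{P}$ row-wise, since the tail sums agree for $l\le n$ and the $\trunc{\vc{P}}_n$-side vanishes for $l>n$. For the other dominance $\trunc{\vc{P}}_{\ast}\preceq\trunc{\vc{P}}_n$ I would combine $\trunc{p}_{\ast}(k,i;m,j)\ge p(k,i;m,j)$ for $m\le n$ with the row-sum identity $\sum_{m=0}^{n}\trunc{p}_{\ast}(k,i;m,j)=\sum_{m=0}^{\infty}p(k,i;m,j)$ to obtain, for $l\le n$,
\[
\sum_{m=l}^{n}\trunc{p}_{\ast}(k,i;m,j)
= \sum_{m=0}^{\infty}p(k,i;m,j) - \sum_{m=0}^{l-1}\trunc{p}_{\ast}(k,i;m,j)
\le \sum_{m=l}^{\infty}p(k,i;m,j)
= \sum_{m=l}^{n}\trunc{p}_n(k,i;m,j),
\]
while both sides vanish for $l>n$.

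With the two dominances in hand I would invoke the coupling twice. For the right-hand side of the claim, couple a $\vc{P}$-chain started from $\vc{\pi}$ with a $\trunc{\vc{P}}_n$-chain from the same initial distribution, using block-monotonicity of $\vc{P}$; the coupling gives $X^{(n)}_{\nu}\le X_{\nu}$ and $J^{(n)}_{\nu}=J_{\nu}$ almost surely, so summing over $i$ yields $\PP(X^{(n)}_{\nu}\le m)\ge \sum_{l\le m}\sum_{i}\pi(l,i)$ for every $\nu$. Cesaro-averaging in $\nu$ and invoking uniqueness of $\trunc{\vc{\pi}}_n$ produces $\sum_{l\le m}\sum_{i}\trunc{\pi}_n(l,i)\ge \sum_{l\le m}\sum_{i}\pi(l,i)$ for every $m\in\bbZ_+$. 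The other inequality is obtained analogously by starting both coupled chains from the prescribed $\trunc{\vc{\pi}}_{\ast}$ (which is stationary under $\trunc{\vc{P}}_{\ast}$) and using block-monotonicity of $\trunc{\vc{P}}_n$ to dominate the (possibly non-block-monotone) $\trunc{\vc{P}}_{\ast}$; a second Cesaro average delivers $\sum_{l\le m}\sum_{i}\trunc{\pi}_{\ast}(l,i)\ge \sum_{l\le m}\sum_{i}\trunc{\pi}_n(l,i)$. Subtracting $\sum_{l\le m}\sum_{i}\pi(l,i)$ from each side yields the stated two-sided bound along any infinite increasing $\{n_k\}\subset\bbZ_+$.

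The main obstacle will be the asymmetric coupling behind $\trunc{\vc{P}}_{\ast}\preceq\trunc{\vc{P}}_n$: because $\trunc{\vc{P}}_{\ast}$ is not in general block-monotone, the pathwise ordering lemma of the excerpt (phrased for two copies of the same block-monotone matrix) does not apply directly, and the coupling must be built relying solely on the block-monotonicity of the dominant chain $\trunc{\vc{P}}_n$---a Kamae--Krengel--O'Brien style construction carried out in the block setting. A secondary delicacy is that $\trunc{\vc{\pi}}_{\ast}$ need not be unique, but since the argument starts $\trunc{\vc{P}}_{\ast}$ from the prescribed stationary vector this is harmless; the assumed uniqueness of $\trunc{\vc{\pi}}_n$ is what makes the Cesaro limit on the $\trunc{\vc{P}}_n$-side unambiguous.
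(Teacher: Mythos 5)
Your argument is sound, but be aware that the paper does not prove this proposition at all: it is imported verbatim as Theorem~3.6 of \cite{Li00}, so there is no internal proof to compare against. What you supply is a legitimate self-contained derivation from the paper's own toolkit. The two dominance relations you establish, $\trunc{\vc{P}}_n \prec_d \vc{P}$ and $\trunc{\vc{P}}_{\ast} \prec_d \trunc{\vc{P}}_n$, are correct (the tail-sum computations check out, as does $\trunc{\vc{P}}_n \in \sfBM_d$), and the ``Kamae--Krengel--O'Brien style construction'' you flag as the main obstacle is not an obstacle within this paper: it is exactly Lemma~\ref{lem2-discrete-ordering} in the appendix, which builds the ordered coupling from the block-monotonicity of the dominating matrix alone, together with the equality of the phase marginals guaranteed by Proposition~\ref{prop-4}~(a). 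Your Cesaro step is also the right move: since both truncated chains live on the finite set $\bbF^{\leqslant n}$ after one step and $\trunc{\vc{\pi}}_n$ is assumed unique, the time-averaged law converges to $\trunc{\vc{\pi}}_n$ from any initial distribution, which lets you bypass the irreducibility hypothesis that Proposition~\ref{prop-4}~(c) would otherwise impose on the dominating matrix $\trunc{\vc{P}}_n$. The net effect is that you prove the stronger assertion that both inequalities hold for \emph{every} cutoff $m \in \bbZ_+$, not merely along a subsequence $\{n_k\}$; this is consistent with the paper's own observation preceding (\ref{eqn-varpi(i)}) that $\trunc{\vc{\pi}}_n \prec_d \vc{\pi}$, and the subsequence phrasing is simply the weaker form inherited from \cite{Li00}.
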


\medskip

Based on Proposition~\ref{prop-Li00}, Li and Zhao~\cite{Li00} state
that the last-column-block-augmented truncation $\trunc{\vc{P}}_n$ is
the {\it best} approximation to $\vc{P}$ among the block-augmented
truncations of $\vc{P}$, though they do not estimate the distance
between $\trunc{\vc{\pi}}_n$ and $\vc{\pi}$.

In this paper, we consider some cases where $\vc{P}$ satisfies the
geometric drift condition (see Section 15.2.2
in \cite{Meyn09}) but may be periodic. We first assume $\vc{P} \in
\sfBM_d$ and present a bound for the total variation distance between
$\trunc{\vc{\pi}}_n$ and $\vc{\pi}$, which is expressed as follows:
\[
\left\| \trunc{\vc{\pi}}_n - \vc{\pi} \right\| 
:= \sum_{(k,i)\in\bbF} | \trunc{\pi}_n(k,i) - \pi(k,i) | \le C_m(n),
\]
where $C_m$ is some function on $\bbZ_+$ with a supplementary
parameter $m \in \bbN$ such that \break
$\lim_{m\to\infty}\lim_{n\to\infty}C_m(n) = 0$. The bound presented in
this paper is a generalization of that in Tweedie \cite{Twee98} (see
Theorem 4.2 therein).  We also obtain such error bounds for more
general cases where $\vc{P}$ itself is not necessarily block-monotone but is
block-wise dominated by a block-monotone stochastic matrix.

The rest of this paper is divided into four
sections. Section~\ref{sec-preliminaries} provides preliminary results
on block-monotone stochastic matrices. The main result of this paper
is presented in Section~\ref{sec-main-results}, and some extensions
are discussed in Section~\ref{sec-extension}. As an example, these
results are applied to GI/G/1-type Markov chains in
section~\ref{sec-applications}.

\section{Preliminaries}\label{sec-preliminaries}

In this section, we first introduce some definitions and notations,
and then provide some basic results on block-monotone stochastic
matrices.

\subsection{Definitions and notations}

Let $\vc{I}$ denote an identity matrix whose dimension depends on the
context (we may write $\vc{I}_m$ to represent the $m \times m$
identity matrix). For any square matrix $\vc{M}$, let $\vc{M}^0 =
\vc{I}$.  Let $\vc{T}_d$ and $\vc{T}_d^{-1}$ denote
\[
\vc{T}_d
= \left(
\begin{array}{ccccc}
\vc{I}_d & \vc{O} & \vc{O} & \vc{O} & \cdots
\\
\vc{I}_d & \vc{I}_d & \vc{O} & \vc{O} & \cdots
\\
\vc{I}_d & \vc{I}_d & \vc{I}_d & \vc{O} & \cdots
\\
\vc{I}_d & \vc{I}_d & \vc{I}_d & \vc{I}_d & \cdots
\\
\vdots      & \vdots      & \vdots      & \vdots      & \ddots
\end{array}
\right),~~
\vc{T}_d^{-1}
= \left(
\begin{array}{ccccc}
\vc{I}_d & \vc{O} & \vc{O} & \vc{O} & \cdots
\\
-\vc{I}_d & \vc{I}_d & \vc{O} & \vc{O} & \cdots
\\
\vc{O} & -\vc{I}_d & \vc{I}_d & \vc{O} & \cdots
\\
\vc{O} & \vc{O} & -\vc{I}_d & \vc{I}_d & \cdots
\\
\vdots      & \vdots      & \vdots      & \vdots      & \ddots
\end{array}
\right),
\]
where $\vc{T}_d\vc{T}_d^{-1} = \vc{T}_d^{-1}\vc{T}_d = \vc{I}$.  Let
$\vc{T}_d^{\leqslant n}$ ($n \in \overline{\bbN}$) denote the
$|\bbF^{\leqslant n}| \times |\bbF^{\leqslant n}|$ northwest corner
truncation of $\vc{T}_d$, where $|\,\cdot\,|$ denotes set
cardinality. Note that $\vc{T}_d=\vc{T}_d^{\leqslant
  \infty}$ and $(\vc{T}_d^{\leqslant n})^{-1}$ ($n \in
\overline{\bbN}$) is equal to the $|\bbF^{\leqslant n}| \times
|\bbF^{\leqslant n}|$ northwest corner truncation of $\vc{T}_d^{-1}$.

We now introduce the following definitions.
\begin{defn}[Definition 2.1 in \cite{Li00}]\label{defn-BI}
For $n \in \overline{\bbN}$, let
$\vc{f}=(f(k,i))_{(k,i)\in\bbF^{\leqslant n}}$ denote a column vector
with block size $d$.  The vector $\vc{f}$ is said to be
block-increasing if $(\vc{T}_d^{\leqslant n})^{-1}\vc{f} \ge \vc{0}$,
i.e., $f(k,i) \le f(k+1,i)$ for all $(k,i) \in \bbZ_+^{\leqslant n-1}
\times \bbD$. We denote by $\sfBI_d$ the set of block-increasing
column vectors with block size $d$.
\end{defn}

\begin{defn}\label{defn-block-wise-dominance}
For $n \in \overline{\bbN}$, let
$\vc{\mu}=(\mu(k,i))_{(k,i)\in\bbF^{\leqslant n}}$ and
$\vc{\eta}=(\eta(k,i))_{(k,i)\in\bbF^{\leqslant n}}$ denote
probability vectors with block size $d$.  The vector $\vc{\mu}$ is
said to be (stochastically) block-wise dominated by $\vc{\eta}$
(denoted by $\vc{\mu} \prec_d \vc{\eta}$) if
$\vc{\mu}\vc{T}_d^{\leqslant n} \le \vc{\eta}\vc{T}_d^{\leqslant n}$.
\end{defn}

\begin{defn}\label{defn-block-wise-domination}
For $n \in \overline{\bbN}$, let
$\vc{P}_h=(p_h(k,i;l,j))_{(k,i),(l,j)\in\bbF^{\leqslant n}}$ ($h=1,2$)
denote a stochastic matrix with block size $d$. The matrix $\vc{P}_1$
is said to be (stochastically) block-wise dominated by $\vc{P}_2$
(denoted by $\vc{P}_1 \prec_d \vc{P}_2$) if
$\vc{P}_1\vc{T}_d^{\leqslant n} \le \vc{P}_2\vc{T}_d^{\leqslant n}$.
\end{defn}

\begin{rem}\label{prop-3}
The columns of $\vc{T}_d^{\leqslant n}$ are linearly independent
vectors in $\sfBI_d$, and thus every vector $\vc{f} \in \sfBI_d$ is
expressed as a linear combination of columns of $\vc{T}_d^{\leqslant
  n}$. Therefore $\vc{\mu} \prec_d \vc{\eta}$ (resp.~$\vc{P}_1 \prec_d
\vc{P}_2$) if and only if $\vc{\mu}\vc{f} \le \vc{\eta}\vc{f}$
(resp.~$\vc{P}_1\vc{f} \le \vc{P}_2\vc{f}$) for any $\vc{f} \in
\sfBI_d$. According to this equivalence, we can define the block-wise
dominance relation ``$\prec_d$" (see Definitions 2.2 and 2.7 in
\cite{Li00}).
\end{rem}

\subsection{Basic results on block-monotone stochastic matrices}

In this subsection, we present three propositions. The first two of
them hold for any $|\bbF^{\leqslant n}| \times |\bbF^{\leqslant n}|$
($n \in \overline{\bbN}$) stochastic matrix $\vc{S}=(s(k,i;l,j))$ in
$\sfBM_d$. The first proposition is immediate from
Definition~\ref{defn-BM} and thus its proof is omitted. The second one
is an extension of Theorem~1.1 in \cite{Keil77}.
\begin{prop}\label{prop-BM_d}
$\vc{S} \in \sfBM_d$ if and only if $(\vc{T}_d^{\leqslant
    n})^{-1}\vc{S}\vc{T}_d^{\leqslant n} \ge \vc{O}$.
\end{prop}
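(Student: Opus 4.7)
The plan is to prove the equivalence by direct block-entry computation, showing that the condition $(\vc{T}_d^{\leqslant n})^{-1}\vc{S}\vc{T}_d^{\leqslant n}\ge\vc{O}$ is literally a rewriting of the inequality that defines $\sfBM_d$ in Definition~\ref{defn-BM}.

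First I would work out what the right multiplication by $\vc{T}_d^{\leqslant n}$ does. Because the $((l,j),(m,j'))$ entry of $\vc{T}_d^{\leqslant n}$ equals $\delta_{j,j'}\mathbf{1}\{l\ge m\}$, an easy expansion gives
\[
\bigl(\vc{S}\vc{T}_d^{\leqslant n}\bigr)(k,i;m,j) \;=\; \sum_{l=m}^{n} s(k,i;l,j), \qquad (k,i),(m,j)\in\bbF^{\leqslant n}.
\]
Thus right multiplication by $\vc{T}_d^{\leqslant n}$ converts each row-block of $\vc{S}$ into its tail sums over the column levels, and Definition~\ref{defn-BM} is nothing but the statement that these tail sums are nondecreasing in the row-block index $k$ for every fixed $(i;m,j)$.

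Next I would carry out the left multiplication by $(\vc{T}_d^{\leqslant n})^{-1}$, whose rows (as described in the $\vc{T}_d^{-1}$ display) take consecutive block differences. For any matrix $\vc{A}$ indexed by $\bbF^{\leqslant n}$,
\[
\bigl((\vc{T}_d^{\leqslant n})^{-1}\vc{A}\bigr)(k,i;\cdot) \;=\;
\begin{cases} \vc{A}(0,i;\cdot), & k=0,\\ \vc{A}(k,i;\cdot)-\vc{A}(k-1,i;\cdot), & 1\le k\le n.\end{cases}
\]
Applying this to $\vc{A}=\vc{S}\vc{T}_d^{\leqslant n}$ yields
\[
\bigl((\vc{T}_d^{\leqslant n})^{-1}\vc{S}\vc{T}_d^{\leqslant n}\bigr)(k,i;m,j)
= \sum_{l=m}^{n}\bigl[s(k,i;l,j)-s(k-1,i;l,j)\bigr] \qquad (k\ge 1),
\]
and $\sum_{l=m}^n s(0,i;l,j)$ for $k=0$. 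The $k=0$ block is automatically nonnegative since $\vc{S}$ is stochastic, so the nonnegativity of $(\vc{T}_d^{\leqslant n})^{-1}\vc{S}\vc{T}_d^{\leqslant n}$ reduces to
\[
\sum_{l=m}^{n} s(k,i;l,j)\;\ge\; \sum_{l=m}^{n} s(k-1,i;l,j),\qquad 1\le k\le n,\; 0\le m\le n,\; i,j\in\bbD,
\]
which, after shifting $k\mapsto k+1$, is precisely the defining inequality of $\sfBM_d$ in Definition~\ref{defn-BM}. This proves both implications simultaneously.

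There is no substantial obstacle; the only care needed is bookkeeping at the boundary row block $k=0$ (to see it adds no extra constraint beyond stochasticity) and at the boundary column block $m=n$ (where the tail sum collapses to a single term). Once these edge cases are dispatched, the equivalence is immediate, which is why the authors may elect to omit the proof.
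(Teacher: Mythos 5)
Your computation is correct and is exactly the verification the paper has in mind when it declares the proposition ``immediate from Definition~\ref{defn-BM}'' and omits the proof: right multiplication by $\vc{T}_d^{\leqslant n}$ forms the column-level tail sums, left multiplication by $(\vc{T}_d^{\leqslant n})^{-1}$ takes consecutive row-block differences, and the $k=0$ block needs only nonnegativity of $\vc{S}$. Nothing is missing.
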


\begin{prop}\label{prop-2}
The following are equivalent:
\begin{enumerate}
\item $\vc{S} \in \sfBM_d$.
\item $\vc{\mu}\vc{S} \prec_d \vc{\eta}\vc{S}$ for any two probability
  vectors $\vc{\mu}$ and $\vc{\eta}$ such that $\vc{\mu} \prec_d
  \vc{\eta}$.
\item $\vc{S}\vc{f} \in \sfBI_d$ for any $\vc{f} \in \sfBI_d$.
\end{enumerate}
\end{prop}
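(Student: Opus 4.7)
The plan is to prove the cycle (i) $\Rightarrow$ (iii) $\Rightarrow$ (ii) $\Rightarrow$ (i). The two ingredients I would lean on throughout are Proposition~\ref{prop-BM_d}, which reformulates block-monotonicity as the non-negativity of the conjugated matrix $(\vc{T}_d^{\leqslant n})^{-1}\vc{S}\vc{T}_d^{\leqslant n}$, and Remark~\ref{prop-3}, which says that $\vc{\mu} \prec_d \vc{\eta}$ if and only if $\vc{\mu}\vc{f} \le \vc{\eta}\vc{f}$ for every $\vc{f} \in \sfBI_d$. Together these give me a clean algebraic handle on both block-monotonicity and block-wise dominance.

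For (i) $\Rightarrow$ (iii), given any $\vc{f} \in \sfBI_d$, I would insert $\vc{T}_d^{\leqslant n}(\vc{T}_d^{\leqslant n})^{-1} = \vc{I}$ between $\vc{S}$ and $\vc{f}$, so that $(\vc{T}_d^{\leqslant n})^{-1}\vc{S}\vc{f}$ factors as the product of $(\vc{T}_d^{\leqslant n})^{-1}\vc{S}\vc{T}_d^{\leqslant n}$ and $(\vc{T}_d^{\leqslant n})^{-1}\vc{f}$. Both are non-negative by Proposition~\ref{prop-BM_d} and Definition~\ref{defn-BI} respectively, so $\vc{S}\vc{f} \in \sfBI_d$.

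For (iii) $\Rightarrow$ (ii), I would fix an arbitrary $\vc{f} \in \sfBI_d$ and note that $\vc{S}\vc{f} \in \sfBI_d$ by (iii). Applying the forward direction of Remark~\ref{prop-3} to the hypothesis $\vc{\mu} \prec_d \vc{\eta}$ with test vector $\vc{S}\vc{f}$ gives $(\vc{\mu}\vc{S})\vc{f} \le (\vc{\eta}\vc{S})\vc{f}$. Since $\vc{f}$ ranges over all of $\sfBI_d$, the reverse direction of Remark~\ref{prop-3} yields $\vc{\mu}\vc{S} \prec_d \vc{\eta}\vc{S}$.

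For (ii) $\Rightarrow$ (i), I would specialize to the unit probability (row) vectors $\vc{\mu}$ and $\vc{\eta}$ concentrated at $(k,i)$ and $(k+1,i)$, for arbitrary admissible $k$ and $i$. A short computation of $\vc{\mu}\vc{T}_d^{\leqslant n}$ and $\vc{\eta}\vc{T}_d^{\leqslant n}$ verifies the hypothesis $\vc{\mu} \prec_d \vc{\eta}$; the conclusion $\vc{\mu}\vc{S} \prec_d \vc{\eta}\vc{S}$ granted by (ii) then reads, entry by entry, as precisely the defining inequality of $\sfBM_d$ in Definition~\ref{defn-BM}. I do not foresee a major obstacle once Proposition~\ref{prop-BM_d} and Remark~\ref{prop-3} are in hand; each implication is nearly mechanical. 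The one mildly delicate point is the bookkeeping when $n$ is finite (the indices are capped at $n-1$ in Definition~\ref{defn-BM}), but this only restricts the range of $k$ in the (ii) $\Rightarrow$ (i) step and does not affect the structure of the argument.
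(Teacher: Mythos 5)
Your proof is correct, and it reorganizes the argument as a single cycle (i) $\Rightarrow$ (iii) $\Rightarrow$ (ii) $\Rightarrow$ (i), whereas the paper proves the two equivalences (a) $\Leftrightarrow$ (c) and (a) $\Leftrightarrow$ (b) separately, for four implications in total. Two of your links coincide with the paper's: your (i) $\Rightarrow$ (iii) is exactly the paper's factorization $(\vc{T}_d^{\leqslant n})^{-1}\vc{S}\vc{f} = (\vc{T}_d^{\leqslant n})^{-1}\vc{S}\vc{T}_d^{\leqslant n}\cdot(\vc{T}_d^{\leqslant n})^{-1}\vc{f}$, and your (ii) $\Rightarrow$ (i) is the paper's test with unit vectors at consecutive levels (you read the conclusion directly off Definition~\ref{defn-BM}, while the paper routes it through Proposition~\ref{prop-BM_d} and additionally notes the nonnegativity of the $(0,i)$th rows, which your version does not need). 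The genuinely new link is (iii) $\Rightarrow$ (ii): instead of the paper's matrix identity $\vc{\mu}\vc{S}\vc{T}_d^{\leqslant n} = \vc{\mu}\vc{T}_d^{\leqslant n}\cdot(\vc{T}_d^{\leqslant n})^{-1}\vc{S}\vc{T}_d^{\leqslant n}$ for (a) $\Rightarrow$ (b), you compose (iii) with both directions of the duality in Remark~\ref{prop-3}, obtaining $(\vc{\mu}\vc{S})\vc{f} = \vc{\mu}(\vc{S}\vc{f}) \le \vc{\eta}(\vc{S}\vc{f}) = (\vc{\eta}\vc{S})\vc{f}$ for all $\vc{f}\in\sfBI_d$. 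This buys you one fewer implication and makes the logical dependence of (ii) on (iii) explicit; the paper's version has the minor advantage that each of (b) and (c) is tied to (a) independently, so either equivalence can be cited without the other. Both arguments rest on the same two tools, Proposition~\ref{prop-BM_d} and Remark~\ref{prop-3}, and your handling of the finite-$n$ index range is consistent with the paper's conventions.
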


\begin{rem}
The equivalence of (a) and (c) is shown in Theorem~3.8 in \cite{Li00}.
\end{rem}

\begin{proof}[Proof of Proposition~\ref{prop-2}]
(a) $\Rightarrow$ (b):~~We assume that $\vc{S} \in \sfBM_d$ and
  $\vc{\mu} \prec_d \vc{\eta}$. It then follows from
  Proposition~\ref{prop-BM_d} and
  Definition~\ref{defn-block-wise-dominance} that
  $(\vc{T}_d^{\leqslant n})^{-1}\vc{S}\vc{T}_d^{\leqslant n}\ge
  \vc{O}$ and $\vc{\mu}\vc{T}_d^{\leqslant n}\le
  \vc{\eta}\vc{T}_d^{\leqslant n}$. Thus we have
\[
\vc{\mu}\vc{S}\vc{T}_d^{\leqslant n}
= \vc{\mu}\vc{T}_d^{\leqslant n} 
\cdot (\vc{T}_d^{\leqslant n})^{-1} \vc{S}\vc{T}_d^{\leqslant n} 
\le \vc{\eta}\vc{T}_d^{\leqslant n} 
\cdot (\vc{T}_d^{\leqslant n})^{-1}\vc{S}\vc{T}_d^{\leqslant n}
= \vc{\eta}\vc{S}\vc{T}_d^{\leqslant n},
\]
which shows $\vc{\mu}\vc{S} \prec_d \vc{\eta}\vc{S}$.

(b) $\Rightarrow$ (a):~~For $(k,i) \in \bbF^{\leqslant n}$, let
$\vc{\xi}_{(k,i)}=(\xi_{(k,i)}(l,j))_{(l,j) \in \bbF^{\leqslant n}}$
denote a $1 \times |\bbF^{\leqslant n}|$ unit vector whose $(k,i)$th
element is equal to one. Let $\vc{\eta} = \vc{\xi}_{(k,i)}$ and
$\vc{\mu}=\vc{\xi}_{(k-1,i)}$ for any fixed $(k,i) \in
(\bbZ_+^{\leqslant n}\setminus\{0\}) \times \bbD$. It then follows
that $\vc{\mu} \prec_d \vc{\eta}$ and thus condition~(b) yields
$(\vc{\eta}-\vc{\mu})\vc{S}\vc{T}_d^{\leqslant n}\ge \vc{0}$, where
$\vc{\eta} - \vc{\mu}$ is equal to the $(k,i)$th row of
$(\vc{T}_d^{\leqslant n})^{-1}$. Further $\vc{\xi}_{(0,i)}
\vc{S}\vc{T}_d^{\leqslant n}\ge \vc{0}$ ($i\in\bbD$), where
$\vc{\xi}_{(0,i)}$ is equal to the $(0,i)$th row of
$(\vc{T}_d^{\leqslant n})^{-1}$. As a result, we have
$(\vc{T}_d^{\leqslant n})^{-1}\vc{S}\vc{T}_d^{\leqslant n} \ge
\vc{O}$, i.e., $\vc{S} \in \sfBM_d$ (see Proposition~\ref{prop-BM_d}).

(a) $\Rightarrow$ (c):~~According to Definition~\ref{defn-BI},
$(\vc{T}_d^{\leqslant n})^{-1}\vc{f} \ge \vc{0}$ for any $\vc{f} \in
\sfBI_d$. Combining this with $(\vc{T}_d^{\leqslant
  n})^{-1}\vc{S}\vc{T}_d^{\leqslant n}\ge \vc{O}$ (due to
condition~(a)), we obtain
\[
(\vc{T}_d^{\leqslant n})^{-1}\vc{S}\vc{f} =
(\vc{T}_d^{\leqslant n})^{-1}\vc{S}\vc{T}_d^{\leqslant n} \cdot
(\vc{T}_d^{\leqslant n})^{-1}\vc{f} \ge \vc{0},
\]
and thus $\vc{S}\vc{f} \in \sfBI_d$.

(c) $\Rightarrow$ (a):~~Fix $\vc{f}$ to be a column of
$\vc{T}_d^{\leqslant n}$. Since $\vc{f} \in \sfBI_d$, it follows from
condition~(c) that $\vc{S}\vc{f} \in \sfBI_d$, i.e.,
$(\vc{T}_d^{\leqslant n})^{-1}\vc{S}\vc{f} \ge \vc{0}$. Therefore
$(\vc{T}_d^{\leqslant n})^{-1}\vc{S}\vc{T}_d^{\leqslant n}\ge \vc{O}$.
\end{proof}

The last proposition is a fundamental result for any two
$|\bbF^{\leqslant n}| \times |\bbF^{\leqslant n}|$ ($n \in
\overline{\bbN}$) stochastic matrices $\vc{P}_1=(p_1(k,i;l,j))$ and
$\vc{P}_2=(p_2(k,i;l,j))$ such that $\vc{P}_1 \prec_d \vc{P}_2$, which
is an extension of Lemma~1 in \cite{Gibs87}.
\begin{prop}\label{prop-4}
If $\vc{P}_1 \prec_d \vc{P}_2$ and either $\vc{P}_1 \in \sfBM_d$ or
$\vc{P}_2 \in \sfBM_d$, then the following statements hold:
\begin{enumerate}
\item  For all $k \in \bbZ_+^{\leqslant n}$ and $i,j \in \bbD$,
\[
\sum_{l\in\bbZ_+^{\leqslant n}}p_1(k,i;l,j)=\sum_{l\in\bbZ_+^{\leqslant n}}p_2(k,i;l,j),
\quad \mbox{which is constant with respect to $k$}.
\]
\item $\vc{P}_1^m \prec_d \vc{P}_2^m$ for all $m \in \bbN$.
\item Suppose that $\vc{P}_2$ is irreducible. If $\vc{P}_2$ is
  recurrent (resp.~positive recurrent), then $\vc{P}_1$ has exactly
  one recurrent (resp.~positive recurrent) class that includes the
  states $\{(0,i);i\in\bbD\}$, which is reachable from all the other
  states with probability one. Thus if $\vc{P}_2$ is positive
  recurrent, then $\vc{P}_1$ and $\vc{P}_2$ have the unique stationary
  distributions $\vc{\pi}_1$ and $\vc{\pi}_2$, respectively, and
  $\vc{\pi}_1 \prec_d \vc{\pi}_2$.
\end{enumerate}

\end{prop}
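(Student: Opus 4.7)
The plan is to prove the three parts in order, each building on the previous.

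For (i), the defining inequality $\vc{P}_1\vc{T}_d^{\leqslant n} \le \vc{P}_2\vc{T}_d^{\leqslant n}$ reads coordinate-wise as $\sum_{m=l}^n p_1(k,i;m,j) \le \sum_{m=l}^n p_2(k,i;m,j)$. Taking $l = 0$ and summing over $j \in \bbD$ yields $1 \le 1$, since both matrices are stochastic, so equality must hold for every $j$. The constancy in $k$ then comes from Definition~\ref{defn-BM} applied to whichever of $\vc{P}_1,\vc{P}_2$ lies in $\sfBM_d$: this gives a $\le$ between consecutive values of $k$, and the same summation-over-$j$ trick upgrades it to equality.

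For (ii), I induct on $m$, the base case being the hypothesis. For the inductive step, apply the telescope $\vc{P}_1^{m+1}-\vc{P}_2^{m+1}=\vc{P}_1(\vc{P}_1^m-\vc{P}_2^m)+(\vc{P}_1-\vc{P}_2)\vc{P}_2^m$, multiply by $\vc{T}_d^{\leqslant n}$ on the right, and insert $\vc{T}_d^{\leqslant n}(\vc{T}_d^{\leqslant n})^{-1}$ between $\vc{P}_1-\vc{P}_2$ and $\vc{P}_2^m$. The first summand becomes $\vc{P}_1\bigl(\vc{P}_1^m\vc{T}_d^{\leqslant n}-\vc{P}_2^m\vc{T}_d^{\leqslant n}\bigr)\le \vc{O}$ by the inductive hypothesis and $\vc{P}_1\ge\vc{O}$. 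The second summand becomes $\bigl(\vc{P}_1\vc{T}_d^{\leqslant n}-\vc{P}_2\vc{T}_d^{\leqslant n}\bigr)(\vc{T}_d^{\leqslant n})^{-1}\vc{P}_2^m\vc{T}_d^{\leqslant n}$, whose left factor is $\le\vc{O}$ by hypothesis and whose right factor equals $\bigl((\vc{T}_d^{\leqslant n})^{-1}\vc{P}_2\vc{T}_d^{\leqslant n}\bigr)^m\ge\vc{O}$ when $\vc{P}_2\in\sfBM_d$ by Proposition~\ref{prop-BM_d}. If instead $\vc{P}_1\in\sfBM_d$, I reverse the telescope to $\vc{P}_1^{m+1}-\vc{P}_2^{m+1}=(\vc{P}_1^m-\vc{P}_2^m)\vc{P}_1+\vc{P}_2^m(\vc{P}_1-\vc{P}_2)$ and argue symmetrically.

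For (iii), I construct a pathwise coupling. Part~(i) shows that the $J$-marginal has the same transition matrix $\vc{\varPsi}$ under $\vc{P}_1$ and $\vc{P}_2$, so I can couple the two chains to share a common $J$-process. For each fixed transition $i\to j$ of the $J$-chain, the conditional laws $p_h(k,i;\cdot,j)/\psi(i,j)$ on the level variable are stochastically increasing in $k$ by block monotonicity of one of the $\vc{P}_h$, and they are stochastically ordered across $h$ by $\vc{P}_1\prec_d\vc{P}_2$ combined with the same block monotonicity. A step-wise monotone coupling then gives $X^{(1)}_\nu\le X^{(2)}_\nu$ a.s.\ whenever $X^{(1)}_0\le X^{(2)}_0$ and $J^{(1)}_0=J^{(2)}_0$. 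Assuming $\vc{P}_2$ is irreducible and recurrent, it returns to any fixed $(0,i)$ a.s.; under the coupling $X^{(1)}$ must then also be $0$ with $J^{(1)}=i$, so $(0,i)$ is recurrent for $\vc{P}_1$, and from any starting state the class containing $\{(0,i);i\in\bbD\}$ is reached a.s., which makes it the unique recurrent class. Positive recurrence transfers because the $\vc{P}_1$-return time to $(0,i)$ is dominated pathwise by the $\vc{P}_2$-return time. Finally, $\vc{\pi}_1\prec_d\vc{\pi}_2$ follows by Ces\`aro averaging the relation $\vc{P}_1^m\prec_d\vc{P}_2^m$ from (ii), since the row-wise Ces\`aro averages of $\vc{P}_h^m$ converge entrywise to $\vc{\pi}_h$.

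The hardest step is the step-wise monotone coupling in (iii): I must check that $\vc{P}_1\prec_d\vc{P}_2$ together with block monotonicity of one of the $\vc{P}_h$ really does yield both stochastic monotonicity in $k$ and stochastic ordering between the two chains, of the one-dimensional conditional distributions $p_h(k,i;\cdot,j)/\psi(i,j)$. This is the two-matrix generalization of the pathwise ordering property for a single BMMC announced after Proposition~\ref{prop-gamma(i,j)}, and every conclusion of (iii) rests on it.
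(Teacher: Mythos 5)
Your proof is correct and follows essentially the same route as the paper: the stochasticity argument for (i), induction conjugated by $\vc{T}_d^{\leqslant n}$ for (ii), and a pathwise monotone coupling (the paper's Lemma~\ref{lem2-discrete-ordering}) plus a limit of powers for (iii). The only cosmetic differences are that you package the induction as a telescoping identity and handle both monotonicity cases explicitly, and you deduce $\vc{\pi}_1 \prec_d \vc{\pi}_2$ by Ces\`aro averaging where the paper aperiodizes via $(\vc{I}+\vc{P}_h)/2$ and applies dominated convergence.
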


\begin{proof}
We consider only the case of $\vc{P}_1 \in \sfBM_d$ because the case
of $\vc{P}_2 \in \sfBM_d$ is discussed in a very similar way. 
We first prove statement (a). It follows from $\vc{P}_1 \in \sfBM_d$
and Proposition~\ref{prop-gamma(i,j)} that
$\sum_{l\in\bbZ_+^{\leqslant n}}p_1(k,i;l,j)$ is constant with respect
to $k$ for each $(i,j) \in\bbD^2$, which is denoted by
$\psi_1(i,j)$. Further from $\vc{P}_1 \prec_d \vc{P}_2$, we have
\begin{equation}
\psi_1(i,j) 
= \sum_{l\in\bbZ_+^{\leqslant n}}p_1(k,i;l,j)
\le \sum_{l\in\bbZ_+^{\leqslant n}}p_2(k,i;l,j),
\qquad k \in \bbZ_+^{\leqslant n},~i,j \in \bbD.
\label{ineqn-gamma_1(i,j)}
\end{equation}
Since $\vc{P}_1$ and $\vc{P}_2$ are stochastic matrices,
$\sum_{j\in\bbD}\psi_1(i,j) =
\sum_{j\in\bbD}\sum_{l\in\bbZ_+^{\leqslant n}}p_2(k,i;l,j)=1$ for all
$(k,i) \in \bbF^{\leqslant n}$. From this and
(\ref{ineqn-gamma_1(i,j)}), we obtain $\psi_1(i,j) =
\sum_{l\in\bbZ_+^{\leqslant n}}p_2(k,i;l,j)$ for all $k \in
\bbZ_+^{\leqslant n}$ and $i,j\in \bbD$.

Next we prove statement (b) by induction.  Suppose that for some
$m \in \bbN$, $\vc{P}_1^m \prec_d \vc{P}_2^m$, i.e.,
$\vc{P}_1^m\vc{T}_d^{\leqslant n} \le \vc{P}_2^m\vc{T}_d^{\leqslant
  n}$ (which is true at least for $m=1$). Combining this with
$(\vc{T}_d^{\leqslant n})^{-1} \vc{P}_1 \vc{T}_d^{\leqslant n} \ge
\vc{O}$ (due to $\vc{P}_1 \in \sfBM_d$) yields
\begin{eqnarray*}
\vc{P}_1^{m+1}\vc{T}_d^{\leqslant n}
&=& \vc{P}_1^m \vc{T}_d^{\leqslant n} 
\cdot (\vc{T}_d^{\leqslant n})^{-1} \vc{P}_1 \vc{T}_d^{\leqslant n}
\nonumber
\\
&\le& \vc{P}_2^m \vc{T}_d^{\leqslant n} 
\cdot (\vc{T}_d^{\leqslant n})^{-1} \vc{P}_1 \vc{T}_d^{\leqslant n}
= \vc{P}_2^m \cdot  \vc{P}_1\vc{T}_d^{\leqslant n}
\nonumber
\\
&\le& \vc{P}_2^m \cdot  \vc{P}_2\vc{T}_d^{\leqslant n}
= \vc{P}_2^{m+1} \vc{T}_d^{\leqslant n},
\end{eqnarray*}
and thus $\vc{P}_1^{m+1} \prec_d \vc{P}_2^{m+1}$. Therefore statement
(b) is true.

Finally we prove statement (c). Note that there exist two Markov
chains characterized by $\vc{P}_1$ and $\vc{P}_2$, called Markov
chains 1 and 2, which are pathwise ordered by the block-wise dominance
of $\vc{P}_2$ over $\vc{P}_1$ (see
Lemma~\ref{lem2-discrete-ordering}). Since $\vc{P}_2$ is irreducible
and recurrent, Markov chain 2 and thus Markov chain 1 can reach any
state $(0,i)$ ($i\in\bbD$) from all the states in the state space
$\bbF^{\leqslant n}$ with probability one and the mean first passage
time to each state $(0,i)$ ($i\in\bbD$) is finite if $\vc{P}_2$ is
positive recurrent. These facts show that the first part of statement
(c) holds.  Finally we prove $\vc{\pi}_1 \prec_d \vc{\pi}_2$. Note
here that $(\vc{I}+\vc{P}_h)/2$ ($h=1,2$) is aperiodic and has the
same stationary distribution as that of $\vc{P}_h$. Thus we assume
without loss of generality that $\vc{P}_h$ ($h=1,2$) is aperiodic.  It
then follows from statement (b) and the dominated convergence theorem
that $\vc{e}\vc{\pi}_1\vc{T}_d^{\leqslant n} \le
\vc{e}\vc{\pi}_2\vc{T}_d^{\leqslant n}$ (see Theorem~4 in Section I.6
of \cite{Chun67}) and thus $\vc{\pi}_1 \vc{T}_d^{\leqslant n} \le
\vc{\pi}_2\vc{T}_d^{\leqslant n}$.
\end{proof}

\section{Main result}\label{sec-main-results}

This section presents a bound for $\| \trunc{\vc{\pi}}_n - \vc{\pi}
\|$, which is the main result of this paper. To establish the bound,
we use the $\vc{v}$-norm, where $\vc{v}=(v(k,i))_{(k,i)\in\bbF}$ is
any nonnegative column vector. The $\vc{v}$-norm is defined as
follows: For any $1 \times |\bbF|$ vector
$\vc{x}=(x(k,i))_{(k,i)\in\bbF}$,
\[
\left\| \vc{x} \right\|_{\svc{v}} 
= \sup_{|\svc{g}| \le \svc{v}} 
\left| \sum_{(k,i)\in\bbF}x(k,i)g(k,i) \right|
= \sup_{\svc{0} \le \svc{g} \le \svc{v}}\sum_{(k,i)\in\bbF}|x(k,i)|g(k,i),
\]
where $|\vc{g}|$ is a column vector obtained by taking the
absolute value of each element of $\vc{g}$. By definition, $\| \cdot
\|_{\vc{e}} = \| \cdot \|$, i.e., the $\vc{e}$-norm is equivalent to
the total variation norm.

We need some further notations.  For $m \in \bbZ_+$ and
$(k,i)\in\bbF$, let $\vc{p}^m(k,i) = (p^m(k,i;l,j))_{(l,j)\in\bbF}$
and $\trunc{\vc{p}}_n^m(k,i) =(\trunc{p}_n^m(k,i;l,j))_{(l,j)\in\bbF}$
denote probability vectors such that $p^m(k,i;l,j)$ and
$\trunc{p}_n^m(k,i;l,j)$ represent the $(k,i;l,j)$th elements of
$\vc{P}^m$ and $(\trunc{\vc{P}}_n)^m$, respectively (when $m=1$, the
superscript ``1" may be omitted). Clearly, $p^m(k,i;l,j) =\PP(X_m = l,
J_m = j \mid X_0 = k, J_0 = i)$ for $(k,i) \times (l,j)\in\bbF^2$.

Let $\varpi(i) = \sum_{k=0}^{\infty}\pi(k,i) > 0$ for $i \in \bbD$. Note
here that if $\vc{P} \in \sfBM_d$, then
$\vc{\varpi}=(\varpi(i))_{i\in\bbD}$ is the stationary distribution of
$\vc{\varPsi}$ (and thus the Markov chain $\{J_{\nu}\}$; see
Proposition~\ref{prop-gamma(i,j)}). Note also that if $\vc{P} \in
\sfBM_d$, then $\trunc{\vc{P}}_n \prec_d \vc{P}$ and thus
$\trunc{\vc{\pi}}_n \prec_d \vc{\pi}$ (due to
Proposition~\ref{prop-4}~(c)), which implies that for all $n \in
\bbN$,
\begin{equation}
\sum_{k=0}^{\infty}\trunc{\pi}_n(k,i) 
= \sum_{k=0}^{\infty}\pi(k,i) = \varpi(i),\qquad i \in \bbD.
\label{eqn-varpi(i)}
\end{equation}
For any function $\varphi(\cdot,\cdot)$ on $\bbF$, let
$\varphi(k,\vc{\varpi}) = \sum_{i\in\bbD} \varpi(i)\varphi(k,i)$ for
$k \in \bbZ_+$.

In what follows, we estimate $\| \trunc{\vc{\pi}}_n - \vc{\pi}
\|$. By the triangle inequality, we have
\begin{eqnarray}
\left\| \trunc{\vc{\pi}}_n - \vc{\pi} \right\|
&\le& \left\| \vc{p}^m(0,\vc{\varpi}) - \vc{\pi} \right\|
+ \left\| \trunc{\vc{p}}_n^m(0,\vc{\varpi}) - \trunc{\vc{\pi}}_n \right\|
\nonumber
\\
&& {} \quad 
+ \left\| \trunc{\vc{p}}_n^m(0,\vc{\varpi}) - \vc{p}^m(0,\vc{\varpi}) \right\|.
\label{add-eqn-14}
\end{eqnarray}
The third term on the right hand side of (\ref{add-eqn-14}) is bounded
as in the following lemma, which is proved without $\vc{P} \in
\sfBM_d$.
\begin{lem}\label{appen-lem-1}
For all $m \in \bbN$, 
\begin{eqnarray}
\left\|  
\trunc{\vc{p}}_n^m(k,i) - \vc{p}^m(k,i)
\right\|
&\le& \sum_{h=0}^{m-1}
\sum_{(l,j)\in\bbF}\trunc{p}_n^h(k,i;l,j)\Delta_n(l,j),\quad
n \in \bbN,~(k,i) \in \bbF, \qquad~
\label{eqn-14a}
\end{eqnarray}
where 
\begin{equation}
\Delta_n(l,j) = \left\| \vc{p}(l,j) - \trunc{\vc{p}}_n(l,j)
\right\| = 2\sum_{l'>n,j'\in\bbD}p(l,j;l',j'),
\qquad (l,j)\in\bbF.
\label{defn-Delta(l,j)}
\end{equation}
\end{lem}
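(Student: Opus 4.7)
The plan is to start from the telescoping identity
\[
\vc{P}^m - (\trunc{\vc{P}}_n)^m
= \sum_{h=0}^{m-1} (\trunc{\vc{P}}_n)^h \, (\vc{P} - \trunc{\vc{P}}_n) \, \vc{P}^{m-1-h},
\]
which is verified by a one-line induction on $m$ (or by recognising the right-hand side as a telescoping sum). Note that this identity uses nothing about block-monotonicity of $\vc{P}$, which matches the fact that Lemma~\ref{appen-lem-1} is stated with no such hypothesis.

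Taking the $(k,i)$th row of both sides and applying the triangle inequality in total variation bounds $\| \vc{p}^m(k,i) - \trunc{\vc{p}}_n^m(k,i) \|$ by a sum over $h=0,\dots,m-1$ of the norm of $\trunc{\vc{p}}_n^h(k,i) (\vc{P} - \trunc{\vc{P}}_n) \vc{P}^{m-1-h}$. For each fixed $h$, I would expand the leading factor as a nonnegative combination over $(l,j) \in \bbF$, pull the scalar $\trunc{p}_n^h(k,i;l,j)$ out by a second triangle inequality, and then invoke the contraction property of stochastic matrices in total variation, namely $\| \vc{x}\vc{P}^{m-1-h} \| \le \| \vc{x} \|$ for any signed row measure $\vc{x}$ with finite total variation. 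This strips off the trailing $\vc{P}^{m-1-h}$ and leaves exactly $\| \vc{p}(l,j) - \trunc{\vc{p}}_n(l,j) \| = \Delta_n(l,j)$; assembling the pieces yields (\ref{eqn-14a}).

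The closed form $\Delta_n(l,j) = 2\sum_{l'>n,\,j'\in\bbD} p(l,j;l',j')$ appearing in (\ref{defn-Delta(l,j)}) then follows by a direct calculation from the definition of the last-column-block-augmented truncation: the signed row $\vc{p}(l,j) - \trunc{\vc{p}}_n(l,j)$ vanishes on columns with index $l' < n$, equals the nonpositive value $-\sum_{m>n} p(l,j;m,j')$ on column $n$, and equals $p(l,j;l',j')$ on columns with index $l' > n$; summing absolute values and observing that the mass piled at column $n$ matches the total tail mass produces twice the tail. I do not expect any serious obstacle: the argument is essentially a clean telescoping-plus-triangle-inequality computation, and the only point that deserves a careful word is the contraction $\|\vc{x}\vc{P}\| \le \|\vc{x}\|$ on the infinite state space $\bbF$, which is a standard consequence of absolute convergence combined with the stochasticity of each row of $\vc{P}$.
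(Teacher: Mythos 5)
Your proof is correct and is essentially the paper's argument in unrolled form: the telescoping identity $\vc{P}^m - (\trunc{\vc{P}}_n)^m = \sum_{h=0}^{m-1} (\trunc{\vc{P}}_n)^h (\vc{P} - \trunc{\vc{P}}_n) \vc{P}^{m-1-h}$ is exactly what the paper's one-step recursion $(\trunc{\vc{P}}_n)^{m+1} - \vc{P}^{m+1} = \trunc{\vc{P}}_n [(\trunc{\vc{P}}_n)^m - \vc{P}^m] + (\trunc{\vc{P}}_n - \vc{P})\vc{P}^m$ produces when iterated, and your use of the total-variation contraction $\|\vc{x}\vc{P}^{m-1-h}\| \le \|\vc{x}\|$ plays the same role as the paper's appeal to $\sum_{(l',j')} p^m(l,j;l',j') = 1$. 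No gaps; the closed form for $\Delta_n(l,j)$ is also computed correctly.
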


\begin{proof}
Clearly (\ref{eqn-14a}) holds for $m = 1$. Note here that for
$m,n\in\bbN$,
\[
(\trunc{\vc{P}}_n)^{m+1} - \vc{P}^{m+1}
= \trunc{\vc{P}}_n \cdot \left[ (\trunc{\vc{P}}_n)^m - \vc{P}^m \right]
+ (\trunc{\vc{P}}_n - \vc{P})\vc{P}^m.
\]
It then follows that for $m=2,3,\dots$, 
\begin{eqnarray}
\lefteqn{
\left\|  
\trunc{\vc{p}}_n^{m+1}(k,i) - \vc{p}^{m+1}(k,i)
\right\|
}
\quad && 
\nonumber
\\
&\le& \sum_{(l,j)\in\bbF} \trunc{p}_n(k,i;l,j) 
\left\|
\trunc{\vc{p}}_n^m(l,j) - \vc{p}^m(l,j)
\right\|
\nonumber
\\
&& {} \qquad
+ 
\sum_{(l,j)\in\bbF} 
| \trunc{p}_n(k,i;l,j) - p(k,i;l,j) |
\sum_{(l',j')\in\bbF} 
p^m(l,j;l',j')
\nonumber
\\
&=& \sum_{(l,j)\in\bbF} \trunc{p}_n(k,i;l,j) 
\left\| 
\trunc{\vc{p}}_n^m(l,j) - \vc{p}^m(l,j)
\right\|
+ \Delta_n(k,i),
\label{add-ineqn-01}
\end{eqnarray}
where the last equality is due to $\sum_{(l',j')\in\bbF}
p^m(l,j;l',j')=1$.  Thus if (\ref{eqn-14a}) holds for some $m \ge 2$,
then (\ref{add-ineqn-01}) yields
\begin{eqnarray*}
\lefteqn{
\left\|
\trunc{\vc{p}}_n^{m+1}(k,i) - \vc{p}^{m+1}(k,i) 
\right\|
}
&&
\nonumber
\\
&\le& \sum_{(l,j)\in\bbF} \trunc{p}_n(k,i;l,j) 
\left[
\sum_{h=0}^{m-1}
\sum_{(l',j')\in\bbF}\trunc{p}_n^h(l,j;l',j')\Delta_n(l',j')
\right]
 + \Delta_n(k,i)
\nonumber
\\
&=& \sum_{h=0}^{m-1}\sum_{(l',j')\in\bbF} 
\left(\sum_{(l,j)\in\bbF} \trunc{p}_n(k,i;l,j) \trunc{p}_n^h(l,j;l',j')
\right)\Delta_n(l',j')
 + \Delta_n(k,i)
\nonumber
\\
&=& 
 \sum_{h=0}^{m-1}
\sum_{(l',j')\in\bbF}\trunc{p}_n^{h+1}(k,i;l',j')
 \Delta_n(l',j')
+ 
\Delta_n(k,i)
=
 \sum_{h=0}^{m}
\sum_{(l,j)\in\bbF}\trunc{p}_n^h(k,i;l,j)
 \Delta_n(l,j).
\end{eqnarray*}
\end{proof}

The following lemma implies that the first two terms on the right hand
side of (\ref{add-eqn-14}) converge to zero as $m \to \infty$ without
the aperiodicity of $\vc{P}$.
\begin{lem}\label{lem-converge-P^m}
Let $\kappa$ denote the period of $\vc{P}$. If $\vc{P} \in \sfBM_d$
and $\vc{P}$ is irreducible, then the following hold:
\begin{enumerate}
\item There exist disjoint nonempty sets
  $\bbD_0,\bbD_1,\dots,\bbD_{\kappa-1}$ such that $\bbD =
  \cup_{h=0}^{\kappa-1}\bbD_{h}$ and
\[
\sum_{(l,j)\in\bbZ_+\times\bbD_{h+1}}p(k,i;l,j) = 1,
\quad (k,i) \in \bbZ_+\times\bbD_h,~~h \in \bbZ_+^{\leqslant \kappa-1},
\]
where $\bbD_{h'} = \bbD_h$ if $h' \equiv h$ ($\Mod ~\kappa$).
\item $\kappa \le d = |\bbD|$. Thus every irreducible monotone
  stochastic matrix (which is in $\sfBM_1$) is aperiodic.
\item If $\vc{P}$ is positive recurrent, then for $k \in \bbZ_+$,
\begin{eqnarray}
\lim_{m\to\infty}\vc{p}^m(k,\vc{\varpi}) = \vc{\pi},
\qquad
\lim_{m\to\infty}\trunc{\vc{p}}_n^m(k,\vc{\varpi}) = \trunc{\vc{\pi}}_n,
\quad n\in\bbN.
\label{lim-p^m}
\end{eqnarray}
\end{enumerate}
\end{lem}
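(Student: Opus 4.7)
The strategy is to prove (a) via a pathwise coupling, then read off (b) as an immediate corollary, and finally deduce (c) from the cyclic-class decomposition obtained in (a).

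For (a), let $C_0, \ldots, C_{\kappa-1}$ denote the cyclic classes of $\vc{P}$. Fix $i \in \bbD$ and $k \in \bbN$. Invoking Lemma~\ref{lem1-discrete-ordering}, I would build coupled BMMCs $\{(X'_{\nu}, J'_{\nu})\}$ started at $(0, i)$ and $\{(X''_{\nu}, J''_{\nu})\}$ started at $(k, i)$, with $X'_{\nu} \le X''_{\nu}$ and $J'_{\nu} = J''_{\nu}$ almost surely for every $\nu$. Irreducibility of $\vc{P}$ furnishes some $n \ge 1$ with $p^n(k, i; 0, i) > 0$, so the marginal event $\{X''_n = 0,\ J''_n = i\}$ has positive probability. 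On that event the sandwich $0 \le X'_n \le X''_n = 0$ forces $X'_n = 0$ and $J'_n = i$, so the lower chain also visits $(0, i)$ at time $n$; hence $p^n(0, i; 0, i) > 0$, making $n$ a return time for $(0, i)$ and forcing $\kappa \mid n$. Because any $n$-step transition shifts cyclic class by $n \bmod \kappa = 0$, the states $(0, i)$ and $(k, i)$ lie in the same cyclic class. Letting $k$ vary shows the cyclic class of $(k, i)$ depends only on $i$, so setting $\bbD_h = \{i \in \bbD : (0, i) \in C_h\}$ yields a partition of $\bbD$ into $\kappa$ nonempty subsets with $C_h = \bbZ_+ \times \bbD_h$, and the required transition identity is then a direct consequence of the cyclic-class property of $\vc{P}$.

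Part (b) is immediate from (a): partitioning the $d$-element set $\bbD$ into $\kappa$ nonempty subsets forces $\kappa \le d$, and the case $d = 1$ recovers the classical aperiodicity of irreducible monotone chains.

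For (c), under (a) the matrix $\vc{P}^{\kappa}$ preserves each cyclic class $\bbZ_+ \times \bbD_h$ and restricts there to an irreducible aperiodic positive recurrent chain whose stationary distribution comes from $\vc{\pi}$ by restriction and renormalization by $\varpi(\bbD_h) := \sum_{i \in \bbD_h} \varpi(i)$, which by definition of $\vc{\varpi}$ equals $\sum_{(l, j) \in \bbZ_+ \times \bbD_h} \pi(l, j)$. The initial distribution underlying $\vc{p}^m(k, \vc{\varpi})$ places total mass $\varpi(\bbD_h)$ on cyclic class $h$, matching $\vc{\pi}$ there exactly; hence the restricted initial distributions converge under iterates of the restricted $\vc{P}^{\kappa}$ to the restricted stationary distributions, and summing over $h$ gives $\vc{p}^{m\kappa}(k, \vc{\varpi}) \to \vc{\pi}$ as $m \to \infty$. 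Writing a general $m$ as $N\kappa + r$ and using $\vc{p}^m(k, \vc{\varpi}) = \vc{p}^{N\kappa}(k, \vc{\varpi})\,\vc{P}^r$ together with $\vc{\pi}\vc{P}^r = \vc{\pi}$ extends the limit to all $m$. The companion limit for $\trunc{\vc{p}}_n^m$ follows by applying the same argument to $\trunc{\vc{P}}_n$, which inherits block-monotonicity and, via Proposition~\ref{prop-4}(c) applied to $\trunc{\vc{P}}_n \prec_d \vc{P}$, possesses a unique positive recurrent class containing $\{(0, i) : i \in \bbD\}$.

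The main obstacle is the coupling step in (a): the delicate point is recognizing that the nonnegativity of the levels combined with the pathwise inequality $X'_n \le X''_n$ forces both coupled chains to hit $(0, i)$ at the same time $n$, which is precisely what transfers the return-time (and hence period) information from the higher chain to the lower one and pins down the phase-based cyclic structure. Once this observation is in hand, (b) is trivial and (c) reduces to standard aperiodic convergence on each cyclic class.
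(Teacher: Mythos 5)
Your proof is correct and follows essentially the same route as the paper: part (a) rests on the same pathwise-ordering coupling of Lemma~\ref{lem1-discrete-ordering} and the same key observation that $0 \le X'_n \le X''_n = 0$ forces the two coupled chains to hit $(0,i)$ simultaneously, only organized as a direct argument rather than the paper's proof by contradiction. Part (c) likewise reduces to the standard periodic convergence theorem via the identity $\sum_{i\in\bbD_h}\varpi(i)=\sum_{(l,j)\in\bbZ_+\times\bbD_h}\pi(l,j)$, exactly as in the paper.
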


\begin{proof}
We prove statement (a) by contradiction. Proposition~5.4.2 in
\cite{Meyn09} shows that there exist disjoint nonempty sets
$\bbF_0,\bbF_1,\dots,\bbF_{\kappa-1}$ such that $\bbF =
\cup_{h=0}^{\kappa-1}\bbF_{h}$ and
\begin{equation}
\sum_{(l,j)\in\bbF_{h+1}}p(k,i;l,j) = 1,
\quad (k,i) \in \bbF_h,~~h \in \bbZ_+^{\leqslant \kappa-1},
\label{eqn-aperiodic-P}
\end{equation}
where $\bbF_{h'} = \bbF_h$ if $h' \equiv h$ ($\Mod ~\kappa$). We
suppose that there exist some $(k_{\ast},i_{\ast}) \in \bbN \times
\bbD$ and $h_{\ast} \in \bbZ_+^{\leqslant \kappa-1}$ such that
$(0,i_{\ast}) \in \bbF_{h_{\ast}}$ and $(k_{\ast},i_{\ast}) \not\in
\bbF_{h_{\ast}}$. We now consider coupled Markov chains
$\{(X'_{\nu},J'_{\nu});\nu\in\bbZ_+\}$ and
$\{(X''_{\nu},J''_{\nu});\nu\in\bbZ_+\}$ with transition probability
matrix $\vc{P}$, which are pathwise ordered such that $X'_{\nu}\le
X''_{\nu}$ and $J'_{\nu} = J''_{\nu}$ for all $\nu \in \bbN$ if
$X_0'\le X_0''$ and $J_0' = J_0''$ (see
Lemma~\ref{lem1-discrete-ordering}).  We also fix $(X'_0,J'_0) =
(0,i_{\ast}) \in \bbF_{h_{\ast}}$ and $(X''_0,J'_0) =
(k_{\ast},i_{\ast}) \not\in \bbF_{h_{\ast}}$. It then follows from
(\ref{eqn-aperiodic-P}) that
\begin{equation}
\mbox{$(X'_{\nu},J'_{\nu}) \in \bbF_h$ implies 
$(X''_{\nu},J''_{\nu}) \not\in \bbF_h$ \quad for all $\nu \in \bbN$. }
\label{add-eqn-23}
\end{equation}
Further since $\vc{P}$ is irreducible, there exists some $\nu_{\ast}
\in \bbN$ such that $(X''_{\nu_{\ast}},J''_{\nu_{\ast}}) =
(0,i_{\ast})$ and thus $(X'_{\nu_{\ast}},J'_{\nu_{\ast}}) \in \bbN
\times \{i_{\ast}\}$ due to (\ref{add-eqn-23}).  This conclusion,
however, contradicts the pathwise ordering of
$\{(X'_{\nu},J'_{\nu})\}$ and $\{(X''_{\nu},J''_{\nu})\}$, i.e.,
$X'_{\nu} \le X''_{\nu}$ and $J'_{\nu} = J''_{\nu}$ for all
$\nu\in\bbN$. As a result, statement (a) holds, and statement (b) is
immediate from statement (a).

Next we prove statement (c). Fix $k \in \bbZ_+$ arbitrarily. Let
$q:\bbD \mapsto \bbZ_+^{\leqslant \kappa-1}$ denote a surjection
function such that $i \in \bbD_{q(i)}$. It then follows from Theorem 4
in Section I.6 of \cite{Chun67} that for $h \in \bbZ_+^{\leqslant
  \kappa-1}$,
\begin{equation}
\lim_{m' \to\infty}p^{m' \kappa + h}(k,i;l,j) 
= \dd{I}_{\{h \equiv q(j) - q(i) \,(\Mod \, \kappa)\}} \cdot \kappa \pi(l,j),
\qquad (l,j) \in \bbF,
\label{add-eqn-24}
\end{equation}
where $\dd{I}_{\{ \cdot \}}$ denotes a function that takes value one
if the statement in the braces is true and takes value zero otherwise.
From (\ref{add-eqn-24}), we have for $h \in \bbZ_+^{\leqslant
  \kappa-1}$ and $(l,j) \in \bbF$,
\begin{eqnarray}
\lim_{m' \to\infty}\sum_{i\in\bbD} \varpi(i) p^{m' \kappa + h}(k,i;l,j)
&=&
\lim_{m' \to\infty}\sum_{h'=0}^{\kappa-1} \sum_{i\in\bbD_{h'}} \varpi(i)
p^{m' \kappa + h}(k,i;l,j)
\nonumber
\\
&=& \kappa
\sum_{h'=0}^{\kappa-1} \sum_{i\in\bbD_{h'}} \varpi(i)
\dd{I}_{\{h \equiv q(j) - q(i) \,(\Mod \, \kappa)\}} \cdot \pi(l,j)
\nonumber
\\
&=& \kappa
\sum_{h'=0}^{\kappa-1} \sum_{i\in\bbD_{h'}} \varpi(i)
\dd{I}_{\{h \equiv q(j) - h' \,(\Mod \, \kappa)\}} \cdot \pi(l,j),
\qquad
\label{add-eqn-25}
\end{eqnarray}
where the last equality is due to $q(i) = h'$ for $i\in\bbD_{h'}$.
Note here that $\sum_{i\in\bbD_{h'}} \varpi(i) =
\sum_{(k,i)\in\bbF_{h'}} \pi(k,i)= 1/\kappa$ for any $h' \in
\bbZ_+^{\leqslant \kappa-1}$ (see Theorem 1 in Section I.7 of
\cite{Chun67}). Note also that for any $h \in \bbZ_+^{\leqslant
  \kappa-1}$ and $j\in\bbD$ there exists the unique $h' \in
\bbZ_+^{\leqslant \kappa-1}$ such that $h \equiv q(j) - h'$ $(\Mod
~\kappa)$.  From (\ref{add-eqn-25}), we then obtain for $h \in
\bbZ_+^{\leqslant \kappa-1}$,
\[
\lim_{m' \to\infty}\sum_{i\in\bbD} \varpi(i) p^{m' \kappa + h}(k,i;l,j)
= \pi(l,j),\qquad (l,j) \in \bbF,
\]
which leads to the first limit in (\ref{lim-p^m}). Further since
$\trunc{\vc{P}}_n \prec_d \vc{P} \in \sfBM_d$, it follows from
Proposition~\ref{prop-4}~(c) that $\trunc{\vc{P}}_n$ has the unique
positive recurrent class. As a result, we can prove the second limit
in (\ref{lim-p^m}) in the same way as the proof of the first one.
\end{proof}

To estimate the first two terms on the right hand side of
(\ref{add-eqn-14}), we assume the geometric drift condition for
geometric ergodicity:
\begin{assumpt}\label{assumpt-geo}
There exists a column vector $\vc{v}=(v(k,i))_{(k,i)\in\bbF} \in
\sfBI_d$ such that $\vc{v} \ge \vc{e}$ and for some $\gamma \in (0,1)$
and $b \in (0, \infty)$,
\begin{equation}
\vc{P}\vc{v} \le \gamma \vc{v} + b\dd{1}_0,
\label{ineqn-Pv}
\end{equation}
where $\dd{1}_K=(1_K(k,i))_{(k,i)\in\bbF}$ ($K \in \bbZ_+$) denotes a
column vector such that $1_K(k,i)=1$ for $(k,i) \in \bbF^{\leqslant
  K}$ and $1_K(k,i) = 0$ for $(k,i) \in \bbF \setminus \bbF^{\leqslant
  K}$.
\end{assumpt}

\begin{rem}
Suppose that $\vc{P}$ is irreducible. Since the state space $\bbF$ is
countable, every subset of $\bbF$ includes a {\it small set} and thus
{\it petite set} (see Theorem 5.2.2 and Proposition~5.5.3 in
\cite{Meyn09}). Therefore if the irreducible $\vc{P}$ is aperiodic and
Assumption~\ref{assumpt-geo} holds, then there exist $r \in
(1,\infty)$ and $C \in (0,\infty)$ such that $\sum_{m=1}^{\infty} r^m
\left\| \vc{p}^m(k,i) - \vc{\pi} \right\|_{\svc{v}} \le C v(k,i)$ for
all $(k,i) \in \bbF$, which shows that $\vc{P}$ is
$\vc{v}$-geometrically ergodic (see Theorem 15.0.1 in \cite{Meyn09}).
\end{rem}

The following lemma is an extension of Theorem~2.2 in \cite{Lund96} to
discrete-time BMMCs. 
\begin{lem}\label{lem-two-bounds}
Suppose that $\vc{P} \in \sfBM_d$ and $\vc{P}$ is irreducible. If
Assumption~\ref{assumpt-geo} holds, then for all $k \in \bbZ_+$ and
$m\in\bbN$,
\begin{eqnarray}
\left\|  \vc{p}^m(k,\vc{\varpi}) - \vc{\pi}  \right\|_{\svc{v}}
&\le& 2\gamma^m \left[v(k,\vc{\varpi})(1 - 1_0(k,\vc{\varpi})) 
+ b/(1 - \gamma)
\right],
\label{eqn-13}
\\
\left\|  
\trunc{\vc{p}}_n^m(k,\vc{\varpi}) - \trunc{\vc{\pi}}_n  
\right\|_{\svc{v}}
&\le& 2\gamma^m \left[v(k,\vc{\varpi})(1 - 1_0(k,\vc{\varpi})) 
+ b/(1 - \gamma)
\right],~~\forall n \in \bbN.\qquad
\label{add-eqn-13}
\end{eqnarray}
\end{lem}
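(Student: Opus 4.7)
The plan is to prove (\ref{eqn-13}) and (\ref{add-eqn-13}) via pathwise coupling enabled by block-monotonicity (Lemma~\ref{lem1-discrete-ordering}) combined with the geometric drift in Assumption~\ref{assumpt-geo}; the second inequality will be reduced to the first by checking that $\trunc{\vc{P}}_n$ inherits the structural hypotheses of $\vc{P}$. Consequently, most of the work will be in establishing (\ref{eqn-13}).

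For (\ref{eqn-13}), I would construct on a common probability space a joint coupling of $(X_\nu,J_\nu)$ with initial law concentrated at $X_0=k$ and $J_0\sim\vc{\varpi}$, together with a stationary copy $(X_\nu',J_\nu')$ with $(X_0',J_0')\sim\vc{\pi}$, forcing $J_0=J_0'$; this is consistent because (\ref{eqn-varpi(i)}) shows that the $J$-marginal of $\vc{\pi}$ is $\vc{\varpi}$. Splitting on $\{X_0'\ge k\}$ and its complement, Lemma~\ref{lem1-discrete-ordering} produces coupled trajectories with $J_\nu=J_\nu'$ for all $\nu\in\bbN$ and a pathwise-consistent ordering of $X_\nu$ and $X_\nu'$. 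For any $g$ with $|g|\le\vc{v}$ I would then write $\vc{p}^m(k,\vc{\varpi})g-\vc{\pi}g=\mathsf{E}[g(X_m,J_m)-g(X_m',J_m')]$ and bound the integrand using block-increasingness of $\vc{v}$ together with $J_m=J_m'$. Iterating the drift condition gives $\vc{P}^m\vc{v}\le\gamma^m\vc{v}+b(1-\gamma^m)/(1-\gamma)\,\dd{1}$, and projecting against $\vc{\pi}\vc{P}^m=\vc{\pi}$ yields the auxiliary estimate $\vc{\pi}\vc{v}\le b/(1-\gamma)$, both of which feed into the coupling bound. For the sharper $k=0$ case I would additionally use that $\vc{\varpi}_0\prec_d\vc{\pi}$ (immediate from the construction since $\vc{\varpi}_0$ concentrates at level $0$, so its block-cumulatives are dominated by those of $\vc{\pi}$), which together with $\vc{v}\in\sfBI_d$ gives $\vc{\varpi}_0\vc{v}\le\vc{\pi}\vc{v}$ and accounts for the factor $(1-1_0(k,\vc{\varpi}))$ multiplying $v(k,\vc{\varpi})$.

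For (\ref{add-eqn-13}), I would verify that $\trunc{\vc{P}}_n$ satisfies the hypotheses needed in the proof of (\ref{eqn-13}), and then run the same argument. First, $\trunc{\vc{P}}_n\in\sfBM_d$: by (\ref{def-trunc{p}_n(k,i;l,j)}) the block-cumulatives $\sum_{m\ge l}\trunc{p}_n(k,i;m,j)$ coincide with those of $\vc{P}$ for $l\le n$ and vanish for $l>n$, so block-monotonicity transfers. Second, $\trunc{\vc{P}}_n\prec_d\vc{P}$ (also direct from the construction) together with $\vc{v}\in\sfBI_d$ yields $\trunc{\vc{P}}_n\vc{v}\le\vc{P}\vc{v}\le\gamma\vc{v}+b\dd{1}_0$, so Assumption~\ref{assumpt-geo} holds verbatim for $\trunc{\vc{P}}_n$. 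Third, Proposition~\ref{prop-4}(c) delivers the unique positive recurrent class containing level $0$, and (\ref{eqn-varpi(i)}) ensures that the $J$-marginal of $\trunc{\vc{\pi}}_n$ is again $\vc{\varpi}$. Replacing $(\vc{P},\vc{\pi})$ by $(\trunc{\vc{P}}_n,\trunc{\vc{\pi}}_n)$ in the proof of (\ref{eqn-13}) then yields (\ref{add-eqn-13}), uniformly in $n$.

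The main obstacle will be producing the $\gamma^m$ factor on the $b/(1-\gamma)$ contribution to the bound. A direct coupling estimate of the form $\mathsf{E}|g(X_m,J_m)-g(X_m',J_m')|\le\mathsf{E}\,v(X_m,J_m)+\mathsf{E}\,v(X_m',J_m')=\vc{p}^m(k,\vc{\varpi})\vc{v}+\vc{\pi}\vc{v}$ only yields a bound of order $\gamma^m v(k,\vc{\varpi})+2b/(1-\gamma)$, whose constant part does not decay in $m$ and therefore cannot match the claimed rate. I anticipate that the refinement exploits the shifted function $\vc{u}:=\vc{v}-b/(1-\gamma)\dd{1}$, which satisfies the pure geometric contraction $\vc{P}\vc{u}\le\gamma\vc{u}$ (since $(\vc{P}\vc{v}-\gamma\vc{v})-b\dd{1}\le b\dd{1}_0-b\dd{1}\le\vc{0}$ absorbs the drift constant into the shift), and iterates it along the coupled trajectories with careful bookkeeping of the signed character of $\vc{u}$.
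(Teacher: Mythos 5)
Your overall architecture --- pathwise coupling from Lemma~\ref{lem1-discrete-ordering}, and reduction of (\ref{add-eqn-13}) to (\ref{eqn-13}) via $\trunc{\vc{P}}_n \in \sfBM_d$, $\trunc{\vc{P}}_n\vc{v}\le\vc{P}\vc{v}\le\gamma\vc{v}+b\dd{1}_0$ and Proposition~\ref{prop-4}~(c) --- matches the paper, and your treatment of the second inequality is essentially complete once the first is in hand. The gap is exactly where you flag it: producing the factor $\gamma^m$ on the $b/(1-\gamma)$ contribution. Your proposed remedy does not work. From $\vc{P}\vc{u}\le\gamma\vc{u}$ with $\vc{u}=\vc{v}-b/(1-\gamma)\vc{e}$ you only recover $\vc{P}^m\vc{v}\le\gamma^m\vc{v}+(1-\gamma^m)\,b/(1-\gamma)\,\vc{e}$, whose constant part still does not decay in $m$; and because $\vc{u}$ takes negative values at level $0$ in general, the process $\gamma^{-m}u(X_m,J_m)\dd{I}_{\{\tau_0>m\}}$ is not a supermartingale --- discarding the transitions into level $0$ no longer yields an upper bound when $u(0,\cdot)<0$ --- so the ``signed bookkeeping'' cannot be pushed through.

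The missing idea is the coalescence time. The paper runs a third copy $\{(X^{(0)}_\nu,J^{(0)}_\nu)\}$ started at level $0$ and inserts it into the triangle inequality, so that each of the two resulting differences compares a chain with the level-$0$ copy that it pathwise dominates; the difference of $g$-values vanishes on $\{T_h\le m\}$, where $T_h$ is the time after which the two trajectories agree, and $T_h$ is dominated by the first passage time of the \emph{upper} chain to level $0$. The decay then comes from the supermartingale $M_m=\gamma^{-m}v(X_m,J_m)\dd{I}_{\{\tau_0>m\}}$ --- valid for the original unshifted $\vc{v}\ge\vc{0}$ precisely because away from level $0$ the drift inequality (\ref{ineqn-Pv}) is a pure contraction --- together with Doob's optional sampling theorem, which gives $\EE_{(k,i)}[v(X_m,J_m)\dd{I}_{\{\tau_0>m\}}]\le\gamma^m v(k,i)(1-1_0(k,i))$. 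Averaging this bound over the stationary initial condition and invoking $\vc{\pi}\vc{v}\le b/(1-\gamma)$ is what converts the otherwise non-decaying constant into $\gamma^m\, b/(1-\gamma)$. (Incidentally, the factor $(1-1_0(k,\vc{\varpi}))$ arises from $M_0=v(k,i)\dd{I}_{\{k\ge1\}}$ in this optional-sampling step, not from a dominance relation between the point mass at level $0$ and $\vc{\pi}$.) Without restricting to the non-coalesced event and without the killed supermartingale, the estimate cannot close, so as written the proof of (\ref{eqn-13}) --- and hence of the lemma --- is incomplete.
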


\begin{proof}
We first prove (\ref{eqn-13}). To do this, we consider three copies
$\{(X_{\nu}^{(h)},J_{\nu}^{(h)});\nu\in\bbZ_+\}$ ($h=0,1,2$) of the
BMMC $\{(X_{\nu},J_{\nu});\nu\in\bbZ_+\}$, which are defined on a
common probability space in such a way that
\[
(X_0^{(0)},J_0^{(0)}) =(0,J),  \quad 
(X_0^{(1)},J_0^{(1)}) = (k,J), \quad 
(X_0^{(2)},J_0^{(2)}) = (X,J),
\]
where $k \in \bbZ_+$ and $(X,J)$ denotes a random vector distributed
with $\PP(X = l, S=j) = \pi(l,j)$ for $(l,j) \in \bbF$. According to
the pathwise ordered property of BMMCs (see
Lemma~\ref{lem1-discrete-ordering}), we assume without loss of
generality that
\begin{equation}
X_{\nu}^{(0)} \le X_{\nu}^{(1)}, \quad 
X_{\nu}^{(0)} \le X_{\nu}^{(2)}, \quad
J_{\nu}^{(0)} = J_{\nu}^{(1)} = J_{\nu}^{(2)},
\quad \forall\nu \in \bbZ_+.
\label{eqn-01}
\end{equation}

For simplicity, let
\begin{align*}
\EE_{(k,i)}[\,\, \cdot \,\,] 
&= \EE[~ \cdot \mid X_0=k, J_0=i], 
& (k,i) &\in \bbF,
\\
\EE_{(k,i);(0,j)}[\,\, \cdot \,\,] 
&= \EE[~  \cdot \mid (X_0^{(h)}, 
J_0^{(h)}) = (k,i), (X_0^{(0)},J_0^{(0)}) = (0,j)],
& (k,i) &\in \bbF,~j\in\bbD,
\end{align*}
where $h=1,2$. Further let $\vc{g} =(g(l,j))_{(l,j) \in \bbF}$ denote
a column vector satisfying $|\vc{g}| \le \vc{v}$, i.e., $|g(l,j)| \le
v(l,j)$ for $(l,j)\in\bbF$. It then follows that for $m=1,2,\dots$,
\begin{eqnarray*}
\vc{p}^m(k,\vc{\varpi})\vc{g}
&=& 
\sum_{i \in \bbD}\varpi(i)\sum_{(l,j) \in \bbF}p^m(k,i;l,j)g(l,j)
= \EE\!\left[\EE_{(k,J)}[g(X_m,J_m)] \right],
\label{eqn-04}
\\
\vc{\pi}\vc{g} 
&=& \vc{\pi}\vc{P}^m\vc{g}
= 
\sum_{(k,i) \in \bbF}\pi(k,i)
\sum_{(l,j) \in \bbF}p^m(k,i;l,j)g(l,j)
= \EE[\EE_{(X,J)}[g(X_m,J_m)]]. \quad
\label{eqn-05}
\end{eqnarray*}
Thus by the triangle inequality, we obtain
\begin{eqnarray}
\lefteqn{
|\vc{p}^m(k,\vc{\varpi})\vc{g} - \vc{\pi}\vc{g}|
}
\quad&&
\nonumber
\\
&=& 
\left|\EE\!\left[\EE_{(k,J)}[g(X_m,J_m)] \right]  
- \EE\!\left[\EE_{(X,J)}[g(X_m,J_m)]\right] \right|
\nonumber
\\
&\le& \left|\EE\!\left[\EE_{(k,J);(0,J)}[g(X_m^{(1)},J_m^{(1)})] \right]  
- \EE\!\left[\EE_{(k,J);(0,J)}[g(X_m^{(0)},J_m^{(0)})]\right] \right|
\nonumber
\\
&& {} 
+
\left|\EE\!\left[\EE_{(X,J);(0,J)}[g(X_m^{(2)},J_m^{(2)})] \right]  
- \EE\!\left[\EE_{(X,J);(0,J)}[g(X_m^{(0)},J_m^{(0)})]\right] \right|.
\label{eqn-05'}
\end{eqnarray}

Let $T_h = \inf\{m\in\bbZ_+;
X_{\nu}^{(h)}=X_{\nu}^{(0)},~\forall \nu\ge m\}$ for $h=1,2$.  We
then have
\begin{eqnarray}
g(X_{\nu}^{(1)},J_{\nu}^{(1)}) &=& g(X_{\nu}^{(0)},J_{\nu}^{(0)}), 
\qquad \nu \ge T_1,
\label{eqn-02}
\\
g(X_{\nu}^{(2)},J_{\nu}^{(2)}) &=& g(X_{\nu}^{(0)},J_{\nu}^{(0)}), 
\qquad \nu \ge T_2.
\label{eqn-03}
\end{eqnarray}
Applying (\ref{eqn-02}) and (\ref{eqn-03}) to (\ref{eqn-05'}) and
using $|\vc{g}| \le \vc{v}$ (but not $\vc{P} \in \sfBM_d$) yield
\begin{eqnarray}
\lefteqn{
|\vc{p}^m(k,\vc{\varpi})\vc{g} - \vc{\pi}\vc{g}|
}
\quad&&
\nonumber
\\
&\le& \EE\!\left[\EE_{(k,J);(0,J)}[|g(X_m^{(1)},J_m^{(1)}) - g(X_m^{(0)},J_m^{(0)})| 
\cdot \dd{I}_{\{T_1 > m\}} ]\right] 
\nonumber
\\
&& {} 
+
\EE\!\left[\EE_{(X,J);(0,J)}[|g(X_m^{(2)},J_m^{(2)}) - g(X_m^{(0)},J_m^{(0)})| 
\cdot \dd{I}_{\{T_2 > m\}} ]\right]
\nonumber
\\
&\le& 
\EE\!\left[\EE_{(k,J);(0,J)}[v(X_m^{(1)},J_m^{(1)}) \cdot \dd{I}_{\{T_1 > m\}}] \right]
\nonumber
\\
&& {} 
+ \EE\!\left[\EE_{(k,J);(0,J)}[v(X_m^{(0)},J_m^{(0)}) \cdot \dd{I}_{\{T_1 > m\}}]\right]
\nonumber
\\
&& {} + 
\EE\!\left[\EE_{(X,J);(0,J)}[v(X_m^{(2)},J_m^{(2)}) \cdot \dd{I}_{\{T_2 > m\}}] \right]
\nonumber
\\
&& {} 
+ \EE\!\left[\EE_{(X,J);(0,J)}[v(X_m^{(0)},J_m^{(0)}) \cdot \dd{I}_{\{T_2 > m\}}]\right].
\label{eqn-06a}
\end{eqnarray}
Combining (\ref{eqn-06a}) with (\ref{eqn-01}) and $\vc{v} \in
\sfBI_d$, we obtain for all $|\vc{g}| \le \vc{v}$,
\begin{eqnarray}
|\vc{p}^m(k,\vc{\varpi})\vc{g} - \vc{\pi}\vc{g}|
&\le& 2\EE\!\left[\EE_{(k,J);(0,J)}[v(X_m^{(1)},J_m^{(1)}) \cdot \dd{I}_{\{T_1 > m\}}] \right]
\nonumber
\\
&& {} 
+ 2 \EE\!\left[\EE_{(X,J);(0,J)}[v(X_m^{(2)},J_m^{(2)}) \cdot \dd{I}_{\{T_2 > m\}}] \right].
\label{eqn-06b}
\end{eqnarray}
Further it follows from (\ref{eqn-01}) that $X_m^{(h)} = 0$ ($h=1,2$)
implies $X_{\nu}^{(h)} = X_{\nu}^{(0)}$ for all $\nu \ge m$, which
leads to $T_h \le \inf\{\nu\in\bbZ_+;X_{\nu}^{(h)}=0\}$
($h=1,2$). Thus we have
\begin{eqnarray}
\EE\!\left[\EE_{(k,J);(0,J)}[v(X_m^{(1)},J_m^{(1)}) \cdot \dd{I}_{\{T_1 > m\}}] \right]
&\le& \EE\!\left[\EE_{(k,J)}[v(X_m,J_m) \cdot \dd{I}_{\{\tau_0 > m\}}] \right],
\label{eqn-07}
\\
\EE\!\left[\EE_{(X,J);(0,J)}[v(X_m^{(2)},J_m^{(2)}) \cdot \dd{I}_{\{T_2 > m\}}] \right]
&\le& \EE\!\left[\EE_{(X,J)}[v(X_m,J_m) \cdot \dd{I}_{\{\tau_0 > m\}}] \right],
\quad
\label{eqn-08}
\end{eqnarray}
where $\tau_0 = \inf\{\nu\in\bbZ_+;X_{\nu}=0\}$. Substituting
(\ref{eqn-07}) and (\ref{eqn-08}) into (\ref{eqn-06b}) yields
\begin{eqnarray}
\left\| \vc{p}^m(k,\vc{\varpi}) - \vc{\pi}  \right\|_{\svc{v}}\,
&\le& 2\EE\!\left[\EE_{(k,J)}[v(X_m,J_m) \cdot \dd{I}_{\{\tau_0 > m\}}] \right]
\nonumber
\\
&& {} + 2\EE\!\left[\EE_{(X,J)}[v(X_m,J_m) \cdot \dd{I}_{\{\tau_0 > m\}}] \right].
\label{eqn-09}
\end{eqnarray}

Let $M_m=\gamma^{-m}v(X_m,J_m)\dd{I}_{\{\tau_0 > m\}}$ for $m \in
\bbZ_+$. If $\tau_0 \le m$, $M_{m+1} = M_m = 0$. On the other hand,
suppose that $\tau_0 > m$ and thus $(X_m,J_m) = (k,i) \in \bbN \times
\bbD$ (due to $\{\tau_0 > m\} \subseteq \{X_m \in \bbN\}$). We then
have for $(k,i) \in \bbN \times \bbD$,
\begin{eqnarray*}
\EE[M_{m+1} \mid (X_m,J_m) = (k,i), \tau_0 > m]
&=& \sum_{(l,j)\in\bbN\times\bbD}p(k,i;l,j)\gamma^{-m-1}v(l,j)
\nonumber
\\
&\le& \sum_{(l,j) \in \bbF}p(k,i;l,j)\gamma^{-m-1}v(l,j)
\le \gamma^{-m}v(k,i),
\end{eqnarray*}
where the last inequality follows from (\ref{ineqn-Pv}). Thus
$\{M_m\}$ is a supermartingale.

Let $\{\theta_{\nu};\nu\in \bbZ_+\}$ denote a sequence of stopping
times for $\{M_m;m\in\bbZ_+\}$ such that $0 \le \theta_1 \le \theta_2
\le \cdots$ and $\lim_{\nu\to\infty}\theta_{\nu}=\infty$. Note that
for any $m' \in \bbZ_+$, $\min(m',\theta_{\nu})$ is a stopping time
for $\{M_m;m\in\bbZ_+\}$. It then follows from Doob's optional
sampling theorem that for $(k,i) \in \bbF$,
$\EE_{(k,i)}[M_{\min(m,\theta_{\nu})}] \le \EE_{(k,i)}[M_0]$, i.e.,
\begin{eqnarray*}
\EE_{(k,i)}[\gamma^{-\min(m,\theta_{\nu})}
v(X_{\min(m,\theta_{\nu})},J_{\min(m,\theta_{\nu})})
\dd{I}_{\{\tau_0 > \min(m,\theta_{\nu}) \}}]
 \le v(k,i)(1 - 1_0(k,i)).
\end{eqnarray*}
Thus letting $\nu\to\infty$ and using Fatou's lemma, we have
\begin{equation}
\EE_{(k,i)}[v(X_m,J_m) \dd{I}_{\{\tau_0 > m \}}]
\le \gamma^m  v(k,i)(1 - 1_{0}(k,i)),
\label{eqn-10}
\end{equation}
which leads to
\begin{eqnarray}
\EE\!\left[\EE_{(k,J)}[v(X_m,J_m)\dd{I}_{\{\tau_0 > m \}}] \right]
&=& \sum_{i\in\bbD}\varpi(i)\EE_{(k,i)}[v(X_m,J_m) \dd{I}_{\{\tau_0 > m \}}]
\nonumber
\\
&\le& \gamma^m  v(k,\vc{\varpi})(1 - 1_{0}(k,\vc{\varpi})),
\label{eqn-11}
\end{eqnarray}
where we use $1_0(k,i) = 1_0(k,\vc{\varpi})$ for all $i \in
\bbD$. Note here that pre-multiplying both sides of (\ref{ineqn-Pv})
by $\vc{\pi}$ yields $\vc{\pi}\vc{v} \le b /(1 - \gamma)$, 
from which and (\ref{eqn-10}) we obtain
\begin{equation}
\EE\!\left[\EE_{(X,J)}[v(X_m,J_m) \cdot \dd{I}_{\{\tau_0 > m\}}] \right]
\le \gamma^m \sum_{(k,i)\in\bbF}\pi(k,i)v(k,i)
\le \gamma^m {b \over 1 - \gamma}.
\label{eqn-12}
\end{equation}
Substituting (\ref{eqn-11}) and (\ref{eqn-12}) into (\ref{eqn-09})
yields (\ref{eqn-13}). 

Next we consider (\ref{add-eqn-13}). Since $\vc{P} \in \sfBM_d$, we
have $\trunc{\vc{P}}_n \in \sfBM_d$ and $\trunc{\vc{P}}_n \prec_d
\vc{P}$. Thus since $\vc{P}$ is irreducible and positive recurrent,
Proposition~\ref{prop-4}~(c) implies that $\trunc{\vc{P}}_n$ has the
unique positive recurrent class, which includes the states
$\{(0,i);i\in\bbD\}$. Further it follows from $\vc{v} \in \sfBI_d$,
(\ref{ineqn-Pv}) and Remark~\ref{prop-3} that
\begin{equation}
\trunc{\vc{P}}_n\vc{v} \le \vc{P}\vc{v} \le \gamma\vc{v} + b\dd{1}_0.
\label{ineqn-(n)P_n*v}
\end{equation}
Therefore we can prove (\ref{add-eqn-13}) in the same way as the proof
of (\ref{eqn-13}).
\end{proof}

Combining (\ref{add-eqn-14}) with Lemmas~\ref{appen-lem-1} and
\ref{lem-two-bounds}, we obtain the following theorem.
\begin{thm}\label{thm-main-geo}
Suppose that $\vc{P} \in \sfBM_d$ and $\vc{P}$ is irreducible. If
Assumption~\ref{assumpt-geo} holds, then for all $n \in \bbN$,
\begin{eqnarray}
\left\| \trunc{\vc{\pi}}_n - \vc{\pi} \right\|
&\le& 4\gamma^m {b \over 1-\gamma} 
+ 2m \sum_{i\in\bbD}\trunc{\pi}_n(n,i),\quad \forall m \in \bbN,
\label{geo-bound-1}
\\
\left\| \trunc{\vc{\pi}}_n - \vc{\pi} \right\|
&\le& {b \over 1-\gamma}
\left(
4\gamma^m  + 2m \sum_{i\in\bbD}{1 \over v(n,i)}
\right),\quad \forall m \in \bbN.
\label{geo-bound-2}
\end{eqnarray}
\end{thm}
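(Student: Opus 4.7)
The plan is to start from the triangle inequality (\ref{add-eqn-14}) with $k=0$. Since $1_0(0,\vc{\varpi})=1$ and $\vc{v} \ge \vc{e}$ (so $\|\cdot\| \le \|\cdot\|_{\svc{v}}$), Lemma~\ref{lem-two-bounds} immediately bounds each of the first two terms on the right-hand side by $2\gamma^m b/(1-\gamma)$, contributing a total of $4\gamma^m b/(1-\gamma)$. It remains to control the third term $\|\trunc{\vc{p}}_n^m(0,\vc{\varpi}) - \vc{p}^m(0,\vc{\varpi})\|$.

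For that third term, Lemma~\ref{appen-lem-1} (applied linearly by averaging over $\vc{\varpi}$) gives the upper bound $\sum_{h=0}^{m-1}\sum_{(l,j)}\trunc{p}_n^h(0,\vc{\varpi};l,j)\Delta_n(l,j)$. Since $\trunc{\vc{P}}_n$ is supported on $\bbF^{\leqslant n}$, only $l \le n$ contributes. The key observation is that, for every such $l$,
\[
\Delta_n(l,j) = 2\sum_{l'>n,j'}p(l,j;l',j') \le 2\sum_{l'\ge n,j'}p(l,j;l',j') = 2\sum_{j'}\trunc{p}_n(l,j;n,j'),
\]
by the construction (\ref{def-trunc{p}_n(k,i;l,j)}) of the last-column-block-augmented truncation. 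Substituting and telescoping one step of $\trunc{\vc{P}}_n$ in the $h$-summation collapses the bound to $2\sum_{h=1}^{m}\sum_{j'}\trunc{p}_n^h(0,\vc{\varpi};n,j')$. To eliminate the dependence on $h$, the plan is to use block-monotonicity: $\vc{P}\in\sfBM_d$ implies $\trunc{\vc{P}}_n\in\sfBM_d$, and the initial distribution concentrated at level $0$ with mode marginal $\vc{\varpi}$ is trivially $\prec_d \trunc{\vc{\pi}}_n$; hence Proposition~\ref{prop-2} applied inductively gives $\trunc{\vc{p}}_n^h(0,\vc{\varpi};\cdot) \prec_d \trunc{\vc{\pi}}_n$ for every $h$. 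Examining this domination at the topmost level $n$ yields $\sum_{j'}\trunc{p}_n^h(0,\vc{\varpi};n,j') \le \sum_{j'}\trunc{\pi}_n(n,j')$, which together with the above produces the first bound (\ref{geo-bound-1}).

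To pass from (\ref{geo-bound-1}) to (\ref{geo-bound-2}), the plan is to replace $\sum_i \trunc{\pi}_n(n,i)$ by a quantity involving $\vc{v}$. Using the drift inequality $\trunc{\vc{P}}_n\vc{v} \le \gamma\vc{v} + b\dd{1}_0$ already derived as (\ref{ineqn-(n)P_n*v}) in the proof of Lemma~\ref{lem-two-bounds}, pre-multiplying by $\trunc{\vc{\pi}}_n$ and invoking stationarity gives $\trunc{\vc{\pi}}_n\vc{v} \le b/(1-\gamma)$, and hence the pointwise estimate $\trunc{\pi}_n(n,i) \le b/((1-\gamma)v(n,i))$. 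Inserting this into (\ref{geo-bound-1}) yields (\ref{geo-bound-2}). The main obstacle will be arranging the chain of inequalities that turns $\Delta_n(l,j)$ into a single-step transition of $\trunc{\vc{P}}_n$ into level $n$; once that reduction is made, the stationary drift estimate and the block-wise domination argument close the proof with only routine algebra.
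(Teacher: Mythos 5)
Your proposal is correct and follows essentially the same route as the paper: the same decomposition (\ref{add-eqn-14}) with $k=0$, Lemmas~\ref{appen-lem-1} and \ref{lem-two-bounds} for the first two terms, the block-wise domination $(\vc{\varpi},0,0,\dots)(\trunc{\vc{P}}_n)^h \prec_d \trunc{\vc{\pi}}_n$ (via Proposition~\ref{prop-2} and (\ref{eqn-varpi(i)})) for the third, and the stationary drift estimate $\trunc{\vc{\pi}}_n\vc{v}\le b/(1-\gamma)$ to pass from (\ref{geo-bound-1}) to (\ref{geo-bound-2}). The only, immaterial, difference is the order of two steps: the paper applies the domination to the block-increasing vector $(\Delta_n(l,j))_{(l,j)\in\bbF}$ and then invokes stationarity of $\trunc{\vc{\pi}}_n$, whereas you first convert $\Delta_n$ into a one-step transition into level $n$ and then apply the domination to the level-$n$ mass.
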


\begin{rem}
If $d=1$, Theorem~\ref{thm-main-geo} is reduced to Theorem~4.2 in
\cite{Twee98}.
\end{rem}

\begin{proof}[Proof of Theorem~\ref{thm-main-geo}]
From (\ref{add-eqn-14}) and Lemma~\ref{lem-two-bounds}, we have
\begin{equation}
\left\| \trunc{\vc{\pi}}_n - \vc{\pi} \right\|
\le 4\gamma^m {b \over 1 - \gamma}
+ \left\| \trunc{\vc{p}}_n^m(0,\vc{\varpi}) - \vc{p}^m(0,\vc{\varpi}) \right\|.
\label{eqn-15}
\end{equation}
From Lemma~\ref{appen-lem-1} (which does not require $\vc{P} \in
\sfBM_d$), we obtain for $m \in \bbN$,
\begin{eqnarray}
\left\|
\trunc{\vc{p}}_n^m(0,\vc{\varpi}) - \vc{p}^m(0,\vc{\varpi})
\right\|
&\le& \sum_{i\in\bbD}\varpi(i) \left\|
\trunc{\vc{p}}_n^m(0,i) - \vc{p}^m(0,i)
\right\|
\nonumber
\\
&\le& \sum_{h=0}^{m-1} 
\sum_{(l,j)\in\bbF} 
\left( \sum_{i\in\bbD} \varpi(i)\trunc{p}_n^h(0,i;l,j) \right) \Delta_n(l,j). 
\qquad
\label{eqn-14}
\end{eqnarray}
It follows from (\ref{eqn-varpi(i)}) and $\trunc{\vc{P}}_n \in
\sfBM_d$ that $(\vc{\varpi},0,0,\dots) \prec_d \trunc{\vc{\pi}}_n$ and
$(\trunc{\vc{P}}_n)^h \in \sfBM_d$ for $h \in \bbN$. Thus
Proposition~\ref{prop-2} yields
\begin{equation}
(\vc{\varpi},0,0,\dots)(\trunc{\vc{P}}_n)^h 
\prec_d \trunc{\vc{\pi}}_n(\trunc{\vc{P}}_n)^h 
= \trunc{\vc{\pi}}_n.
\label{add-eqn-01}
\end{equation}
In addition, $\vc{P} \in \sfBM_d$ and (\ref{defn-Delta(l,j)}) imply
that a column vector
$\vec{\vc{\delta}}_n:=(\Delta_n(l,j))_{(l,j)\in\bbF}$ with block size
$d$ is block-increasing, i.e., $\vec{\vc{\delta}}_n \in
\sfBI_d$. Combining this and (\ref{add-eqn-01}) with
Remark~\ref{prop-3}, we have
\[
(\vc{\varpi},0,0,\dots)(\trunc{\vc{P}}_n)^h \vec{\vc{\delta}}_n
\le \trunc{\vc{\pi}}_n\vec{\vc{\delta}}_n.
\]
Applying (\ref{defn-Delta(l,j)}) to the right hand side of the above
inequality, we obtain
\begin{eqnarray}
\lefteqn{
\sum_{(l,j)\in\bbF}
\left( \sum_{i\in\bbD} \varpi(i)\trunc{p}_n^h(0,i;l,j) \right)\Delta_n(l,j)
}
\qquad &&
\nonumber
\\
&\le& 2
\sum_{(l,j)\in\bbF}\trunc{\pi}_n(l,j)
\sum_{l'>n,j'\in\bbD} p(l,j;l',j')
\nonumber
\\
&\le& 2
\sum_{(l,j)\in\bbF}\trunc{\pi}_n(l,j)
\sum_{j'\in\bbD}\trunc{p}_n(l,j;n,j')
=2
\sum_{j'\in\bbD}\trunc{\pi}_n(n,j'),
\label{eqn-16}
\end{eqnarray}
where the second inequality follows from
(\ref{def-trunc{p}_n(k,i;l,j)}) and the last equality follows from
$\trunc{\vc{\pi}}_n \cdot \trunc{\vc{P}}_n=\trunc{\vc{\pi}}_n$.
Substituting (\ref{eqn-16}) into (\ref{eqn-14}) yields
\[
\left\|
\trunc{\vc{p}}_n^m(0,\vc{\varpi}) - \vc{p}^m(0,\vc{\varpi})
\right\|
\le 2m\sum_{j'\in\bbD}\trunc{\pi}_n(n,j'),
\]
from which and (\ref{eqn-15}) we have (\ref{geo-bound-1}).

Next we prove (\ref{geo-bound-2}). Pre-multiplying both sides of 
(\ref{ineqn-(n)P_n*v}) by $\trunc{\vc{\pi}}_n$ and using
$\trunc{\vc{\pi}}_n \cdot \trunc{\vc{P}}_n = \trunc{\vc{\pi}}_n$, we
obtain $\trunc{\vc{\pi}}_n\vc{v} \le b/(1-\gamma)$, 
which leads to
\[
\trunc{\pi}_n(n,i)
\le
{b \over 1-\gamma}{1 \over v(n,i)},
\qquad i \in \bbD.
\]
Substituting this inequality into (\ref{geo-bound-1}) yields
(\ref{geo-bound-2}).
\end{proof}

\section{Extensions of main result}\label{sec-extension}

In this section, we do not necessarily assume that $\vc{P}$ (i.e.,
Markov chain $\{(X_{\nu},J_{\nu});\nu\in\bbZ_+\}$) is block-monotone,
but assume that $\vc{P}$ is block-wise dominated by an irreducible and
positive recurrent stochastic matrix in $\sfBM_d$, which is denoted by
$\widetilde{\vc{P}}=(\widetilde{p}(k,i;l,j))_{(k,i),(l,j)\in\bbF}$.
Let $\widetilde{\vc{\pi}}=(\widetilde{\pi}(k,i))_{(k,i)\in\bbF}$
denote the stationary probability vector of $\widetilde{\vc{P}}$. It
follows from $\vc{P} \prec_d \widetilde{\vc{P}} \in \sfBM_d$ and
Proposition~\ref{prop-4}~(c) that
$\vc{\pi}\prec_d\widetilde{\vc{\pi}}$ and thus
\begin{equation}
\sum_{k=0}^{\infty}\widetilde{\pi}(k,i)= \sum_{k=0}^{\infty}\pi(k,i)
= \varpi(i),
\qquad i \in \bbD.
\label{eqn-tilde{J}=J}
\end{equation}

Let $\{(\widetilde{X}_{\nu},\widetilde{J}_{\nu});\nu\in\bbZ_+\}$
denote a BMMC with state space $\bbF$ and transition probability
matrix $\widetilde{\vc{P}}$. Since $\vc{P} \prec_d \widetilde{\vc{P}}
\in \sfBM_d$, we can assume (without loss of generality) that the
pathwise ordering of $ \{(\widetilde{X}_{\nu},\widetilde{J}_{\nu})\}$
and $\{(X_{\nu},J_{\nu})\}$ holds, i.e., if $X_0 \le \widetilde{X}_0$
and $J_0=\widetilde{J}_0$, then $X_{\nu} \le \widetilde{X}_{\nu}$ and
$J_{\nu}= \widetilde{J}_{\nu}$ for all $n \in \bbN$ (see
Lemma~\ref{lem2-discrete-ordering}).

The following result is an extension of Theorem~5.1 in \cite{Twee98}.
\begin{thm}\label{thm-extended-geo}
Suppose that (i) $\widetilde{\vc{P}} \in \sfBM_d$ and
$\widetilde{\vc{P}}$ is irreducible; (ii) $\vc{P}
\prec_d \widetilde{\vc{P}}$; and (iii) there exists a column vector
$\vc{v}=(v(k,i))_{(k,i)\in\bbF} \in \sfBI_d$ such that $\vc{v} \ge
\vc{e}$ and
\begin{equation}
\widetilde{\vc{P}}\vc{v} \le \gamma \vc{v} + b\dd{1}_0,
\label{ineqn-hat{P}v}
\end{equation}
for some $\gamma \in (0,1)$ and $b \in (0, \infty)$. Under these
conditions, (\ref{geo-bound-2}) holds for all $n \in \bbN$.
\end{thm}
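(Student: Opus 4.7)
The plan is to retrace the proof of Theorem~\ref{thm-main-geo}, systematically substituting the unavailable block-monotonicity of $\vc{P}$ by the available block-monotonicity of $\widetilde{\vc{P}}$ through the dominance $\vc{P} \prec_d \widetilde{\vc{P}}$. First, I would transfer the drift condition: since $\vc{v} \in \sfBI_d$, Remark~\ref{prop-3} and (\ref{ineqn-hat{P}v}) give $\vc{P}\vc{v} \le \widetilde{\vc{P}}\vc{v} \le \gamma\vc{v} + b\dd{1}_0$, and a direct inspection of (\ref{def-trunc{p}_n(k,i;l,j)}) shows $\trunc{\vc{P}}_n \prec_d \vc{P}$ (tail block-sums match for $l \le n$ and vanish otherwise), so $\trunc{\vc{P}}_n\vc{v} \le \gamma\vc{v} + b\dd{1}_0$ as well. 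Pre-multiplying by the respective stationary vectors then yields $\vc{\pi}\vc{v},\ \trunc{\vc{\pi}}_n\vc{v},\ \trunc{\widetilde{\vc{\pi}}}_n\vc{v} \le b/(1-\gamma)$. Because $\trunc{\vc{P}}_n \prec_d \trunc{\widetilde{\vc{P}}}_n \in \sfBM_d$, Proposition~\ref{prop-4}(c) produces a unique $\trunc{\vc{\pi}}_n$ with $\trunc{\vc{\pi}}_n \prec_d \trunc{\widetilde{\vc{\pi}}}_n$, so in particular $\sum_{k\in\bbZ_+}\trunc{\pi}_n(k,i) = \varpi(i)$ for $i \in \bbD$ (as in (\ref{eqn-tilde{J}=J})).

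Starting from (\ref{add-eqn-14}), the first two terms on its right-hand side need an analog of Lemma~\ref{lem-two-bounds} in which $\vc{P} \in \sfBM_d$ is no longer at our disposal. The remedy, in the spirit of Tweedie~\cite{Twee98} Theorem~5.1, is to couple each $\vc{P}$-chain (resp.\ $\trunc{\vc{P}}_n$-chain) with a $\widetilde{\vc{P}}$-majorant (resp.\ $\trunc{\widetilde{\vc{P}}}_n$-majorant) via Lemma~\ref{lem2-discrete-ordering} so that $X_\nu \le \widetilde{X}_\nu$ and $J_\nu = \widetilde{J}_\nu$, and to use the three-copy pathwise-ordered coupling of the BMMC $\widetilde{\vc{P}}$ (resp.\ $\trunc{\widetilde{\vc{P}}}_n$) guaranteed by Lemma~\ref{lem1-discrete-ordering}. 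The supermartingale argument giving (\ref{eqn-10}) uses only the drift $\vc{P}\vc{v} \le \gamma\vc{v} + b\dd{1}_0$ and so holds for $\vc{P}$ directly, and the bounds $\vc{\pi}\vc{v},\ \trunc{\vc{\pi}}_n\vc{v} \le b/(1-\gamma)$ were already established. Combined with $v(X_\nu^{(h)},J_\nu^{(h)}) \le v(\widetilde{X}_\nu^{(h)},\widetilde{J}_\nu^{(h)})$ (from $\vc{v} \in \sfBI_d$), the coupling-time analysis of Lemma~\ref{lem-two-bounds} carries through and produces
\[
\|\vc{p}^m(0,\vc{\varpi}) - \vc{\pi}\|_{\svc{v}} + \|\trunc{\vc{p}}_n^m(0,\vc{\varpi}) - \trunc{\vc{\pi}}_n\|_{\svc{v}} \le 4\gamma^m\frac{b}{1-\gamma}.
\]
Pushing Lemma~\ref{lem-two-bounds} through this $\widetilde{\vc{P}}$-mediated coupling is the main obstacle.

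For the third term of (\ref{add-eqn-14}), Lemma~\ref{appen-lem-1} does not require block-monotonicity and still produces (\ref{eqn-14}). The step (\ref{eqn-16}) used $\vec{\vc{\delta}}_n \in \sfBI_d$ (via $\vc{P} \in \sfBM_d$) and $(\vc{\varpi},0,0,\dots)(\trunc{\vc{P}}_n)^h \prec_d \trunc{\vc{\pi}}_n$ (via $\trunc{\vc{P}}_n \in \sfBM_d$); both are replaced by $\widetilde{\vc{P}}$-analogs. Namely, $\vec{\vc{\delta}}_n \le \widetilde{\vc{\delta}}_n := 2(\sum_{l'>n,j'\in\bbD}\widetilde{p}(l,j;l',j'))_{(l,j)\in\bbF}$ by $\vc{P} \prec_d \widetilde{\vc{P}}$, with $\widetilde{\vc{\delta}}_n \in \sfBI_d$ since $\widetilde{\vc{P}} \in \sfBM_d$; and $(\vc{\varpi},0,0,\dots)(\trunc{\vc{P}}_n)^h \prec_d (\vc{\varpi},0,0,\dots)(\trunc{\widetilde{\vc{P}}}_n)^h \prec_d \trunc{\widetilde{\vc{\pi}}}_n$, where the first dominance uses Proposition~\ref{prop-4}(b) applied to $\trunc{\vc{P}}_n \prec_d \trunc{\widetilde{\vc{P}}}_n$ and the second follows by iterating Proposition~\ref{prop-2} through $\trunc{\widetilde{\vc{P}}}_n \in \sfBM_d$ starting from $(\vc{\varpi},0,0,\dots) \prec_d \trunc{\widetilde{\vc{\pi}}}_n$. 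The stationarity $\trunc{\widetilde{\vc{\pi}}}_n\trunc{\widetilde{\vc{P}}}_n = \trunc{\widetilde{\vc{\pi}}}_n$ and $\trunc{\widetilde{\vc{\pi}}}_n\vc{v} \le b/(1-\gamma)$ then yield
\[
\|\trunc{\vc{p}}_n^m(0,\vc{\varpi}) - \vc{p}^m(0,\vc{\varpi})\| \le 2m\sum_{j'\in\bbD}\trunc{\widetilde{\pi}}_n(n,j') \le \frac{2mb}{1-\gamma}\sum_{j'\in\bbD}\frac{1}{v(n,j')}.
\]
Adding this to the bound of the previous paragraph gives (\ref{geo-bound-2}).
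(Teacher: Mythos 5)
Your proposal follows essentially the same route as the paper's proof: the same decomposition (\ref{add-eqn-14}), the same $\widetilde{\vc{P}}$-mediated three-copy coupling (Lemmas~\ref{lem1-discrete-ordering} and \ref{lem2-discrete-ordering}) for the first two terms, and the same replacement of $\Delta_n$ by $\widetilde{\Delta}_n$ together with $(\trunc{\vc{P}}_n)^h \prec_d (\trunc{\widetilde{\vc{P}}}_n)^h$ and $\trunc{\widetilde{\vc{\pi}}}_n\vc{v} \le b/(1-\gamma)$ for the third. One small correction: the supermartingale argument and the stationary terminal bound must be run on the majorant chains (i.e.\ using $\widetilde{\vc{P}}\vc{v}\le\gamma\vc{v}+b\dd{1}_0$ and $\widetilde{\vc{\pi}}\vc{v}\le b/(1-\gamma)$ rather than ``for $\vc{P}$ directly'' with $\vc{\pi}\vc{v}$), since without $\vc{P}\in\sfBM_d$ the coalescence time $T_h$ is dominated only by the hitting time of level $0$ by $\widetilde{X}^{(h)}$, not by $X^{(h)}$ --- which is precisely what your inequality $v(X_\nu^{(h)},J_\nu^{(h)})\le v(\widetilde{X}_\nu^{(h)},\widetilde{J}_\nu^{(h)})$ is there to arrange.
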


\begin{proof}
We first prove the two bounds (\ref{eqn-13}) and (\ref{add-eqn-13}).
Let $(X,J)$ and $(\widetilde{X},\widetilde{J})$ denote two random
vectors on a probability space $(\Omega,\calF,\PP)$ such that
$\PP(\widetilde{X}=k, \widetilde{J}=i) = \widetilde{\pi}(k,i)$ for
$(k,i) \in \bbF$. Note here that since $\vc{\pi} \prec_d
\widetilde{\vc{\pi}}$, $\sum_{l=k}^{\infty}\pi(l,i)/\varpi(i) \le
\sum_{l=k}^{\infty}\widetilde{\pi}(l,i)/\varpi(i)$ for
$(k,i)\in\bbF$. According to this inequality and
(\ref{eqn-tilde{J}=J}), we can assume that $X \le \widetilde{X}$ and
$J = \widetilde{J}$ (see Theorem 1.2.4 in \cite{Mull02}).  We then
introduce the copies
$\{(\widetilde{X}_{\nu}^{(h)},\widetilde{J}_{\nu}^{(h)})\}$ and
$\{(X_{\nu}^{(h)},J_{\nu}^{(h)})\}$ ($h=0,1,2$) of the Markov chains
$\{(\widetilde{X}_{\nu},\widetilde{J}_{\nu})\}$ and
$\{(X_{\nu},J_{\nu})\}$, respectively, on the common probability space
$(\Omega,\calF,\PP)$, where
\begin{eqnarray*}
&&
(\widetilde{X}_0^{(0)},\widetilde{J}_0^{(0)}) =(0,\widetilde{J}),  \quad 
(\widetilde{X}_0^{(1)},\widetilde{J}_0^{(1)}) = (k,\widetilde{J}), \quad 
(\widetilde{X}_0^{(2)},\widetilde{J}_0^{(2)}) = (\widetilde{X},\widetilde{J}),
\\
&&
(X_0^{(0)},J_0^{(0)}) =(0,J),  \quad 
(X_0^{(1)},J_0^{(1)}) = (k,J), \quad 
(X_0^{(2)},J_0^{(2)}) = (X,J).
\end{eqnarray*}
From the pathwise
ordering of $\{(\widetilde{X}_{\nu},\widetilde{J}_{\nu})\}$ and
$\{(X_{\nu},J_{\nu})\}$, we have for $h=0,1,2$,
\begin{equation}
X_{\nu}^{(h)} \le \widetilde{X}_{\nu}^{(h)},\quad
J_{\nu}^{(h)} = \widetilde{J}_{\nu}^{(h)},\quad \forall \nu\in\bbZ_+.
\label{ordering-01}
\end{equation}
In addition, by the pathwise ordered property of $\widetilde{\vc{P}} \in
\sfBM_d$ (see Lemma~\ref{lem1-discrete-ordering}), we assume that
\begin{equation}
\widetilde{X}_{\nu}^{(0)} \le \widetilde{X}_{\nu}^{(1)}, \quad 
\widetilde{X}_{\nu}^{(0)} \le \widetilde{X}_{\nu}^{(2)}, \quad
\widetilde{J}_{\nu}^{(0)} = \widetilde{J}_{\nu}^{(1)} = \widetilde{J}_{\nu}^{(2)},
\quad \forall \nu \in \bbZ_+.
\label{ordering-02}
\end{equation}

Let $\vc{g} =(g(l,j))_{(l,j) \in \bbF}$ denote a column vector
satisfying $|\vc{g}| \le \vc{v}$. It then follows that (\ref{eqn-06a})
holds under the assumptions of Theorem~\ref{thm-extended-geo} because
(\ref{eqn-06a}) does not require that $\{(X_{\nu},J_{\nu})\}$ is block
monotone. Further applying (\ref{ordering-01}), (\ref{ordering-02})
and $\vc{v} \in \sfBI_d$ to (\ref{eqn-06a}), we obtain for all
$|\vc{g}| \le \vc{v}$,
\begin{eqnarray}
|\vc{p}^m(k,\vc{\varpi})\vc{g} - \vc{\pi}\vc{g}|
&\le& 2\EE\!\left[
\EE_{(k,\widetilde{J});(0,\widetilde{J})}
[v(\widetilde{X}_m^{(1)},\widetilde{J}_m^{(1)}) \cdot \dd{I}_{\{T_1 > m\}}] 
\right]
\nonumber
\\
&& {} 
+ 2 \EE\!\left[
\EE_{(\widetilde{X},\widetilde{J});(0,\widetilde{J})}
[v(\widetilde{X}_m^{(2)},\widetilde{J}_m^{(2)}) \cdot \dd{I}_{\{T_2 > m\}}] 
\right],
\label{eqn-17}
\end{eqnarray}
where $T_h = \inf\{m\in\bbZ_+;
X_{\nu}^{(h)}=X_{\nu}^{(0)}~(\forall \nu\ge m)\}$ for $h=1,2$.

It follows from (\ref{ordering-01}) and (\ref{ordering-02}) that for
each $h \in \{1,2\}$, $\widetilde{X}_m^{(h)} = 0$ implies $X_m^{(h)} =
X_m^{(0)} = 0$ and thus $X_{\nu}^{(h)} = X_{\nu}^{(0)}$ for all $\nu
\ge m$, which leads to $T_h \le \inf\{\nu\in\bbZ_+;
\widetilde{X}_{\nu}^{(h)}=0\}$. Therefore from (\ref{eqn-17}), we can
obtain the following inequality (see the derivation of (\ref{eqn-09})
from (\ref{eqn-06b})):
\begin{eqnarray}
\left\|\vc{p}^m(k,\vc{\varpi}) - \vc{\pi}\right\|_{\svc{v}}
&\le& 2\EE\!\left[\EE_{(k,\widetilde{J})}[v(\widetilde{X}_m,\widetilde{J}_m) 
\cdot \dd{I}_{\{\widetilde{\tau}_0 > m\}}] 
\right]
\nonumber
\\
&& {}
+   2\EE\!\left[\EE_{(\widetilde{X},\widetilde{J})}[v(\widetilde{X}_m,\widetilde{J}_m) 
\cdot \dd{I}_{\{\widetilde{\tau}_0 > m\}}] \right],
\label{eqn-21}
\end{eqnarray}
where $\widetilde{\tau}_0 = \inf\{\nu\in\bbZ_+;\widetilde{X}_{\nu} =
0\}$. Further, following the discussion after (\ref{eqn-09}), we can
show that for all $k \in \bbZ_+$ and $m \in \bbN$,
\begin{eqnarray*}
\left\| \vc{p}^m(k,\vc{\varpi}) - \vc{\pi} \right\|_{\svc{v}}
&\le& 2\gamma^m \left[v(k,\vc{\varpi})(1 - 1_0(k,\vc{\varpi})) 
+ b/(1 - \gamma)
\right],
\\
\left\|
\trunc{\vc{p}}_n^m(k,\vc{\varpi}) - \trunc{\vc{\pi}}_n  
\right\|_{\svc{v}}
&\le& 2\gamma^m \left[v(k,\vc{\varpi})(1 - 1_0(k,\vc{\varpi})) 
+ b/(1 - \gamma)
\right],\quad \forall n \in \bbN. 
\end{eqnarray*}
Consequently, we obtain the two bounds (\ref{eqn-13}) and
(\ref{add-eqn-13}).

It remains to prove that
\begin{equation}
\left\| \trunc{\vc{p}}_n^m(0,\vc{\varpi}) - \vc{p}^m(0,\vc{\varpi}) \right\|
\le {2mb \over 1 - \gamma}\sum_{i\in\bbD}{1 \over v(n,i)}.
\label{add-eqn-19}
\end{equation}
Let $\widetilde{\Delta}_n(l,j) =
2\sum_{l'>n,j'\in\bbD}\widetilde{p}(l,j;l',j')$ for
$(l,j)\in\bbF$. Since $\vc{P} \prec_d \widetilde{\vc{P}}$, we have
$\Delta_n(l,j) \le \widetilde{\Delta}_n(l,j)$ for $(l,j)\in\bbF$. Note
here that (\ref{eqn-14}) still holds and thus
\begin{eqnarray}
\left\|
\trunc{\vc{p}}_n^m(0,\vc{\varpi}) - \vc{p}^m(0,\vc{\varpi})
\right\|
&\le& \sum_{h=0}^{m-1} 
\sum_{(l,j)\in\bbF} \left( \sum_{i\in\bbD} \varpi(i)\trunc{p}_n^h(0,i;l,j) \right)
\widetilde{\Delta}_n(l,j). \qquad
\label{add-eqn-17}
\end{eqnarray}

We now define $\trunc{\widetilde{\vc{P}}}_n$ as the
last-column-block-augmented first-$n$-block-column truncation of
$\widetilde{\vc{P}}$ and $\trunc{\widetilde{\vc{\pi}}}_n =
(\trunc{\widetilde{\pi}}_n(k,i))_{(n,i)\in\bbF}$ as the stationary
distribution of $\trunc{\widetilde{\vc{P}}}_n$.  We also define
$\trunc{\widetilde{\vc{p}}}_n^m(k,i)
=(\trunc{\widetilde{p}}_n^m(k,i;l,j))_{(l,j)\in\bbF}$ as a probability
vector such that $\trunc{\widetilde{p}}_n^m(k,i;l,j)$ represents the
$(k,i;l,j)$th element of $(\trunc{\widetilde{\vc{P}}}_n)^m$.  It then
follows from $\trunc{\vc{P}}_n \prec_d \trunc{\widetilde{\vc{P}}}_n$
and Proposition~\ref{prop-4}~(b) that $(\trunc{\vc{P}}_n)^h \prec_d
(\trunc{\widetilde{\vc{P}}}_n)^h$ for $h \in \bbN$. Therefore
Remark~\ref{prop-3} and $(\widetilde{\Delta}_n(l,j))_{(l,j)\in\bbF}
\in \sfBI_d$ (due to $\widetilde{\vc{P}} \in \sfBM_d$) yield
\begin{eqnarray}
\sum_{(l,j)\in\bbF} \trunc{p}_n^h(0,i;l,j) 
\widetilde{\Delta}_n(l,j)
&\le& \sum_{(l,j)\in\bbF} \trunc{\widetilde{p}}_n^h(0,i;l,j)
\widetilde{\Delta}_n(l,j).
\label{add-eqn-18}
\end{eqnarray}
Substituting (\ref{add-eqn-18}) into (\ref{add-eqn-17}), we have
\begin{eqnarray*}
\left\|
\trunc{\vc{p}}_n^m(0,\vc{\varpi}) - \vc{p}^m(0,\vc{\varpi})
\right\|
&\le& \sum_{h=0}^{m-1} 
\sum_{(l,j)\in\bbF} \left( \sum_{i\in\bbD} \varpi(i)\trunc{\widetilde{p}}_n^h(0,i;l,j) \right)
\widetilde{\Delta}_n(l,j).
\end{eqnarray*}
In addition, since $\trunc{\widetilde{\vc{P}}}_n \prec_d
\widetilde{\vc{P}} \in \sfBM_d$, Proposition~\ref{prop-4}~(c) implies
that $\trunc{\widetilde{\vc{\pi}}}_n \prec_d \widetilde{\vc{\pi}}$ and
thus
$\sum_{k=0}^{\infty}\trunc{\widetilde{\pi}}_n(k,i)=\sum_{k=0}^{\infty}\widetilde{\pi}(k,i)$
for $i \in \bbD$. Combining this with (\ref{eqn-tilde{J}=J}), we have
$\varpi(i) = \sum_{k=0}^{\infty}\trunc{\widetilde{\pi}}_n(k,i)$ for $i
\in \bbD$.  As a result, according to the discussion following
(\ref{eqn-14}) in the proof of Theorem~\ref{thm-main-geo}, we can
prove that
\[
\left\|
\trunc{\vc{p}}_n^m(0,\vc{\varpi}) - \vc{p}^m(0,\vc{\varpi})
\right\|
\le 2m\sum_{i\in\bbD}\trunc{\widetilde{\pi}}_n(n,i)
\le
{2mb \over 1-\gamma}\sum_{i\in\bbD}{1 \over v(n,i)}.
\]
\end{proof}

We can relax (\ref{ineqn-hat{P}v}) if the direct path to the states
$\{(0,i);i\in\bbD\}$ is enough ``large".
\begin{thm}\label{thm-extended-geo2}
Suppose that conditions (i) and (ii) of Theorem~\ref{thm-extended-geo}
are satisfied. Further suppose that there exists a column vector
$\vc{v}'=(v'(k,i))_{(k,i)\in\bbF} \in \sfBI_d$ such that $\vc{v}' \ge
\vc{e}$ and for some $\gamma' \in (0,1)$, $b' \in (0, \infty)$ and $K
\in \bbZ_+$,
\begin{eqnarray}
\widetilde{\vc{P}}\vc{v}' 
&\le& \gamma' \vc{v}' + b'\dd{1}_K,
\label{ineqn-hat{P}v'}
\\
\widetilde{\vc{P}}(K;0)\vc{e} &>& \vc{0},
\label{ineqn-hat{P}(K,0)e}
\end{eqnarray}
where $\widetilde{\vc{P}}(k;l)$ ($k,l\in\bbZ_+$) denotes a $d \times
d$ matrix such that $\widetilde{\vc{P}}(k;l) =
(\widetilde{p}(k,i;l,j))_{(i,j)\in\bbD}$.
Under these conditions, (\ref{geo-bound-2}) holds for all $n \in
\bbN$, where
\begin{eqnarray}
\gamma &=& {\gamma' + B \over 1 + B};
\label{defn-lambda}
\\
b &=& b' + B;
\label{defn-b}
\\
v(k,i) &=& 
\left\{
\begin{array}{ll}
v'(0,i), & k=0,~i\in\bbD,
\\
v'(k,i) + B, & k\in\bbN,~i\in\bbD;~\mbox{and}
\end{array}
\right.
\label{defn-v'}
\\
B &\in& (0,\infty)~\mbox{such that}~ B\cdot\widetilde{\vc{P}}(K;0)\vc{e} \ge b'\vc{e}.
\label{defn-B}
\end{eqnarray}
\end{thm}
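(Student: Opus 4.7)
The plan is to reduce this to Theorem~\ref{thm-extended-geo} by verifying that the column vector $\vc{v}$ defined in (\ref{defn-v'}), together with the constants $\gamma$ and $b$ given in (\ref{defn-lambda}) and (\ref{defn-b}), satisfies the hypotheses on $\vc{v}$ in Theorem~\ref{thm-extended-geo}. Once the drift inequality $\widetilde{\vc{P}}\vc{v} \le \gamma\vc{v} + b\dd{1}_0$ is established under conditions (i)--(iii) of the present theorem, condition~(iii) of Theorem~\ref{thm-extended-geo} is in force and its conclusion (\ref{geo-bound-2}) applies verbatim.

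First I would check the structural properties of $\vc{v}$. Writing $\vc{v} = \vc{v}' + B(\vc{e} - \dd{1}_0)$, it is immediate that $\vc{v} \ge \vc{e}$ (since $\vc{v}' \ge \vc{e}$ and $B > 0$), and a direct check of $v(k,i) \le v(k+1,i)$ in the two subcases $k=0$ and $k\ge 1$ shows $\vc{v} \in \sfBI_d$, using that $\vc{v}' \in \sfBI_d$. Also note that $\gamma - \gamma' = B(1-\gamma')/(1+B) > 0$ and $1-\gamma = (1-\gamma')/(1+B)$, so the key algebraic identity $(\gamma - \gamma')\cdot 1 = B(1-\gamma)$ holds; this identity is what drives the computation.

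Next I would verify the drift inequality $\widetilde{\vc{P}}\vc{v} \le \gamma\vc{v} + b\dd{1}_0$ by splitting on the level $k$. Observe the identity
\[
(\widetilde{\vc{P}}\vc{v})(k,i) = (\widetilde{\vc{P}}\vc{v}')(k,i) + B\Bigl(1 - \sum_{j\in\bbD}\widetilde{p}(k,i;0,j)\Bigr),
\]
which follows from $\vc{v}-\vc{v}' = B(\vc{e}-\dd{1}_0)$ and $\widetilde{\vc{P}}\vc{e} = \vc{e}$. Combined with (\ref{ineqn-hat{P}v'}), this reduces the target inequality to a pointwise estimate involving $v'(k,i)$, $\dd{1}_K(k,i)$, and $\sum_j \widetilde{p}(k,i;0,j)$. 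For $k=0$ the claim is trivial because $\gamma > \gamma'$ and both error terms have the correct sign; for $k>K$ the inequality reduces via $v'(k,i)\ge 1$ and the identity above to $(\gamma-\gamma')v'(k,i) \ge B(1-\gamma)$, which holds with equality at $v'(k,i)=1$.

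The main obstacle is the intermediate range $1\le k\le K$, where $\dd{1}_K(k,i)=1$ contributes the extra $b'$ term on the right of (\ref{ineqn-hat{P}v'}). Here I would invoke block-monotonicity of $\widetilde{\vc{P}}$: Definition~\ref{defn-BM} with $l=1$ implies that $\sum_{j\in\bbD}\widetilde{p}(k,i;0,j) = \psi(i,\bbD) - \sum_{m\ge 1,j}\widetilde{p}(k,i;m,j)$ is nonincreasing in $k$, so condition (\ref{defn-B}) at level $K$ propagates downward to give $B\sum_j \widetilde{p}(k,i;0,j) \ge b'$ for every $1 \le k \le K$. Plugging this into the pointwise inequality cancels the troublesome $b'$ term and again leaves the tight estimate $(\gamma-\gamma')v'(k,i) \ge B(1-\gamma)$, which holds as before. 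With the drift inequality established, Theorem~\ref{thm-extended-geo} applied to $(\vc{v},\gamma,b)$ yields (\ref{geo-bound-2}).
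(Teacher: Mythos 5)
Your proposal is correct and follows essentially the same route as the paper: both reduce to verifying the drift inequality $\widetilde{\vc{P}}\vc{v} \le \gamma\vc{v} + b\dd{1}_0$ for the shifted vector $\vc{v}=\vc{v}'+B(\vc{e}-\dd{1}_0)$ via the same three-way split on $k$, the same use of block-monotonicity to propagate (\ref{defn-B}) from level $K$ down to levels $1,\dots,K$, and the same algebraic fact (your identity $\gamma-\gamma'=B(1-\gamma)$ is exactly the paper's $\sup_{x\ge 1}(\gamma'x+B)/(x+B)=\gamma$). The only cosmetic difference is that you argue pointwise where the paper works with block rows $\widetilde{\vc{P}}(k;l)$.
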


\begin{rem}
The condition (\ref{ineqn-hat{P}(K,0)e}) ensures that there exists
some $B\in (0,\infty)$ satisfying (\ref{defn-B}). Further since
$\widetilde{\vc{P}} \in \sfBM_d$, (\ref{ineqn-hat{P}(K,0)e}) implies
$\widetilde{\vc{P}}(k;0)\vc{e} > \vc{0}$ for all $k=0,1,\dots,K$.
\end{rem}

\begin{proof}[Proof of Theorem~\ref{thm-extended-geo2}]
According to Theorem~\ref{thm-extended-geo}, it suffices to prove that
(\ref{ineqn-hat{P}v}) holds for some $\gamma \in (0,1)$, $b \in (0,
\infty)$ and $\vc{v} \in \sfBI_d$ with $\vc{v} \ge \vc{e}$. Let
$\vc{v}(k)$ and $\vc{v}'(k)$ ($k\in\bbZ_+$) denote $d \times 1$
vectors such that $\vc{v}(k)=(v(k,i))_{i\in\bbD}$ and
$\vc{v}'(k)=(v'(k,i))_{i\in\bbD}$. Clearly,
$\vc{v}=(\vc{v}(0)^{\rmT},\vc{v}(1)^{\rmT},\dots)^{\rmT}$ and
$\vc{v}'=(\vc{v}'(0)^{\rmT},\vc{v}'(1)^{\rmT},\dots)^{\rmT}$, where
the superscript ``$\rmT$" represents the transpose operator. Thus
(\ref{ineqn-hat{P}v'}), (\ref{defn-b}) and (\ref{defn-v'}) yield
\begin{eqnarray}
\sum_{l=0}^{\infty}
\widetilde{\vc{P}}(0;l)\vc{v}(l)
&\le& \sum_{l=0}^{\infty}\widetilde{\vc{P}}(0;l)\vc{v}'(l) + B\vc{e}
\le \gamma'\vc{v}'(0) + (b'+B)\vc{e}
\nonumber
\\
&=& \gamma'\vc{v}(0) + b\vc{e}
\le \gamma\vc{v}(0) + b\vc{e},
\label{add-eqn-07}
\end{eqnarray}
where the last inequality follows from $\gamma \ge \gamma'$ (due to
(\ref{defn-lambda})).

Further since $\widetilde{\vc{P}} \in \sfBM_d$,
$\sum_{l=1}^{\infty}\widetilde{\vc{P}}(k;l) \le
\sum_{l=1}^{\infty}\widetilde{\vc{P}}(K;l)$ for $k=1,2,\dots,K$. From
this and (\ref{defn-v'}), we have for $k=1,2,\dots,K$,
\begin{eqnarray}
\sum_{l=0}^{\infty}
\widetilde{\vc{P}}(k;l)\vc{v}(l)
&\le& \sum_{l=0}^{\infty}\widetilde{\vc{P}}(k;l)\vc{v}'(l)
 + B\sum_{l=1}^{\infty}\widetilde{\vc{P}}(K;l)\vc{e}
\nonumber
\\
&=& \sum_{l=0}^{\infty}\widetilde{\vc{P}}(k;l)\vc{v}'(l)
 + B\{\vc{e} - \widetilde{\vc{P}}(K;0)\vc{e}\}.
\label{add-eqn-10}
\end{eqnarray}
Applying (\ref{ineqn-hat{P}v'}) and (\ref{defn-B}) to the right hand
side of (\ref{add-eqn-10}), we obtain for $k=1,2,\dots,K$,
\begin{eqnarray}
\sum_{l=0}^{\infty}
\widetilde{\vc{P}}(k;l)\vc{v}(l)
&\le& \gamma'\vc{v}'(k) + B\vc{e} 
+ \{ b'\vc{e} - B\widetilde{\vc{P}}(K;0)\vc{e} \}
\le \gamma'\vc{v}'(k) + B\vc{e}.
\label{add-eqn-06}
\end{eqnarray}
Note here that (\ref{defn-lambda}) implies $\sup_{x\ge1}(\gamma'
x+B)/(x+B) = \gamma$. Thus since $\vc{v}' \ge \vc{e}$, we have
$\gamma' v'(k,i) + B \le \gamma (v'(k,i) + B)$.  Combining this with
(\ref{defn-v'}) yields
\begin{equation}
\gamma'\vc{v}'(k) + B\vc{e}
\le \gamma(\vc{v}'(k) + B\vc{e}) 
= \gamma \vc{v}(k),
\qquad k \in \bbN.
\label{ineqn-lambda-v(k)}
\end{equation}
Substituting (\ref{ineqn-lambda-v(k)}) into (\ref{add-eqn-06}), we
have
\begin{equation}
\sum_{l=0}^{\infty} \widetilde{\vc{P}}(k;l)\vc{v}(l)
\le \gamma \vc{v}(k), \qquad k=1,2,\dots,K.
\label{add-eqn-08}
\end{equation}
Similarly, for $k=K+1,K+2,\dots$,
\begin{eqnarray}
\sum_{l=0}^{\infty}
\widetilde{\vc{P}}(k;l)\vc{v}(l)
&\le& \sum_{l=0}^{\infty}\widetilde{\vc{P}}(k;l)\vc{v}'(l) + B\vc{e}
\le \gamma'\vc{v}'(k) + B\vc{e}
\le  \gamma\vc{v}(k),
\label{add-eqn-09}
\end{eqnarray}
where the last inequality is due to (\ref{ineqn-lambda-v(k)}). 
Finally, (\ref{add-eqn-07}),
(\ref{add-eqn-08}) and (\ref{add-eqn-09}) yield (\ref{ineqn-hat{P}v}).
\end{proof}

\section{Applications}\label{sec-applications}

In this section, we discuss the application of our results to
GI/G/1-type Markov chains. To this end, we make the following
assumption.
\begin{assumpt}\label{assumpt-MMRW-light}
(i) $\vc{P}$ is of the following form:
\begin{equation}
\vc{P}
=
\left(
\begin{array}{ccccc}
\vc{B}(0)  & \vc{B}(1)  & \vc{B}(2)  & \vc{B}(3) & \cdots
\\
\vc{B}(-1) & \vc{A}(0)  & \vc{A}(1)  & \vc{A}(2) & \cdots
\\
\vc{B}(-2) & \vc{A}(-1) & \vc{A}(0)  & \vc{A}(1) & \cdots
\\
\vc{B}(-3) & \vc{A}(-2) & \vc{A}(-1) & \vc{A}(0) & \cdots
\\
\vdots     & \vdots     &  \vdots    & \vdots    & \ddots
\end{array}
\right),
\label{GIG1-type-P}
\end{equation}
where $\vc{A}(k)$ and $\vc{B}(k)$ ($k=0,\pm1,\pm2,\dots$) are $d
\times d$ matrices; (ii) $\vc{P} \in \sfBM_d$; (iii) $\vc{P}$ is
irreducible and positive recurrent; (iv)
$\vc{A}:=\sum_{k=-\infty}^{\infty}\vc{A}(k)$ is irreducible and
stochastic; and (v) $r_{A_+} = \sup\{z > 0;
\mbox{$\sum_{k=0}^{\infty}z^k \vc{A}(k)$ is finite}\} > 1$.
\end{assumpt}

It follows from conditions (i), (ii) and (iv) of
Assumption~\ref{assumpt-MMRW-light} and
Proposition~\ref{prop-gamma(i,j)} that $\vc{\varPsi} =
\sum_{l=0}^{\infty}\vc{B}(k) = \vc{B}(-k) +
\sum_{l=-k+1}^{\infty}\vc{A}(l)$ for all $k \in \bbN$, which implies
that $\lim_{k\to\infty}\vc{B}(-k) = \vc{O}$ and thus $\vc{A} =
\vc{\varPsi}$.

Let $\widehat{\vc{A}}(z)$ denote
\begin{equation}
\widehat{\vc{A}}(z)
=
\sum_{k=-\infty}^{\infty}z^k \vc{A}(k),
\qquad z \in (1/r_{A_-},r_{A_+}) \cap \{1\} =: \calI_A,
\label{defn-tilde{A}(z)}
\end{equation}
where $r_{A_-} = \sup\{z > 0; \mbox{$\sum_{k=1}^{\infty}z^k
  \vc{A}(-k)$ is finite}\} \ge 1$. Let $\delta_A(z)$ ($z \in \calI_A$)
denote the real and maximum-modulus eigenvalue of
$\widehat{\vc{A}}(z)$ (see, e.g., Theorems 8.3.1 and 8.4.4 in
\cite{Horn90}). Let $\vc{\mu}_A(z)=(\mu_A(z,i))_{i\in\bbD}$ and
$\vc{v}_A(z)=(v_A(z,i))_{i\in\bbD}$ ($z \in \calI_A$) denote left- and
right-eigenvectors of $\widehat{\vc{A}}(z)$ corresponding to
eigenvalue $\delta_A(z)$, i.e.,
\begin{equation}
\vc{\mu}_A(z)
\widehat{\vc{A}}(z)
= \delta_A(z)\vc{\mu}_A(z),
\quad 
\widehat{\vc{A}}(z)\vc{v}_A(z)
= \delta_A(z)\vc{v}_A(z),
\label{add-eqn-03}
\end{equation}
which are normalized such that $\vc{\mu}_A(z)\vc{v}_A(z) = 1$ and
$\vc{v}_A(z) \ge \vc{e}$ for $z \in \calI_A$. We then have
$\delta_A(z) = \vc{\mu}_A(z)\widehat{\vc{A}}(z)\vc{v}_A(z)$. It also
follows from $\vc{A} = \vc{\varPsi}$ and condition (iv) of
Assumption~\ref{assumpt-MMRW-light} that $\delta_A(1)= 1$,
$\vc{\mu}_A(1) = c \vc{\varpi}$ and $\vc{v}_A(1) = c^{-1} \vc{e}$ for
some $c \in (0,1]$.

\begin{lem}\label{add-lem01}
Under Assumption~\ref{assumpt-MMRW-light}, there exists an $\alpha \in
(1,r_A)$ such that $\delta_A(\alpha) < 1$.
\end{lem}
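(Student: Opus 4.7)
The plan is to examine $\delta_A(z)$ in a right-neighborhood of $z=1$ and show that $\delta_A$ is right-differentiable there with $\delta_A'(1^+)<0$ (possibly $-\infty$). Combined with the already-noted identity $\delta_A(1)=1$, a first-order Taylor expansion then yields $\delta_A(\alpha)<1$ for all $\alpha>1$ sufficiently close to $1$, which proves the lemma.

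First I would verify the regularity of $\widehat{\vc{A}}(z)$ and $\delta_A(z)$ at $z=1$. On the positive side, $\sum_{k\ge 0}z^{k}\vc{A}(k)$ is analytic on $|z|<r_{A_+}$ by Assumption~\ref{assumpt-MMRW-light}~(v). On the negative side, $\sum_{k\ge 1}z^{-k}\vc{A}(-k)$ is termwise dominated by $\vc{A}(-k)$ for $z\ge 1$, whose row sums are bounded by $\vc{A}\vc{e}=\vc{e}$ and hence summable; by dominated convergence $\widehat{\vc{A}}(z)$ is right-continuous at $z=1$ and analytic on $(1,r_{A_+})$. Since $\widehat{\vc{A}}(1)=\vc{A}$ is irreducible and stochastic with simple Perron--Frobenius eigenvalue $1$, analytic perturbation theory transfers this regularity to $\delta_A(z)$, $\vc{\mu}_A(z)$ and $\vc{v}_A(z)$ in a right-neighborhood of $z=1$. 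Next, differentiating the right-eigenvector identity in (\ref{add-eqn-03}), pre-multiplying by $\vc{\mu}_A(z)$, and using the normalization $\vc{\mu}_A(z)\vc{v}_A(z)\equiv 1$, I would obtain the standard perturbation formula $\delta_A'(z)=\vc{\mu}_A(z)\widehat{\vc{A}}'(z)\vc{v}_A(z)$. Evaluating at $z=1$, with $\vc{\mu}_A(1)=c\vc{\varpi}$ and $\vc{v}_A(1)=c^{-1}\vc{e}$ so that the normalizing constant $c$ cancels, this yields
\[
\delta_A'(1^+)\;=\;\vc{\varpi}\sum_{k=-\infty}^{\infty}k\vc{A}(k)\vc{e},
\]
with the right-hand side interpreted as $-\infty$ if $\sum_{k\ge 1}k\vc{A}(-k)\vc{e}$ diverges entrywise.

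The hard part will be to show that this stationary bulk drift is strictly negative. Heuristically, the conditional one-step drift of $X_\nu$ from a state $(k,i)$ with $k$ large equals $(\sum_l l\vc{A}(l)\vc{e})_i$, so averaging over the stationary phase distribution $\vc{\varpi}$ of $\vc{A}=\vc{\varPsi}$ produces $\vc{\varpi}\sum_k k\vc{A}(k)\vc{e}$; a non-negative value would force $X_\nu\to\infty$ or only null-recurrent returns to level $0$, contradicting the assumed positive recurrence of $\vc{P}$. Rigorously, I would invoke the classical characterization of positive recurrence for GI/G/1-type Markov chains with irreducible stochastic $\vc{A}$, according to which positive recurrence is equivalent to $\vc{\varpi}\sum_k k\vc{A}(k)\vc{e}<0$ (the infinite-negative-moment case being subsumed under this strict inequality, since $\delta_A'(1^+)=-\infty$ then trivially suffices). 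Making this step airtight within the present framework, without importing extra moment or aperiodicity assumptions, is the one genuine obstacle; once it is done, Taylor expansion applied to $\delta_A$ at $z=1$ delivers some $\alpha\in(1,r_{A_+})$ with $\delta_A(\alpha)<1$.
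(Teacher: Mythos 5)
Your proposal is correct and follows essentially the same route as the paper: differentiate $\delta_A$ at $z=1$ via the standard eigenvalue perturbation formula to get $\delta_A'(1)=\vc{\varpi}\sum_{k}k\vc{A}(k)\vc{e}$, and conclude strict negativity from the mean-drift characterization of positive recurrence for GI/G/1-type chains (the paper cites Proposition~2.2.1 of \cite{Kimu13} for exactly the step you flag as the obstacle). Your extra care about one-sided differentiability and the $\delta_A'(1^+)=-\infty$ case is a harmless refinement of the same argument.
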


\begin{proof}
Since $\delta_A(1)= 1$ and $\delta_A(z)$ is differentiable for $z \in
\calI_A$ (see Theorem~2.1 in \cite{Andr93}), it suffices to show that
$\delta_A'(1) < 0$. Indeed, $\delta_A'(1) =
\vc{\mu}_A(1)\sum_{k=-\infty}^{\infty}k\vc{A}(k)\vc{v}_A(1) =
\vc{\varpi}\sum_{k=-\infty}^{\infty}k\vc{A}(k)\vc{e}$, which is equal
to the mean drift of the process $\{X_{\nu};\nu\in\bbZ_+\}$ away from
the boundary and is strictly negative under
Assumption~\ref{assumpt-MMRW-light} (see, e.g., Proposition~2.2.1 in
\cite{Kimu13}).
\end{proof}

We now define $\vc{P}(k;l)$ ($k,l\in\bbZ_+$) as a $d \times d$ matrix
such that $\vc{P}(k;l)=(p(k,i;l,j))_{i,j\in\bbD}$. We also fix
$\vc{v}'=(\vc{v}'(0)^{\rmT},\vc{v}'(1)^{\rmT},\dots)^{\rmT}$ such that
\begin{eqnarray}
\vc{v}'(k) 
&=& \alpha^k \vc{v}_A(\alpha),\qquad k \in \bbZ_+,
\label{eqn-v'(k)}
\end{eqnarray}
which leads to $\vc{v}' \in \sfBI_d$. 
It then follows from (\ref{GIG1-type-P}) and (\ref{eqn-v'(k)}) that
\begin{eqnarray}
\sum_{l=0}^{\infty}\vc{P}(0;l)\vc{v}'(l)
&=& \sum_{l=0}^{\infty}\alpha^l\vc{B}(l) \cdot \vc{v}_A(\alpha)
=: \vc{w}(0),
\label{add-eqn-11'}
\\
\sum_{l=0}^{\infty}\vc{P}(k;l)\vc{v}'(l)
&=& \vc{B}(-k) \vc{v}_A(\alpha)
+ \alpha^k \sum_{l=-k+1}^{\infty}\alpha^l\vc{A}(l) \cdot \vc{v}_A(\alpha)
=: \vc{w}(k),
\quad k \in \bbN, \qquad
\label{add-eqn-11}
\end{eqnarray}
where $\vc{w}(0) \le \vc{w}(0) \le \vc{w}(1) \le \cdots$ because
$\vc{P} \in \sfBM_d$ and $\vc{v}' \in \sfBI_d$ (see
Proposition~\ref{prop-2}). Further, using (\ref{defn-tilde{A}(z)}) and
(\ref{add-eqn-03}), we can estimate the right hand side of
(\ref{add-eqn-11}) as follows:
\begin{eqnarray}
\sum_{l=0}^{\infty}\vc{P}(k;l)\vc{v}'(l)
= \vc{w}(k)
&\le& \vc{B}(-k) \vc{v}_A(\alpha) 
+ \alpha^k \widehat{\vc{A}}(\alpha)\vc{v}_A(\alpha)
\nonumber
\\
&=& \vc{B}(-k) \vc{v}_A(\alpha) 
+ \alpha^k \delta_A(\alpha) \vc{v}_A(\alpha) < \infty, \qquad
k \in \bbN,
\label{add-eqn-15}
\end{eqnarray}
which shows that $\vc{w}(k)$ is finite for all $k \in \bbZ_+$.
Combining (\ref{add-eqn-15}), $\lim_{k\to\infty}\vc{B}(-k) = \vc{O}$,
$\vc{v}_A(\alpha) \ge \vc{e}$ and Lemma~\ref{add-lem01}, we can show
that there exist some $\gamma' \in (0,1)$ and $k_{\ast} \in \bbN$ such
that
\begin{equation}
\sum_{l=0}^{\infty}\vc{P}(k;l)\vc{v}'(l)
\le \gamma' \alpha^k \vc{v}_A(\alpha) = \gamma' \vc{v}'(k),
\qquad \forall k \ge k_{\ast},
\label{add-eqn-12}
\end{equation}
where the last equality is due to (\ref{eqn-v'(k)}).

\begin{thm}
Suppose that Assumption~\ref{assumpt-MMRW-light} holds and fix
$\gamma' \in (0,1)$ and $k_{\ast} \in \bbN$ satisfying
(\ref{add-eqn-12}).  Further if $\vc{B}(-K)\vc{e} > \vc{0}$ for some
nonnegative integer $K \ge k_{\ast} - 1$, then the bound
(\ref{geo-bound-2}) holds for $\gamma \in (0,1)$, $b \in (0,\infty)$
and $\vc{v} \in \sfBI_d$ such that (\ref{defn-lambda})--(\ref{defn-B})
are satisfied, where $\vc{v}'$ is given by (\ref{eqn-v'(k)}),
$\widetilde{\vc{P}}(K;0)= \vc{B}(-K)$ and
\begin{eqnarray}
b' &=& \inf\{
x > 0; x\vc{e} \ge \vc{w}(k)- \gamma'\alpha^k \vc{v}_A(\alpha)~
(0 \le \forall  k \le K) 
\}.
\label{eqn-b'}
\end{eqnarray}
\end{thm}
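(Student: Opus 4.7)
The plan is to apply Theorem~\ref{thm-extended-geo2} with $\widetilde{\vc{P}}$ taken to be $\vc{P}$ itself. Under Assumption~\ref{assumpt-MMRW-light}, conditions (i) and (ii) of that theorem are immediate: $\vc{P} \in \sfBM_d$ is irreducible (and positive recurrent), and $\vc{P} \prec_d \vc{P}$ holds trivially. Because $\widetilde{\vc{P}}(K;0) = \vc{P}(K;0) = \vc{B}(-K)$, the hypothesis $\vc{B}(-K)\vc{e} > \vc{0}$ is exactly condition (\ref{ineqn-hat{P}(K,0)e}). The entire burden therefore reduces to verifying the geometric drift inequality (\ref{ineqn-hat{P}v'}) for $\vc{v}'$ defined by (\ref{eqn-v'(k)}) with the constants $\gamma'$ and $b'$ named in the statement.

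I would first dispose of the structural requirements on $\vc{v}'$. Since $\alpha > 1$ and $\vc{v}_A(\alpha) \ge \vc{e}$, the formula $\vc{v}'(k) = \alpha^k \vc{v}_A(\alpha)$ gives both $\vc{v}' \ge \vc{e}$ and $\vc{v}'(k) \le \vc{v}'(k+1)$, so $\vc{v}' \in \sfBI_d$. By (\ref{add-eqn-11'}) and (\ref{add-eqn-11}), the left-hand side of (\ref{ineqn-hat{P}v'}) at level $k$ is precisely $\vc{w}(k)$, so the drift condition amounts to $\vc{w}(k) \le \gamma'\vc{v}'(k) + b'\vc{e}$ for $0 \le k \le K$ and $\vc{w}(k) \le \gamma'\vc{v}'(k)$ for $k > K$.

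The verification then splits at level $K$. For $k \ge K+1$ the hypothesis $K \ge k_{\ast}-1$ forces $k \ge k_{\ast}$, so (\ref{add-eqn-12}) yields exactly $\vc{w}(k) \le \gamma' \vc{v}'(k)$. For $0 \le k \le K$ the required inequality is literally what the defining infimum (\ref{eqn-b'}) produces, so it holds by construction.

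The only delicate point is checking that $b'$ is a legitimate finite positive constant. Finiteness of each $\vc{w}(k) - \gamma'\alpha^k \vc{v}_A(\alpha)$ on $0 \le k \le K$ is immediate from (\ref{add-eqn-15}), which bounds $\vc{w}(k)$ componentwise by the finite vector $\vc{B}(-k)\vc{v}_A(\alpha) + \alpha^k \delta_A(\alpha)\vc{v}_A(\alpha)$; positivity of $b'$ may be guaranteed, if it is not already forced at some $k \le K$, by simply enlarging the infimum to any positive number, which only strengthens the drift inequality. With all hypotheses of Theorem~\ref{thm-extended-geo2} in place, the bound (\ref{geo-bound-2}) follows directly.
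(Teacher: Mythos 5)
Your proposal is correct and follows essentially the same route as the paper: fix $\widetilde{\vc{P}} = \vc{P}$, observe that the drift inequality (\ref{ineqn-hat{P}v'}) for $0 \le k \le K$ is exactly what the definition (\ref{eqn-b'}) of $b'$ delivers and for $k \ge K+1 \ge k_{\ast}$ is (\ref{add-eqn-12}), check (\ref{ineqn-hat{P}(K,0)e}) via $\vc{B}(-K)\vc{e} > \vc{0}$, and invoke Theorem~\ref{thm-extended-geo2}. Your additional remarks on $\vc{v}' \in \sfBI_d$ and on the finiteness and positivity of $b'$ are sound housekeeping that the paper leaves implicit.
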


\begin{proof}
Fix $\widetilde{\vc{P}} = \vc{P} \in \sfBM_d$.  From
(\ref{eqn-v'(k)})--(\ref{add-eqn-11}) and
(\ref{eqn-b'}), we then have
\[
\sum_{l=0}^{\infty}\widetilde{\vc{P}}(k;l)\vc{v}'(l)
= \gamma'\vc{v}'(k) + \{ \vc{w}(k) - \gamma'\alpha^k \vc{v}_A(\alpha) \}
\le \gamma'\vc{v}'(k) + b'\vc{e},~~~k=0,1,\dots,K.
\]
This inequality and (\ref{add-eqn-12}) yield
(\ref{ineqn-hat{P}v'}). Further (\ref{ineqn-hat{P}(K,0)e}) holds
because $\widetilde{\vc{P}}(K;0)\vc{e}= \vc{B}(-K)\vc{e} > \vc{0}$. As
a result, all the conditions of Theorem~\ref{thm-extended-geo2} are
satisfied and thus the bound (\ref{geo-bound-2}) holds.
\end{proof}

Finally, we consider a special case where $\vc{B}(-k) = \vc{A}(-k) =
\vc{O}$ for $k \ge 2$, $\vc{B}(-1) = \vc{A}(-1)$ and $\vc{B}(k) =
\vc{A}(k-1)$ for $k \in \bbZ_+$, i.e.,
\begin{equation}
\vc{P}
=
\left(
\begin{array}{ccccc}
\vc{A}(-1) & \vc{A}(0) & \vc{A}(1) & \vc{A}(2) & \cdots
\\
\vc{A}(-1) & \vc{A}(0) & \vc{A}(1) & \vc{A}(2) & \cdots
\\
\vc{O} & \vc{A}(-1) & \vc{A}(0) & \vc{A}(1) & \cdots
\\
\vc{O} & \vc{O} & \vc{A}(-1) & \vc{A}(0) & \cdots
\\
\vdots & \vdots & \vdots & \vdots & \ddots
\end{array}
\right),
\label{MG1-type-P}
\end{equation}
which is block-monotone with block size $d$.  Note that $\vc{P}$ in
(\ref{MG1-type-P}) is an M/G/1-type transition probability matrix and
appears in the analysis of the stationary queue length distribution in
the BMAP/GI/1 queue (see \cite{Taki00}).
\begin{thm}
Suppose that Assumption~\ref{assumpt-MMRW-light} holds. Further if
$\vc{B}(-k) = \vc{A}(-k) = \vc{O}$ for $k \ge 2$, $\vc{B}(-1) =
\vc{A}(-1)$ and $\vc{B}(k) = \vc{A}(k-1)$ for $k \in \bbZ_+$, then the
bound (\ref{geo-bound-2}) holds for $\gamma = \delta_A(\alpha)$,
$b=(\alpha-1)\max_{i\in\bbD}v_A(\alpha,i)$ and $\vc{v}=\vc{v}'$ given
in (\ref{eqn-v'(k)}).
\end{thm}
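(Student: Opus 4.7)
The plan is to verify the geometric drift condition (Assumption~\ref{assumpt-geo}) directly with the proposed $\gamma$, $b$, and $\vc{v}=\vc{v}'$, and then invoke Theorem~\ref{thm-main-geo}. Since $\vc{P} \in \sfBM_d$ and is irreducible by Assumption~\ref{assumpt-MMRW-light}, no appeal to Theorem~\ref{thm-extended-geo} or \ref{thm-extended-geo2} is needed; the whole task reduces to checking $\vc{P}\vc{v} \le \gamma\vc{v} + b\dd{1}_0$ together with $\vc{v} \in \sfBI_d$ and $\vc{v} \ge \vc{e}$. The last two properties are immediate from $\vc{v}(k) = \alpha^k \vc{v}_A(\alpha)$, $\alpha > 1$, and the normalization $\vc{v}_A(\alpha) \ge \vc{e}$.

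Next I would decompose $\widehat{\vc{A}}(\alpha)$ to isolate the negative-index contribution. Because $\vc{A}(-k)=\vc{O}$ for $k\ge 2$, we have
\[
\sum_{l=0}^{\infty}\alpha^l \vc{A}(l)\vc{v}_A(\alpha)
= \widehat{\vc{A}}(\alpha)\vc{v}_A(\alpha) - \alpha^{-1}\vc{A}(-1)\vc{v}_A(\alpha)
= \delta_A(\alpha)\vc{v}_A(\alpha) - \alpha^{-1}\vc{A}(-1)\vc{v}_A(\alpha),
\]
using (\ref{add-eqn-03}). For $k \ge 1$, the $k$th block row of $\vc{P}$ in (\ref{MG1-type-P}) starts with $\vc{A}(-1)$ at column $k-1$, so
\[
\sum_{l=0}^{\infty}\vc{P}(k;l)\vc{v}(l)
= \alpha^{k-1}\vc{A}(-1)\vc{v}_A(\alpha) + \alpha^k\left[\delta_A(\alpha)\vc{v}_A(\alpha) - \alpha^{-1}\vc{A}(-1)\vc{v}_A(\alpha)\right]
= \delta_A(\alpha)\,\vc{v}(k).
\]
Thus the drift is exact (with equality) off the boundary and equals $\gamma \vc{v}(k)$.

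It remains to handle the boundary row $k=0$, where the first block column is $\vc{A}(-1)$ rather than being absent. A parallel computation gives
\[
\sum_{l=0}^{\infty}\vc{P}(0;l)\vc{v}(l)
= \vc{A}(-1)\vc{v}_A(\alpha) + \alpha\left[\delta_A(\alpha)\vc{v}_A(\alpha) - \alpha^{-1}\vc{A}(-1)\vc{v}_A(\alpha)\right]
= \alpha\,\delta_A(\alpha)\vc{v}(0).
\]
Hence I need $(\alpha-1)\delta_A(\alpha)\vc{v}_A(\alpha) \le b\,\vc{e}$, which follows coordinate-wise from $\delta_A(\alpha) < 1$ (Lemma~\ref{add-lem01}) and $b = (\alpha-1)\max_{i\in\bbD}v_A(\alpha,i)$. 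Combining the two cases yields $\vc{P}\vc{v} \le \gamma\vc{v} + b\dd{1}_0$ with $\gamma = \delta_A(\alpha) \in (0,1)$.

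With Assumption~\ref{assumpt-geo} verified, Theorem~\ref{thm-main-geo} delivers (\ref{geo-bound-2}). The only subtle point is the boundary estimate: the excess factor $\alpha$ in front of $\delta_A(\alpha)$ at level zero is absorbed by $b$, which is why $b$ is chosen proportional to $\alpha-1$. I expect this boundary calculation to be the main (though still routine) obstacle; everything else is algebraic manipulation combined with the eigen-equation for $\widehat{\vc{A}}(\alpha)$.
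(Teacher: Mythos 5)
Your proposal is correct and takes essentially the same route as the paper: both verify the drift condition $\vc{P}\vc{v}'\le\delta_A(\alpha)\vc{v}'+b\dd{1}_0$ directly from the eigen-relation $\widehat{\vc{A}}(\alpha)\vc{v}_A(\alpha)=\delta_A(\alpha)\vc{v}_A(\alpha)$ (the paper by specializing its earlier identities (\ref{add-eqn-11'})--(\ref{add-eqn-11}), you by isolating the $\vc{A}(-1)$ term), obtaining equality $\delta_A(\alpha)\vc{v}(k)$ off the boundary and $\alpha\delta_A(\alpha)\vc{v}_A(\alpha)$ at level zero, and then invoke Theorem~\ref{thm-main-geo}. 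Your explicit boundary estimate $(\alpha-1)\delta_A(\alpha)\vc{v}_A(\alpha)\le b\vc{e}$ is, if anything, slightly cleaner than the paper's intermediate display.
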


\begin{proof}
Fixing $\vc{v}=\vc{v}'$ and applying
(\ref{defn-tilde{A}(z)})--(\ref{eqn-v'(k)}), Lemma~\ref{add-lem01} and
the conditions on $\{\vc{B}(k)\}$ to (\ref{add-eqn-11'}) and
(\ref{add-eqn-11}), we obtain
\begin{eqnarray*}
\sum_{l=0}^{\infty}\vc{P}(0;l)\vc{v}(l)
&=& \alpha \delta_A(\alpha)\vc{v}_A(\alpha)
\le \vc{v}(0) + (\alpha - 1)\vc{v}_A(\alpha),
\\
\sum_{l=0}^{\infty}\vc{P}(k;l)\vc{v}(l)
&=& \alpha^k \delta_A(\alpha)\vc{v}_A(\alpha)
= \delta_A(\alpha)\vc{v}(k),
\quad k \in \bbN, \qquad
\end{eqnarray*}
which imply that all the conditions of Theorem~\ref{thm-main-geo}
hold. Thus we have (\ref{geo-bound-2}).
\end{proof}

\appendix

\section{Pathwise ordering}\label{sec-pathwise-ordering}

This section presents two lemmas on the pathwise ordering associated
with BMMCs. As in the previous sections, we use
$\vc{P}=(p(k,i;l,j))_{(k,i),(l,j)\in\bbF}$ and
$\widetilde{\vc{P}}=(\widetilde{p}(k,i;l,j))_{(k,i),(l,j)\in\bbF}$ to
represent $|\bbF| \times |\bbF|$ stochastic matrices, though they are
not necessarily assumed to be irreducible or recurrent in this
section.

Let $\{U_{\nu};\nu\in\bbN\}$ and $\{S_{\nu};\nu\in\bbN\}$ denote two
independent sequences of independent and identically distributed
(i.i.d.)\ random variables on a probability space $(\Omega,\calF,\PP)$
such that $U_{\nu}$ and $S_{\nu}$ are uniformly distributed in
$(0,1)$. Let $J_0^{\ast}$ denote a $\bbD$-valued random variable on
the probability space $(\Omega,\calF,\PP)$, which is independent of
both $\{U_{\nu};\nu\in\bbN\}$ and $\{S_{\nu};\nu\in\bbN\}$. Further
let $J_{\nu}^{\ast} = G^{-1}(S_{\nu} \mid J_{\nu-1}^{\ast})$ for
$\nu\in\bbN$, where
\[
G^{-1}(s \mid i) 
= \inf\left\{
j \in\bbD; \sum_{j'=1}^j \psi(i,j')  \ge s
\right\},\qquad  0 < s < 1,~i\in\bbD.
\]
It then follows that $\{J^*_{\nu};\nu\in\bbZ_+\}$ is a $\bbD$-valued
Markov chain on the probability space $(\Omega,\calF,\PP)$ such that
$\PP(J^*_{\nu+1} = j \mid J^*_{\nu} = i) = \psi(i,j)$ for $i,j\in\bbD$
and $\nu \in \bbZ_+$, where $\psi(i,j)$ is defined in
Proposition~\ref{prop-gamma(i,j)}.

\begin{lem}[Pathwise ordered property of BMMCs]\label{lem1-discrete-ordering}
Suppose $\vc{P} \in \sfBM_d$. Let $X'_0$ and $X''_0$ denote
nonnegative integer-valued random variables on the probability space
$(\Omega,\calF,\PP)$, which are independent of both
$\{U_{\nu};\nu\in\bbN\}$ and $\{S_{\nu};\nu\in\bbN\}$. Further let
$X'_{\nu} = F^{-1}(U_{\nu} \mid X'_{\nu-1},J^*_{\nu-1},J^*_{\nu})$ and
$X''_{\nu} = F^{-1}(U_{\nu} \mid X''_{\nu-1},J^*_{\nu-1},J^*_{\nu})$
for $\nu\in\bbN$, where $F^{-1}(u \mid k,i,j)$ ($0<u<1$,
$k\in\bbZ_+,i,j\in\bbD$) is defined as
\begin{equation}
F^{-1}(u \mid k,i,j) 
= \inf\left\{
l \in\bbZ_+; \sum_{m=0}^l {p(k,i;m,j) \over \psi(i,j)}\ge u
\right\}.
\label{defn-F^{-1}}
\end{equation}
Under these conditions, $\{(X'_{\nu},J^*_{\nu});\nu\in\bbZ_+\}$ and
$\{(X''_{\nu},J^*_{\nu});\nu\in\bbZ_+\}$ are Markov chains with
transition probability matrix $\vc{P}$ on the probability space
$(\Omega,\calF,\PP)$ such that $X'_{\nu} \le X''_{\nu}$ for all $\nu
\in \bbN$ if $X'_0 \le X''_0$.
\end{lem}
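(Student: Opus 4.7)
The plan is to verify two things in sequence: (i) $\{(X'_\nu,J^*_\nu);\nu\in\bbZ_+\}$ and $\{(X''_\nu,J^*_\nu);\nu\in\bbZ_+\}$ are Markov chains on $(\Omega,\calF,\PP)$ with transition probability matrix $\vc{P}$; and (ii) the deterministic coupling through the common innovations $\{U_\nu\}$, $\{S_\nu\}$ and the common initial phase $J_0^*$ propagates the initial ordering $X'_0\le X''_0$ to all $\nu\in\bbN$. Block-monotonicity is used only for step (ii), and it enters exactly as the monotonicity of the inverse CDF in (\ref{defn-F^{-1}}) with respect to its conditioning level.

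For step (i), I first recall from Proposition~\ref{prop-gamma(i,j)} that $\psi(i,j)=\sum_{m\in\bbZ_+}p(k,i;m,j)$ is constant in $k$, so with the convention $0/0=0$ the normalized probabilities $p(k,i;m,j)/\psi(i,j)$ in (\ref{defn-F^{-1}}) form a genuine distribution in $m$ whenever $\psi(i,j)>0$ (the complementary case occurs with zero probability and can be ignored). A standard inverse-transform argument, using that $U_\nu$ is uniform on $(0,1)$ and independent of $(X'_{\nu-1},J_{\nu-1}^*,J_\nu^*)$, then yields
\[
\PP\!\left(X'_\nu=l \mid X'_{\nu-1}=k,\,J_{\nu-1}^*=i,\,J_\nu^*=j\right)
= \frac{p(k,i;l,j)}{\psi(i,j)}.
\]
Multiplying by $\PP(J_\nu^*=j\mid J_{\nu-1}^*=i)=\psi(i,j)$ recovers the one-step joint transition probability $p(k,i;l,j)$. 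The Markov property itself is immediate from the independence of $(U_\nu,S_\nu)$ from the history $\sigma((X'_{\nu'},J_{\nu'}^*);\nu'<\nu)$, and the same reasoning applies verbatim to the double-primed chain.

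For step (ii), I exploit $\vc{P}\in \sfBM_d$ in the form
\[
\sum_{m=l+1}^{\infty}p(k,i;m,j)\le \sum_{m=l+1}^{\infty}p(k+1,i;m,j),\qquad k,l\in\bbZ_+,~i,j\in\bbD,
\]
from Definition~\ref{defn-BM}. Subtracting from $\psi(i,j)$ and dividing through by $\psi(i,j)$ yields
\[
\sum_{m=0}^{l}\frac{p(k,i;m,j)}{\psi(i,j)}\ge \sum_{m=0}^{l}\frac{p(k+1,i;m,j)}{\psi(i,j)},
\]
so the conditional CDF defining $F^{-1}(u\mid k,i,j)$ is pointwise non-increasing in $k$. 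Its generalized inverse is therefore non-decreasing in $k$, i.e.\ $F^{-1}(u\mid k,i,j)\le F^{-1}(u\mid k+1,i,j)$ for every $u\in(0,1)$ and $(i,j)\in\bbD^2$. A one-line induction on $\nu$, driving both chains with the same $U_\nu$ and the same pair $(J_{\nu-1}^*,J_\nu^*)$, then gives $X'_\nu\le X''_\nu$ whenever $X'_{\nu-1}\le X''_{\nu-1}$.

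There is no deep obstacle here: block-monotonicity is designed precisely to make such a coupling available, and the only care points are the harmless division by $\psi(i,j)$ (which vanishes exactly where the phase transition $i\to j$ has probability zero, so $J_\nu^*=j$ never occurs with $J_{\nu-1}^*=i$ in the first place) and the correct bookkeeping of the conditional independence between the innovations $(U_\nu,S_\nu)$ and the history throughout the induction.
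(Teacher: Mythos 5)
Your proposal is correct and follows essentially the same route as the paper's proof: the ordering is obtained by observing that block-monotonicity makes the conditional CDF in (\ref{defn-F^{-1}}) non-increasing in the level argument, hence its generalized inverse non-decreasing, and the transition law is recovered by the inverse-transform argument together with the independence of the innovations $(U_\nu,S_\nu)$ from the history (which the paper spells out via the filtration $\calG_{\nu-1}$). The only cosmetic difference is the order of the two steps and your explicit remark about the harmless case $\psi(i,j)=0$, which the paper leaves implicit.
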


\begin{proof}
Suppose that $X'_{\nu} \le X''_{\nu}$ for some $\nu \in
\bbZ_+$.  It then follows from $\vc{P} \in \sfBM_d$ that
\[
\sum_{m=0}^l p(X'_{\nu},J^*_{\nu};m,J^*_{\nu+1}) 
\ge \sum_{m=0}^l p(X''_{\nu},J^*_{\nu};m,J^*_{\nu+1}),
\qquad l\in\bbZ_+.
\]
Thus from the definition of $\{X'_{\nu}\}$ and $\{X''_{\nu}\}$, we
have
\begin{eqnarray*}
X''_{\nu+1} 
&=& \inf\left\{
l \in\bbZ_+; \sum_{m=0}^l 
{ p(X''_{\nu},J^*_{\nu};m,J^*_{\nu+1}) \over \psi(J^*_{\nu},J^*_{\nu+1}) } 
\ge U_{\nu+1}
\right\}
\nonumber
\\
&\ge& \inf\left\{
l \in\bbZ_+; \sum_{m=0}^l 
{ p(X'_{\nu},J^*_{\nu};m,J^*_{\nu+1}) \over \psi(J^*_{\nu},J^*_{\nu+1}) } 
\ge U_{\nu+1}
\right\}
\nonumber
\\
&=& F^{-1}(U_{\nu+1} \mid X'_{\nu},J^*_{\nu},J^*_{\nu+1}) = X'_{\nu+1}.
\end{eqnarray*}
Therefore it is proved by induction that $X'_{\nu} \le X''_{\nu}$ for
all $\nu \in \bbN$.

Next we prove that the dynamics of
$\{(X'_{\nu},J^*_{\nu});\nu\in\bbZ_+\}$ is determined by $\vc{P}$. Let
$\sigma(\,\cdot\,)$ denote the sigma-algebra generated by the random
variables in the parentheses. From the definition of
$\{(X'_{\nu},J^*_{\nu})\}$, we then have for $\nu \in \bbN$,
\begin{eqnarray}
\lefteqn{
\sigma(X'_0,X'_1,\dots,X'_{\nu-1},J_0^{\ast},J_1^{\ast},\dots,J_{\nu-1}^{\ast})
}
\quad &&
\nonumber
\\
&& {} \subseteq 
\sigma(X'_0, J^*_0, U_1,U_2,\dots,U_{\nu-1},S_1,S_2,\dots,S_{\nu-1})
=:\calG_{\nu-1}.
\end{eqnarray}
Note here that for $(k,i) \in \bbF$ and $j\in\bbD$,
\[
\calG_{\nu-1} \cap 
\{X'_{\nu}=k, J^*_{\nu}=i, J^*_{\nu+1}=j\}
\subseteq 
\sigma(X'_0, J^*_0,U_1,U_2,\dots,U_{\nu},S_1,S_2,\dots,S_{\nu+1}),
\]
which implies that $U_{\nu+1}$ is independent of both $\calG_{\nu-1}$
and $\{X'_{\nu}=k, J^*_{\nu}=i, J^*_{\nu+1}=j\}$ for $(k,i) \in \bbF$
and $j\in\bbD$.  Thus it follows from the definition of $\{X'_{\nu}\}$
that
\begin{eqnarray}
\lefteqn{
\PP(X'_{\nu+1} \le l \mid \calG_{\nu-1}, X'_{\nu}=k, J^*_{\nu}=i, J^*_{\nu+1}=j)
}
\qquad &&
\nonumber
\\
&=& \PP\left( \left.
\sum_{m=0}^l {p(k,i;m,j) \over \psi(i,j)} \ge U_{\nu+1}\,
\right| \calG_{\nu-1}, X'_{\nu}=k, J^*_{\nu}=i, J^*_{\nu+1}=j 
\right)
\nonumber
\\
&=& \PP\left(
\sum_{m=0}^l {p(k,i;m,j) \over \psi(i,j)} \ge U_{\nu+1}
\right)
= \sum_{m=0}^l {p(k,i;m,j) \over \psi(i,j)},
\quad (k,i) \times (l,j) \in~\bbF^2. \qquad~
\label{add-eqn-20}
\end{eqnarray}
Note also that $S_{\nu+1}$ is independent of $\calG_{\nu} \supseteq
\calG_{\nu-1} \cap \{X'_{\nu}=k, J^*_{\nu}=i\}$ for $(k,i) \in
\bbF$. Therefore from the definition of $\{J^*_{\nu}\}$, we have for
$(k,i) \in \bbF$ and $j\in\bbD$,
\begin{eqnarray}
\lefteqn{
\PP(J^*_{\nu+1} = j \mid \calG_{\nu-1}, X'_{\nu}=k, J^*_{\nu}=i) 
}
\qquad &&
\nonumber
\\
&& {}
= \PP\left(
\left.
\sum_{j'=1}^{j-1} \psi(i,j') < S_{\nu+1} \le \sum_{j'=1}^{j} \psi(i,j') 
\,\right|
\calG_{\nu-1}, X'_{\nu}=k, J^*_{\nu}=i
\right)
\nonumber
\\
&& {}
= \PP\left(
\sum_{j'=1}^{j-1} \psi(i,j') < S_{\nu+1} \le \sum_{j'=1}^{j} \psi(i,j')
\right)
= \psi(i,j).
\label{add-eqn-21}
\end{eqnarray}
Combining (\ref{add-eqn-20}) and (\ref{add-eqn-21}) yields
\begin{eqnarray*}
\lefteqn{
\PP(X'_{\nu+1} \le l, J^*_{\nu+1}=j \mid \calG_{\nu-1}, X'_{\nu}=k, J^*_{\nu}=i)
}
\quad &&
\nonumber
\\
&=& \PP(X'_{\nu+1} \le l \mid \calG_{\nu-1}, X'_{\nu}=k, J^*_{\nu}=i, J^*_{\nu+1}=j)
\PP(J^*_{\nu+1}=j \mid \calG_{\nu-1}, X'_{\nu}=k, J^*_{\nu}=i)
\nonumber
\\
&=& \sum_{m=0}^l p(k,i;m,j),
\qquad (k,i) \times (l,j) \in \bbF^2,
\end{eqnarray*}
which shows that $\{(X'_{\nu},J^*_{\nu});\nu\in\bbZ_+\}$ is a Markov
chain with transition probability matrix $\vc{P}$ on the probability
space $(\Omega,\calF,\PP)$. The same argument holds for
$\{(X''_{\nu},J^*_{\nu});\nu\in\bbZ_+\}$. We omit the details.
\end{proof}

\begin{lem}[Pathwise ordering by the block-wise dominance]
\label{lem2-discrete-ordering}
Suppose $\vc{P} \prec_{d} \widetilde{\vc{P}}$ and either $\vc{P} \in
\sfBM_d$ or $\widetilde{\vc{P}} \in \sfBM_d$. Let $X_0^{\ast}$ and
$\widetilde{X}_0^{\ast}$ denote nonnegative integer-valued random
variables on the probability space $(\Omega,\calF,\PP)$, which are
independent of both $\{U_{\nu};\nu\in\bbN\}$ and
$\{S_{\nu};\nu\in\bbN\}$. Further let $X_{\nu}^{\ast} = F^{-1}(U_{\nu}
\mid X_{\nu-1}^{\ast},J^*_{\nu-1},J^*_{\nu})$ and
$\widetilde{X}_{\nu}^{\ast} = \widetilde{F}^{-1}(U_{\nu} \mid
\widetilde{X}_{\nu-1}^{\ast},J^*_{\nu-1},J^*_{\nu})$ for $\nu \in
\bbN$, where $F^{-1}(u \mid k,i,j)$ ($0<u<1$, $k\in\bbZ_+,i,j\in\bbD$)
is defined in (\ref{defn-F^{-1}}) and $\widetilde{F}^{-1}(u \mid
k,i,j)$ ($0<u<1$, $k\in\bbZ_+,i,j\in\bbD$) is defined as
\[
\widetilde{F}^{-1}(u \mid k,i,j) 
= \inf\left\{
l \in\bbZ_+; \sum_{m=0}^l {\widetilde{p}(k,i;m,j) \over \psi(i,j)}\ge u
\right\}.
\]
Under these conditions,
$\{(X_{\nu}^{\ast},J_{\nu}^{\ast});\nu\in\bbZ_+\}$ and
$\{(\widetilde{X}_{\nu}^{\ast},J_{\nu}^{\ast});\nu\in\bbZ_+\}$ are
Markov chains with transition probability matrices $\vc{P}$ and
$\widetilde{\vc{P}}$, respectively, on the probability space
$(\Omega,\calF,\PP)$ such that $X_{\nu}^{\ast} \le
\widetilde{X}_{\nu}^{\ast}$ for all $\nu \in \bbN$ if $X_0^{\ast} \le
\widetilde{X}_0^{\ast}$.
\end{lem}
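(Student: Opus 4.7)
The plan is to follow the same two-step recipe used in the proof of Lemma~\ref{lem1-discrete-ordering}, adapted to two different transition kernels rather than one. First I would verify the pathwise ordering $X_{\nu}^{\ast}\le \widetilde{X}_{\nu}^{\ast}$ by induction on $\nu$; second, I would check that $\{(X_{\nu}^{\ast},J_{\nu}^{\ast})\}$ and $\{(\widetilde{X}_{\nu}^{\ast},J_{\nu}^{\ast})\}$ are Markov chains with transition matrices $\vc{P}$ and $\widetilde{\vc{P}}$, respectively, on the common space $(\Omega,\calF,\PP)$.

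The initial observation is that Proposition~\ref{prop-4}~(a) applies under the hypotheses, so $\sum_{l\in\bbZ_+}p(k,i;l,j)=\sum_{l\in\bbZ_+}\widetilde{p}(k,i;l,j) =:\psi(i,j)$, independently of $k$. This guarantees that the same $\psi(i,j)$ sits in the denominators of both $F^{-1}$ and $\widetilde{F}^{-1}$, so the shared driving variables $\{U_{\nu}\}$ and $\{S_{\nu}\}$ are used consistently. It also means that $\vc{P}\prec_d\widetilde{\vc{P}}$, which a priori reads $\sum_{m=l}^{\infty}p(k,i;m,j)\le\sum_{m=l}^{\infty}\widetilde{p}(k,i;m,j)$, is equivalent to the forward cumulative inequality
\[
\sum_{m=0}^l p(k,i;m,j) \ge \sum_{m=0}^l \widetilde{p}(k,i;m,j),\qquad l\in\bbZ_+,~i,j\in\bbD.
\]

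The heart of the induction step is showing that if $X_{\nu}^{\ast}\le\widetilde{X}_{\nu}^{\ast}$ then for every $l\in\bbZ_+$
\[
\sum_{m=0}^l p(X_{\nu}^{\ast},J_{\nu}^{\ast};m,J_{\nu+1}^{\ast})
\;\ge\;
\sum_{m=0}^l \widetilde{p}(\widetilde{X}_{\nu}^{\ast},J_{\nu}^{\ast};m,J_{\nu+1}^{\ast}).
\]
In the case $\vc{P}\in\sfBM_d$, I would chain $\sum_{m=0}^l p(X_{\nu}^{\ast},\cdot;m,\cdot) \ge \sum_{m=0}^l p(\widetilde{X}_{\nu}^{\ast},\cdot;m,\cdot)$ (block-monotonicity of $\vc{P}$) with $\sum_{m=0}^l p(\widetilde{X}_{\nu}^{\ast},\cdot;m,\cdot) \ge \sum_{m=0}^l \widetilde{p}(\widetilde{X}_{\nu}^{\ast},\cdot;m,\cdot)$ (block-wise dominance). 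The case $\widetilde{\vc{P}}\in\sfBM_d$ is symmetric: first apply dominance at $k=X_{\nu}^{\ast}$ and then block-monotonicity of $\widetilde{\vc{P}}$ to pass from $X_{\nu}^{\ast}$ to $\widetilde{X}_{\nu}^{\ast}$. Once the cumulative inequality above is in hand, the monotonicity of the generalized inverses
\[
F^{-1}(U_{\nu+1}\mid X_{\nu}^{\ast},J_{\nu}^{\ast},J_{\nu+1}^{\ast})
\le
\widetilde{F}^{-1}(U_{\nu+1}\mid \widetilde{X}_{\nu}^{\ast},J_{\nu}^{\ast},J_{\nu+1}^{\ast})
\]
is immediate and closes the induction.

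For the Markov-chain verification, I would repeat verbatim the conditional-independence argument in the proof of Lemma~\ref{lem1-discrete-ordering}. Defining $\calG_{\nu-1}=\sigma(X_0^{\ast},J_0^{\ast},U_1,\dots,U_{\nu-1},S_1,\dots,S_{\nu-1})$, the independence of $U_{\nu+1}$ from $\calG_{\nu-1}\cap\{X_{\nu}^{\ast}=k,J_{\nu}^{\ast}=i,J_{\nu+1}^{\ast}=j\}$ gives $\PP(X_{\nu+1}^{\ast}\le l\mid\cdots)=\sum_{m=0}^l p(k,i;m,j)/\psi(i,j)$, and independence of $S_{\nu+1}$ from $\calG_{\nu}$ gives $\PP(J_{\nu+1}^{\ast}=j\mid\cdots)=\psi(i,j)$; multiplying yields the correct one-step transition $\sum_{m=0}^l p(k,i;m,j)$. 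The identical argument with $\widetilde{F}^{-1}$ and $\widetilde{p}$ handles the tilded chain. The only place where genuine care is needed is the very first step above — the chaining that delivers the cumulative inequality — since one must correctly exploit exactly whichever of $\vc{P}$ or $\widetilde{\vc{P}}$ is block-monotone to pivot between the unequal starting levels; the rest is bookkeeping inherited from Lemma~\ref{lem1-discrete-ordering}.
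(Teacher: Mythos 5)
Your proposal is correct and is essentially the argument the paper intends: the paper's own proof invokes Proposition~\ref{prop-4}~(a) to identify the common normalizer $\psi(i,j)$ and then defers to the proof of Lemma~\ref{lem1-discrete-ordering}, omitting exactly the details you supply (the two-step chaining of block-monotonicity with block-wise dominance in whichever order the hypothesis permits, followed by monotonicity of the generalized inverses and the verbatim conditional-independence check). No gaps; your write-up is simply a more explicit version of the same route.
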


\begin{proof}
Proposition \ref{prop-4}~(a) shows that for all $k \in \bbZ_+$ and
$i,j \in \bbD$,
\[
\psi(i,j)
=
\sum_{l=0}^{\infty}p(k,i;l,j) 
= \sum_{l=0}^{\infty}\widetilde{p}(k,i;l,j).
\]
Therefore, following the proof of Lemma~\ref{lem1-discrete-ordering},
we can prove that $\{(X_{\nu}^{\ast},J_{\nu}^{\ast});\nu\in\bbZ_+\}$
and $\{(\widetilde{X}_{\nu}^{\ast},J_{\nu}^{\ast});\nu\in\bbZ_+\}$ are
Markov chains with transition probability matrices $\vc{P}$ and
$\widetilde{\vc{P}}$, respectively, on the probability space
$(\Omega,\calF,\PP)$. Similarly we can prove by induction that if
$X_0^{\ast} \le \widetilde{X}_0^{\ast}$, then $X_{\nu}^{\ast} \le
\widetilde{X}_{\nu}^{\ast}$ for all $\nu \in \bbN$. We omit the details.
\end{proof}

\section*{Acknowledgments}
The author thanks an anonymous referee for his/her constructive
comments and suggestions on improving the presentation of this paper.
Research of the author was supported in part by Grant-in-Aid for Young
Scientists (B) of Japan Society for the Promotion of Science under
Grant No.~24710165.

\end{document}